\documentclass[11pt]{article}

\usepackage{mathpazo}   
\usepackage{comment}
\usepackage[title]{appendix}
\usepackage{wrapfig}
\usepackage{subcaption}
\usepackage{tikz}
\usepackage{tikzscale}
\usepackage{pgfplots}

\usepackage{pgfplots}
\pgfplotsset{compat=newest}
\usepgfplotslibrary{groupplots}
\usepgfplotslibrary{patchplots}
\usepgfplotslibrary{fillbetween}
\usetikzlibrary{arrows.meta}
\usetikzlibrary{backgrounds}

\usepackage[font=footnotesize]{caption}
\usepackage{nameref}
\usepackage{mathtools}
\usepackage{empheq}
\usepackage{comment}
\usepackage[shortlabels,inline]{enumitem}
\setlist[enumerate]{nosep}
\usepackage[colorlinks=true,
linkcolor=refkey,
urlcolor=lblue,
citecolor=red]{hyperref}
\usepackage[doc,wmm,hhb]{optional}
\usepackage{xcolor}

\usepackage{float}
\usepackage{soul}
\usepackage{graphicx}
\definecolor{labelkey}{rgb}{0,0.08,0.45}
\definecolor{refkey}{rgb}{0,0.6,0.0}
\definecolor{Brown}{rgb}{0.45,0.0,0.05}
\definecolor{lime}{rgb}{0.00,0.8,0.0}
\definecolor{lblue}{rgb}{0.5,0.5,0.99}
\definecolor{OliveGreen}{rgb}{0,0.6,0}
\definecolor{tyrianpurple}{rgb}{0.4, 0.01, 0.24}

\colorlet{hlcyan}{cyan!30}

\usepackage{stmaryrd}
\usepackage{amssymb}

\makeatletter
\def\namedlabel#1#2{\begingroup
   \def\@currentlabel{#2}%
   \label{#1}\endgroup
}
\makeatother

\newcommand{\seppthree}{\setlength{\itemsep}{-3pt}}

\usepackage[margin=0.92in,footskip=0.25in]{geometry}
\newcommand{\weakly}{\ensuremath{\:{\rightharpoonup}\:}}

\newcommand{\nnn}{\ensuremath{{n\in{\mathbb N}}}}
\newcommand{\kkk}{\ensuremath{{k\in{\mathbb N}}}}
\newcommand{\thalb}{\ensuremath{\tfrac{1}{2}}}
\newcommand{\menge}[2]{\big\{{#1}~\big |~{#2}\big\}}

\newcommand{\fenv}[1]%
{\ensuremath{\,\overrightarrow{\operatorname{env}}_{#1}}}
\newcommand{\benv}[1]%
{\ensuremath{\,\overleftarrow{\operatorname{env}}_{#1}}}

\newcommand{\RR}{\ensuremath{\mathbb R}}

\newcommand{\ZZ}{\ensuremath{\mathbb Z}}
\newcommand{\NN}{\ensuremath{\mathbb N}}

\newcommand{\Fix}{\ensuremath{\operatorname{Fix}}}

\newcommand{\pinf}{\ensuremath{+\infty}}

\newcommand{\Bf}{\mathrm{B}}

\providecommand{\BN}{Bo\c{t}--Nguyen}
{\begin{list}{}{%
\settowidth{\labelwidth}{\textrm{#1~}}%
\setlength{\leftmargin}{\labelwidth+\labelsep}}}
{\end{list}}
\usepackage{amsthm}
\usepackage{aliascnt}
\makeatletter
\def\th@plain{%
	\thm@notefont{}
	\itshape 
}
\def\th@definition{%
	\thm@notefont{}
	\normalfont 
}
\makeatother
\usepackage[capitalize,nameinlink]{cleveref}
\crefname{equation}{}{equations}

\crefname{item}{}{items}
\crefname{enumi}{}{}
\newtheorem{theorem}{Theorem}[section]

\newaliascnt{lemma}{theorem}
\newtheorem{lemma}[lemma]{Lemma}
\aliascntresetthe{lemma}
\crefname{lemma}{Lemma}{Lemmas}
\Crefname{lemma}{Lemma}{Lemmas}

\newaliascnt{lem}{theorem}

\aliascntresetthe{lem}
\crefname{lem}{Lemma}{Lemmas}
\Crefname{lem}{Lemma}{Lemmas}

\newaliascnt{corollary}{theorem}
\newtheorem{corollary}[corollary]{Corollary}
\aliascntresetthe{corollary}
\crefname{corollary}{Corollary}{Corollaries}
\Crefname{corollary}{Corollary}{Corollaries}

\newaliascnt{cor}{theorem}

\aliascntresetthe{cor}
\crefname{cor}{Corollary}{Corollaries}
\Crefname{cor}{Corollary}{Corollaries}

\newaliascnt{proposition}{theorem}
\newtheorem{proposition}[proposition]{Proposition}
\aliascntresetthe{proposition}
\crefname{proposition}{Proposition}{Propositions}
\Crefname{proposition}{Proposition}{Propositions}

\newaliascnt{prop}{theorem}

\aliascntresetthe{prop}
\crefname{prop}{Proposition}{Propositions}
\Crefname{prop}{Proposition}{Propositions}

\newaliascnt{definition}{theorem}
\newtheorem{definition}[definition]{Definition}
\aliascntresetthe{definition}
\crefname{definition}{Definition}{Definitions}
\Crefname{definition}{Definition}{Definitions}

\newaliascnt{defn}{theorem}

\aliascntresetthe{defn}
\crefname{defn}{Definition}{Definitions}
\Crefname{defn}{Definition}{Definitions}

\newaliascnt{thm}{theorem}

\aliascntresetthe{thm}
\crefname{thm}{Theorem}{Theorems}
\Crefname{thm}{Theorem}{Theorems}

\newaliascnt{example}{theorem}
\newtheorem{example}[example]{Example}
\aliascntresetthe{example}
\crefname{example}{Example}{Examples}
\Crefname{example}{Example}{Examples}

\newaliascnt{ex}{theorem}
\newtheorem{ex}[ex]{Example}
\aliascntresetthe{ex}
\crefname{ex}{Example}{Examples}
\Crefname{ex}{Example}{Examples}

\newaliascnt{fact}{theorem}
\newtheorem{fact}[fact]{Fact}
\aliascntresetthe{fact}
\crefname{fact}{Fact}{Facts}
\Crefname{fact}{Fact}{Facts}

\newaliascnt{conjecture}{theorem}
\newtheorem{conjecture}[conjecture]{Conjecture}
\aliascntresetthe{conjecture}
\crefname{conjecture}{Conjecture}{Conjectures}
\Crefname{conjecture}{Conjecture}{Conjectures}

\newaliascnt{remark}{theorem}
\newtheorem{remark}[remark]{Remark}
\aliascntresetthe{remark}
\crefname{remark}{Remark}{Remarks}
\Crefname{remark}{Remark}{Remarks}

\newaliascnt{rem}{theorem}

\aliascntresetthe{rem}
\crefname{rem}{Remark}{Remarks}
\Crefname{rem}{Remark}{Remarks}

\crefname{chapter}{Appendix}{chapters}

\providecommand{\norm}[1]{\lVert#1\rVert}

\providecommand{\innp}[1]{\langle#1\rangle}

\providecommand{\RR}{\mathbb{R}}

\providecommand{\ZZ}{{\bf Z}}

\providecommand{\NN}{\mathbb{N}}

\providecommand{\RR}{\mathbb{R}}
\providecommand{\NN}{\mathbb{N}}

\definecolor{myblue}{rgb}{0.9,0.9,0.98}

  \newcommand*\mybluebox[1]{%
    \colorbox{myblue}{\hspace{1em}#1\hspace{1em}}}

\allowdisplaybreaks 

\usepackage{relsize}

\begin{document}

\setlength{\abovedisplayskip}{8pt}
\setlength{\belowdisplayskip}{8pt}	
\newsavebox\myboxA
\newsavebox\myboxB
\newlength\mylenA

\newcommand*\xoverline[2][0.75]{%
    \sbox{\myboxA}{$#2$}%
    \setbox\myboxB\null
    \ht\myboxB=\ht\myboxA%
    \dp\myboxB=\dp\myboxA%
    \wd\myboxB=#1\wd\myboxA
    \sbox\myboxB{$\overline{\copy\myboxB}$}
    \setlength\mylenA{\the\wd\myboxA}
    \addtolength\mylenA{-\the\wd\myboxB}%
    \ifdim\wd\myboxB<\wd\myboxA%
       \rlap{\hskip 0.5\mylenA\usebox\myboxB}{\usebox\myboxA}%
    \else
        \hskip -0.5\mylenA\rlap{\usebox\myboxA}{\hskip 0.5\mylenA\usebox\myboxB}%
    \fi}
\makeatother

\makeatletter
\renewcommand*\env@matrix[1][\arraystretch]{%
  \edef\arraystretch{#1}%
  \hskip -\arraycolsep
  \let\@ifnextchar\new@ifnextchar
  \array{*\c@MaxMatrixCols c}}
\makeatother

\providecommand{\wbar}{\xoverline[0.9]{w}}
\providecommand{\ubar}{\xoverline{u}}

\newcommand{\nn}[1]{\ensuremath{\textstyle\mathsmaller{({#1})}}}
\newcommand{\crefpart}[2]{%
  \hyperref[#2]{\namecref{#1}~\labelcref*{#1}~\ref*{#2}}%
}
\newcommand\bigzero{\makebox(0,0){\text{\LARGE0}}}
	

%

\author{
Heinz H.\ Bauschke\thanks{
Mathematics, University
of British Columbia,
Kelowna, B.C.\ V1V~1V7, Canada. E-mail:
\texttt{heinz.bauschke@ubc.ca}.}~~and
Yuan Gao\thanks{
Mathematics, University
of British Columbia,
Kelowna, B.C.\ V1V~1V7, Canada. 
 E-mail: \texttt{y.gao@ubc.ca}.}
}

\title{\textsf{ 
Bo\c{t}–Nguyen Acceleration, Weighted Mean Ergodic Iteration,\\ 
and the Beta-Binomial Distribution
}
}

\date{July 28, 2026}

\maketitle

\begin{abstract}
In 2023, Bo\c{t} and Nguyen introduced a new class of accelerated algorithms 
for finding a fixed point of a nonexpansive operator as the weak limit of
a sequence. In this paper, we analyze a particular instance of their algorithm 
when the nonexpansive operator is assumed to be linear. 
Surprisingly, the Bo\c{t}--Nguyen acceleration then fits naturally into the 
framework of weighted mean ergodic iterations. 
This allows us to identify the weak limit as the projection of the starting point 
onto the fixed point set. Moreover, the weights involved are closely related 
to the beta-binomial distribution. Finally, when the parameter is equal to $4$, 
then we obtain strong convergence of the iterates.

\end{abstract}
{ 
\small
\noindent
{\bfseries 2020 Mathematics Subject Classification:}
{Primary 
47H09,
47J26;
Secondary 
37A30, 
47H05,
47H25,
47N10,
60E05, 
65K05,
90C25.
}

\noindent {\bfseries Keywords:}
\BN\ acceleration, 
Cohen condition, 
discrete beta-binomial distribution, 
fixed point, 
Krasnoselskii-Mann iteration, 
Lorentz condition, 
nonexpansive mapping, 
regular matrix, 
Silverman-Toeplitz conditions, %
weighted mean ergodic iteration. 
}

    \section{Introduction}
Throughout this paper, we assume that 
\begin{empheq}[box=\mybluebox]{equation}
\text{$X$ is a real Hilbert space,}
\end{empheq}
with inner product $\innp{\cdot,\cdot}$ and induced norm $\norm{\cdot}$.
The problem of finding a fixed point of a nonexpansive operator acting on $X$
is of fundamental importance in nonlinear analysis and optimization; 
see, e.g., \cite{BC2017}. 
Recently, Bo\c{t} and Nguyen \cite{BN} introduced a new class of optimization algorithm 
for finding a fixed point, based on the idea of enhancing the 
classical Krasnoselskii-Mann iteration with Nesterov momentum. 
In its most basic form (namely 
\cite[Algorithm~1 with $s=1$ and $x_0=x_1$]{BN}), their algorithm has the 
following pleasing form\footnote{See \cite[(12)]{BN}. Their general version 
\cite[(11)]{BN} features 
another parameter, another difference of vectors, and allows for $x_1\neq x_0$.} 
with remarkable convergence properties:

\begin{fact}[{\BN\ acceleration}] 
{\rm\cite[Remark~2.1, and Theorems~3.4 and 3.5]{BN}}
\label{BNaccel}
Let $T\colon X\to X$ be nonexpansive with $\Fix T\neq\varnothing$, 
let $x_0\in X$, and let $\alpha>2$. 
Set $x_1 := x_0$ and, for $n\geq 1$, generate the next iterate via 
\begin{equation}
x_{n+1} := \Big( 1-\frac{\alpha}{2(n+\alpha)}\Big)x_n
+\frac{\alpha}{2(n+\alpha)}Tx_n+\frac{n}{n+\alpha}(Tx_n-Tx_{n-1}).
\end{equation}
Then the sequence $(x_n)_\nnn$ converges weakly to a fixed point of $T$. 
Moreover, $\|x_n-x_{n+1}\|=o(1/n)$ and $\|x_n-Tx_n\|=o(1/n)$.
\end{fact}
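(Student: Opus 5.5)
The plan is to follow the Lyapunov-energy strategy that underlies Nesterov-type accelerations of Krasnoselskii--Mann iterations. First I will recast the recursion in terms of the nonexpansive operator $M:=\tfrac12(\mathrm{Id}+T)$, or equivalently the $\tfrac12$-cocoercive operator $A:=\mathrm{Id}-T$. Writing $Tx_n = x_n - Ax_n$, the update becomes
\begin{equation*}
x_{n+1} = x_n - \frac{\alpha}{2(n+\alpha)}Ax_n + \frac{n}{n+\alpha}\big((x_n-x_{n-1}) - (Ax_n - Ax_{n-1})\big).
\end{equation*}
This has the form of a discretized second-order dynamics $\ddot x + \frac{\alpha}{t}\dot x + \frac{d}{dt}A(x) + \frac{\alpha}{2t}A(x)=0$. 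The continuous analogue suggests which energy to use. For a fixed $z\in\Fix T$ I will define
\begin{equation*}
\mathcal{E}_n := \tfrac12\big\|2\lambda(x_n - z) + n\big(x_n - x_{n-1} + Ax_{n-1}\big)\big\|^2 + 2\lambda(\alpha-1-\lambda)\|x_n-z\|^2 + (\text{terms of order } n^2\|Ax_{n-1}\|^2 \text{ and } n\langle Ax_{n-1},x_{n-1}-z\rangle),
\end{equation*}
with a parameter $\lambda\in(0,\alpha-1)$ to be chosen.

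The key step is to show that, for suitable $\lambda$ and for $n$ large enough,
\begin{equation*}
\mathcal{E}_{n+1}-\mathcal{E}_n \le -c\, n\|Ax_n\|^2 - c'(\alpha-2)\,n\|x_{n+1}-x_n\|^2 .
\end{equation*}
Establishing this requires two ingredients. The first is the cocoercivity inequality $\langle Ax-Ay,x-y\rangle\ge\tfrac12\|Ax-Ay\|^2$, applied both with $y=z$ (using $Az=0$) and with the pair $(x_n,x_{n-1})$. The second is an expansion of the squares. This computation is the main obstacle. One must match coefficients, including the quantities $\tfrac{\alpha}{2(n+\alpha)}$ and $\tfrac{n}{n+\alpha}$, so that every cross term is either absorbed by cocoercivity or has a sign. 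The condition $\alpha>2$ is exactly what leaves a strictly negative remainder. My approach would be to fix the unknown coefficients in $\mathcal{E}_n$ as functions of $n$, impose that the indefinite cross terms cancel, and then verify positivity of the remaining quadratic form.

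From the energy decay I will derive the following consequences:
\begin{enumerate}
\item $\mathcal{E}_n$ is bounded, which gives boundedness of $(x_n)$, of $n\|x_n-x_{n-1}\|$, and of $n\|Ax_n\|$.
\item The series $\sum n\|Ax_n\|^2$ and $\sum n\|x_{n+1}-x_n\|^2$ are finite.
\item Running the argument with two values of $\lambda$ and subtracting isolates the limit of $\|x_n-z\|^2$ plus a vanishing correction. Hence $\lim\|x_n-z\|$ exists for every $z\in\Fix T$.
\item A discrete lemma of the form $\theta_{n+1}\le\frac{n}{n+\alpha}\theta_n+\delta_n$ with $\sum\delta_n<\infty$ shows that $n^2\|Ax_n\|^2$ and $n^2\|x_{n+1}-x_n\|^2$ converge. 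Summability of $\frac1n$ times these quantities then forces the limits to be $0$. This yields the $o(1/n)$ rates.
\end{enumerate}

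Finally, weak convergence follows from Opial's lemma. Since $\|x_n-Tx_n\|\to0$, demiclosedness of $\mathrm{Id}-T$ at $0$ shows that every weak cluster point lies in $\Fix T$. Combined with the existence of $\lim\|x_n-z\|$ for all $z\in\Fix T$, this gives weak convergence of $(x_n)$ to a point of $\Fix T$.
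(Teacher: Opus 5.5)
The paper does not prove this Fact; it quotes it from Bo\c{t}--Nguyen, whose argument is indeed a parametrized Lyapunov energy followed by Opial's lemma, so your overall architecture is right. The gap is the energy you actually propose, which cannot deliver the key inequality. Test it in your own continuous model. The ODE reads $\frac{d}{dt}\big(\dot x+A(x)\big)=-\frac{\alpha}{t}\big(\dot x+\tfrac12A(x)\big)$. So for $u:=2\lambda(x-z)+t\big(\dot x+A(x)\big)$, the analogue of your anchor $2\lambda(x_n-z)+n(x_n-x_{n-1}+Ax_{n-1})$, one gets $\dot u=(2\lambda+1-\alpha)\dot x-\tfrac{\alpha-2}{2}A(x)$. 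The order-$t$ part of $\frac{d}{dt}\tfrac12\|u\|^2$ is then
\[
t\big\langle (2\lambda+1-\alpha)\dot x-\tfrac{\alpha-2}{2}A(x),\,\dot x+A(x)\big\rangle .
\]
This is a product of two linear forms in $(\dot x,A(x))$, so it is sign-indefinite unless $\lambda=\alpha/4$. Even then it only dissipates $t\|\dot x+A(x)\|^2$, not $t\|\dot x\|^2$ and $t\|A(x)\|^2$ separately. The extra terms you allow cannot repair this. Since $\dot u$ contains no $\frac{d}{dt}A(x)$, differentiating $t\langle A(x),x-z\rangle$ or $t^2\|A(x)\|^2$ creates $t\langle\frac{d}{dt}A(x),x-z\rangle$ and $t^2\langle A(x),\frac{d}{dt}A(x)\rangle$, which nothing cancels, so their coefficients must be zero. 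Your coefficient $2\lambda(\alpha-1-\lambda)$ is also inconsistent with this anchor: cancelling the $\langle x-z,\dot x\rangle$ terms would require $\lambda(\alpha-1-2\lambda)$. The discrete computation reproduces all of this at leading order in $n$. Hence the claimed decrease $-c\,n\|Ax_n\|^2-c'(\alpha-2)n\|x_{n+1}-x_n\|^2$, and the two-$\lambda$ trick that needs an interval of admissible $\lambda$, are out of reach.

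The missing idea is to weight the velocity twice as much as the residual. Take (a discretization of) $u_\lambda=2\lambda(x-z)+t\big(2\dot x+A(x)\big)$ and $\mathcal{E}_\lambda=\tfrac12\|u_\lambda\|^2+2\lambda(\alpha-1-\lambda)\|x-z\|^2+2\lambda t\langle A(x),x-z\rangle+\tfrac{t^2}{2}\|A(x)\|^2$. Then every cross term containing $\frac{d}{dt}A(x)$ cancels except one that has a sign by cocoercivity, and
\[
\dot{\mathcal{E}}_\lambda=2\lambda(2-\alpha)\langle A(x),x-z\rangle-2t^2\big\langle\dot x,\tfrac{d}{dt}A(x)\big\rangle+4(\lambda+1-\alpha)\,t\big\|\dot x+\tfrac12A(x)\big\|^2+(1-\lambda)\,t\|A(x)\|^2 .
\]
This is strictly dissipative for $1<\lambda<\alpha-1$, and that is where $\alpha>2$ enters.

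Two later steps also need fixing. In step~3, the $\lambda$-linear part of $\mathcal{E}_\lambda$ is $2\lambda\big((\alpha-1)\|x-z\|^2+t\frac{d}{dt}\|x-z\|^2+2t\langle A(x),x-z\rangle\big)$, which is not $\|x-z\|^2$ plus a vanishing term. You need an averaging lemma: if $(\alpha-1)h+t\dot h=q-2t\varphi$ with $q$ convergent and $\varphi\geq0$ integrable, then $h$ converges. Integrability of $\varphi=\langle A(x),x-z\rangle$ comes from the first dissipation term. In step~4, you give no reason why $\theta_{n+1}\leq\frac{n}{n+\alpha}\theta_n+\delta_n$ should hold. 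The standard route is this: once the energies for two values of $\lambda$ converge, the $\lambda$-free part $\tfrac{t^2}{2}\big(\|2\dot x+A(x)\|^2+\|A(x)\|^2\big)$ converges. Summability of $t\|\dot x\|^2$ and $t\|A(x)\|^2$ then forces its limit to be $0$, which gives the $o(1/n)$ rates. Your final Opial/demiclosedness step is fine.
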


Motivated by the good numerical performance of the \BN\ acceleration 
(see \cite[Section~5]{BN}), we set out to understand the \BN\ iteration better 
when $T$ is also assumed to be \textbf{linear}. 
This additional property allows us to obtain the following 
\textbf{main results}: 
\begin{itemize}
\item[\bf R1] We reveal the \BN\ iteration as a \emph{weighted mean ergodic iteration} 
(see \cref{p:3.1});
\item[\bf R2] We identify the weak limit of the \BN\ iteration as the \emph{projection
of the initial point} onto the fixed point set of $T$ (see \cref{p:3.2});
\item[\bf R3] We show that the convergence is \emph{strong} when $\alpha=4$ 
(see \cref{t:super4}). 
\end{itemize}

The rest of the paper is organized as follows. 
In \cref{s:ergodic}, we review key weighted mean ergodic theorems 
and corresponding notions of regularity, the Cohen condition, 
and the Lorentz condition. The topic of \cref{s:betabin} is 
the discrete (symmetric) beta-binomial distribution whose 
corresponding matrix is shown to be regular and to satisfy the Lorentz condition.
In \cref{s:main}, we present the proofs of our main results 
\textbf{R1} and \textbf{R2}. 
The special case when $\alpha=4$ is analyzed in full detail in \cref{s:alpha4} 
where we also establish \textbf{R3}, the strong convergence of the \BN\ iteration. 
In the final \cref{s:beyond4}, we provide some observations, conjectures, 
and a direction for future research.

The notation used in this paper is fairly standard and 
largely follows \cite{BC2017}.

\section{Weighted Mean Ergodic Theorem and Regular Matrices}

\label{s:ergodic}

\subsection*{The Weak Limit of a Weighted Mean Ergodic Iteration}

We start with the following result concerning the identification of a
weak limit.

\begin{lemma}[abstract weak limit identification]
\label{dalimit}
Let $(T_n)_\nnn$ be a sequence of nonexpansive operators on $X$, 
let $x_0\in X$ and generate $(x_n)_\nnn$ via 
\begin{equation}
(\forall\nnn)\quad
x_{n+1} := T_nx_0,
\end{equation}
and let $F$ be a closed linear subspace of $X$. 
Assume that 
\begin{equation}
x_n\weakly Qx_0 \in F \subseteq \bigcap_\nnn \Fix T_n.
\end{equation}
Then 
$(\forall f\in F)(\forall\nnn)$ $\|x_{n}-f\|\leq \|x_0-f\|$, and
\begin{equation}
\label{e:quasi2}
    Qx_0 = P_Fx_0.
\end{equation}
\end{lemma}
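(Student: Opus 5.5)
The plan is to obtain the norm inequality directly from nonexpansiveness, and then to identify the weak limit through a variational characterization of $P_Fx_0$.

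\emph{The inequality.} Fix $f\in F$ and $\nnn$. Since $F\subseteq\Fix T_n$, we have $T_nf=f$. Nonexpansiveness of $T_n$ then gives
\begin{equation*}
\|x_{n+1}-f\| = \|T_nx_0-T_nf\| \leq \|x_0-f\|.
\end{equation*}
This covers every index $n+1\geq 1$. The case $n=0$ is the trivial equality $\|x_0-f\|=\|x_0-f\|$, so the estimate holds for all indices.

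\emph{Identifying the limit.} The tool is weak lower semicontinuity of the norm. Since $x_n-f\weakly Qx_0-f$, we get
\begin{equation*}
\|Qx_0-f\|\leq \liminf_{n\to\infty}\|x_n-f\|\leq\|x_0-f\| \quad\text{for every } f\in F.
\end{equation*}
Put $p:=P_Fx_0$ and $q:=Qx_0$; both lie in $F$ by hypothesis. Because $F$ is a closed linear subspace, $x_0-p\perp F$. For any $f\in F$, Pythagoras then gives
\begin{equation*}
\|x_0-f\|^2=\|x_0-p\|^2+\|p-f\|^2.
\end{equation*}

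I would test the displayed inequality at a suitable $f\in F$ that makes $\|q-f\|$ large relative to $\|x_0-f\|$. Choosing $f=p$ only yields $\|q-p\|\leq\|x_0-p\|$, which is not enough. Instead, use linearity of $F$: for $t\in\RR$, take $f_t:=p+t(q-p)\in F$. Then
\begin{equation*}
\|q-f_t\|^2=(1-t)^2\|q-p\|^2
\quad\text{and}\quad
\|x_0-f_t\|^2=\|x_0-p\|^2+t^2\|q-p\|^2.
\end{equation*}
The inequality therefore becomes
\begin{equation*}
(1-2t)\|q-p\|^2\leq\|x_0-p\|^2 \quad\text{for all } t\in\RR.
\end{equation*}
Letting $t\to-\infty$ forces $\|q-p\|^2=0$, hence $q=p$, which is \eqref{e:quasi2}.

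The main obstacle is exactly this last step: recognising that the single choice $f=p$ is insufficient and that one must exploit the whole line through $p$ and $q$ inside $F$. This is where closedness and linearity of $F$ enter; projection uniqueness alone, via $f=p$, does not conclude.
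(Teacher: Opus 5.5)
Your proposal is correct and follows the paper's proof essentially step for step. Nonexpansiveness together with $F\subseteq\Fix T_n$ gives the norm inequality, and weak lower semicontinuity transfers it to $Qx_0$. Testing at $f_t=(1-t)P_Fx_0+tQx_0\in F$ then yields $(1-2t)\|Qx_0-P_Fx_0\|^2\le\|x_0-P_Fx_0\|^2$, and letting $t\to-\infty$ concludes; your explicit remark on the trivial case $n=0$ is a small addition that the paper leaves implicit.
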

\begin{proof}
Let $f\in F$. Then 
$(\forall\nnn)$ 
$\|x_{n+1}-f\| = \|T_nx_0-T_nf\|\leq \|x_0-f\|$ 
by the nonexpansiveness of $T_n$ and the assumption 
that $f\in \Fix T_n$.
Taking the weak limit, we obtain 
\begin{equation}
\label{e:quasi1}
(\forall f\in F)\quad 
\|Qx_0-f\|\leq \varliminf_{n\to\infty} \|x_{n+1}-f\| \leq \|x_0-f\|. 
\end{equation}
For every $t\in\RR$, set $f_t := (1-t)P_Fx_0 + tQx_0 \in F$. 
Now substitute this $f_t$ into \cref{e:quasi1}, then square and expand to obtain
$(1-t)^2\|Qx_0-P_Fx_0\|^2 \leq \|P_{F^\perp}x_0\|^2+t^2\|P_Fx_0-Qx_0\|^2$;
thus, 
\begin{equation*}
(1-2t)\|Qx_0-P_Fx_0\|^2 \leq \|P_{F^\perp}x_0\|^2. 
\end{equation*}
Letting $t\to-\infty$ yields $Qx_0 = P_Fx_0$, i.e., \cref{e:quasi2}.
\end{proof}

\begin{remark}
Let $C$ be a nonempty closed convex subset of $X$, and let $x_0\in X$. The set 
\begin{equation}
\mathcal{Q}_C(x_0) 
:= \menge{y\in C}{(\forall c\in C)\;\;\|c-y\|\leq \|c-x_0\|}
\end{equation}
was termed the 
\emph{quasi-projection of $x_0$ on $C$} 
by Baillon and Bruck in \cite[Page~19]{BaiBru}; 
they pointed out that it ``is the set of points 
which are candidate images of $x_0$ in retracting 
$C\cup\{x_0\}$ onto $C$ quasi-nonexpansively''. 
Using this notion, we see that 
\cref{e:quasi1} states that 
$Qx_0\in \mathcal{Q}_F(x_0)$. 
Because $F$ is a closed linear subspace, it follows from 
\cite[Proposition~3.4.4]{Thesis} that 
$\mathcal{Q}_F(x_0)=\{P_{F}(x_0)\}$ --- for 
the reader's convenience, we have included this
argument in the proof of \cref{dalimit}. 
\end{remark}

\begin{corollary}[weak limit of a mean ergodic iteration]
\label{Cohenlim}
Let $T\colon X\to X$ be linear and nonexpansive, let 
$C = (c_{n,k})_{(n,k)\in\NN\times\NN}\in\RR^{\NN\times\NN}$, 
and assume that 
\begin{enumerate}
\item 
$(\forall\nnn)$ $\displaystyle\sum_{k=0}^\infty c_{n,k}=1$,\\[+1mm]
\item 
$(\forall\nnn)(\forall\kkk)$ $c_{n,k}\geq 0$,\\[+1mm]
\item 
$(\forall x_0\in X)$ \;\;
$\displaystyle \mathrm{weak}\lim_{n\to\infty} \sum_{k=0}^{\infty} c_{n,k}T^kx_0 \in \Fix T$.
\end{enumerate}
Then for every $x_0\in X$, we have
\begin{equation}
\sum_{k=0}^\infty c_{n,k}T^kx_0 \weakly P_{\Fix T}(x_0).
\end{equation}
\end{corollary}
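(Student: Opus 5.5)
The plan is to apply \cref{dalimit} with $F:=\Fix T$ and $T_n := \sum_{k=0}^\infty c_{n,k}T^k$. The steps are to check that each $T_n$ is a well-defined nonexpansive operator, that $F$ is a closed linear subspace contained in $\bigcap_\nnn \Fix T_n$, and that the weak limit lies in $F$.

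First, $T_n$ is well defined and nonexpansive. Since $T$ is linear and nonexpansive, $\|T^k\|\leq 1$ for every $k$. By (i) and (ii), $\sum_k c_{n,k}\|T^kx\|\leq \|x\|$, so the series $\sum_k c_{n,k}T^kx$ converges absolutely in norm for every $x\in X$. Hence $T_n$ is a linear operator with $\|T_nx-T_ny\| \leq \sum_k c_{n,k}\|T^k(x-y)\|\leq \|x-y\|$.

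Second, $F=\Fix T$ has the required properties. It is a linear subspace because $T$ is linear. It is closed because $T$ is continuous. If $f\in F$, then $T^kf=f$ for all $k$, so (i) gives $T_nf=\sum_k c_{n,k}f=f$. Thus $F\subseteq\bigcap_\nnn\Fix T_n$.

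Third, assumption (iii) says the weak limit $Qx_0$ of $T_nx_0$ exists and lies in $F$. To match the indexing of \cref{dalimit}, whose sequence is $x_{n+1}=T_nx_0$, we only need the shift $x_{n+1}:=T_nx_0$. Weak convergence of $(x_{n+1})$ is the same as weak convergence of $(x_n)$, and the value of $x_0$ itself is irrelevant for the limit.

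\cref{dalimit} then gives $Qx_0=P_Fx_0=P_{\Fix T}x_0$, which is the claim. The only point needing care is the first step, namely convergence of the series defining $T_n$ and its nonexpansiveness. Absolute summability together with $\|T^k\|\leq1$ settles it, so I do not expect a real obstacle.
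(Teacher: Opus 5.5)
Your proposal is correct and is the paper's proof: the paper also applies \cref{dalimit} with $T_n := \sum_{k=0}^\infty c_{n,k}T^k$ and $F=\Fix T\subseteq\Fix T_n$. You additionally spell out details the paper leaves implicit, namely norm convergence of the series via $\|T^k\|\leq 1$, closedness of $\Fix T$, and the index shift, and all of these are sound.
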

\begin{proof}
This follows from \cref{dalimit} 
by setting $T_n := \sum_{k=0}^\infty c_{n,k}T^k$ and noting that $T_n$ is nonexpansive with 
$F = \Fix T \subseteq \Fix T_n$ 
\end{proof}

\subsection*{Regular Matrices and a Convergent 
Weighted Mean Ergodic Iteration}

We now turn to the notion of a regular matrix, and the conditions 
by Cohen and by Lorentz. These are instrumental in 
establishing convergence for weighted mean ergodic iterations. 

\begin{definition}[regular matrix]
Let $C = (c_{n,k})_{(n,k)\in\NN\times\NN}\in\RR^{\NN\times\NN}$. 
Then $C$ is 
        \emph{regular} if for every convergent sequence of real numbers $(\xi_n)_\nnn$, one has 
        \begin{align*}
            \lim_{n}\sum_{k=0}^\infty c_{n,k}\xi_k=\lim_{n}\xi_n. 
        \end{align*}
    \end{definition}

\begin{fact}[Silverman-Toeplitz]   
\label{f:ST}
Let $C = (c_{n,k})_{(n,k)\in\NN\times\NN}\in\RR^{\NN\times\NN}$. 
Then $C$ is regular if and only if the following three conditions 
hold simultaneously:
\begin{enumerate}[leftmargin=3.8em,itemsep=0.5em]
\item[\bf (ST1)] $\displaystyle \sup_\nnn \sum_{k=0}^\infty |c_{n,k}| <\pinf$. 
\item[\bf (ST2)] $\displaystyle (\forall\kkk)$ 
$\displaystyle\lim_{n\to\infty} c_{n,k}=0$. 
\item[\bf (ST3)] $\displaystyle \lim_{n\to\infty}\sum_{k=0}^\infty c_{n,k}=1$.
\end{enumerate}
\end{fact}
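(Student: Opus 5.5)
We prove the two implications separately. We view $C$ as a linear operator on sequences, $(\xi_k)\mapsto\big(\sum_k c_{n,k}\xi_k\big)_n$. Let $c$ denote the Banach space of convergent real sequences with the supremum norm, and $c_0$ its closed subspace of null sequences.

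\textbf{Sufficiency.} Assume (ST1)--(ST3), and let $\xi_k\to\xi$. Write $\xi_k=\xi+\eta_k$ with $\eta_k\to 0$. Then
\begin{equation*}
\sum_k c_{n,k}\xi_k=\xi\sum_k c_{n,k}+\sum_k c_{n,k}\eta_k .
\end{equation*}
The first term tends to $\xi$ by (ST3). For the second term, set $M:=\sup_n\sum_k|c_{n,k}|$, which is finite by (ST1). Given $\varepsilon>0$, choose $K$ such that $|\eta_k|\le\varepsilon$ for all $k>K$. Split the sum at $K$:
\begin{equation*}
\Big|\sum_k c_{n,k}\eta_k\Big|\le \sum_{k\le K}|c_{n,k}|\,|\eta_k|+M\varepsilon .
\end{equation*}
The finite head tends to $0$ as $n\to\infty$ by (ST2). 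Hence $\limsup_n\big|\sum_k c_{n,k}\eta_k\big|\le M\varepsilon$, and since $\varepsilon$ is arbitrary, the limit is $0$. Absolute convergence of every series used here also follows from (ST1).

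\textbf{Necessity of (ST2) and (ST3).} Assume $C$ is regular. Testing with the unit sequence $e^{(k)}$ (a $1$ in slot $k$, zeros elsewhere), whose limit is $0$, gives $c_{n,k}\to 0$, which is (ST2). Testing with the constant sequence $\xi\equiv 1$ gives $\sum_k c_{n,k}\to 1$, which is (ST3).

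\textbf{Necessity of (ST1).} This is the main obstacle. First, for each fixed $n$, the series $\sum_k c_{n,k}\xi_k$ converges for every $\xi\in c_0$, since regularity presupposes that the row sums exist. A standard argument then gives $\sum_k|c_{n,k}|<\infty$: if this sum were infinite, choose $\xi_k=\operatorname{sgn}(c_{n,k})/s_k$ with slowly growing weights $s_k\to\infty$, arranged so that the series diverges. Consequently each functional $f_n(\xi):=\sum_k c_{n,k}\xi_k$ is bounded on $c_0$, with norm $\|f_n\|=\sum_k|c_{n,k}|$. This norm is attained in the limit by the truncated sign sequences $\xi_k=\operatorname{sgn}(c_{n,k})$ for $k\le N$ and $\xi_k=0$ for $k>N$, letting $N\to\infty$.

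By regularity, the sequence $(f_n(\xi))_n$ converges, and is therefore bounded, for every $\xi\in c_0$. The Banach--Steinhaus uniform boundedness principle on the Banach space $c_0$ then gives $\sup_n\|f_n\|<\infty$, which is exactly (ST1).

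Should one wish to avoid functional analysis, the alternative is a gliding-hump construction. Assuming $\sup_n\sum_k|c_{n,k}|=\infty$, pick inductively rows $n_j$ and disjoint blocks of indices. Each block should carry most of the $\ell^1$ mass of row $n_j$, while (ST2) makes the earlier columns negligible. Placing signs scaled by $1/j$ on these blocks then produces a null sequence $\xi$ with $|f_{n_j}(\xi)|\to\infty$, contradicting regularity.
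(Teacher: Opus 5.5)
Your proof is correct. It differs from the paper mainly in that the paper does not prove this statement at all: it records it as a Fact and points to Toeplitz, Silverman, and \cite[Theorem~1.1]{ADN}.

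What you supply is a self-contained version of the standard modern argument.

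\textbf{Routine parts.} Sufficiency and the necessity of (ST2) and (ST3) are straightforward, and you handle them cleanly. This includes your remark that (ST1) makes every row series absolutely convergent against a bounded sequence.

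\textbf{Necessity of (ST1).} Here your main line has two functional-analytic steps.
\begin{enumerate}
\item A row-wise step shows that each row lies in $\ell^1$. Your ``slowly growing weights'' can be made explicit by the Abel--Dini choice $s_k=\sum_{j\le k}|c_{n,j}|$ (from the first index where this is positive). Then $\sum_{k=m+1}^{p}|c_{n,k}|/s_k\ge 1-s_m/s_p\to 1$ as $p\to\infty$, which violates the Cauchy criterion, while $1/s_k\to 0$.
\item The Banach--Steinhaus theorem on $c_0$, together with the duality $\|f_n\|=\sum_k|c_{n,k}|$, then gives (ST1).
\end{enumerate}

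\textbf{Comparison with the gliding hump.} Your main route is short and shows why (ST1) is exactly a uniform bound on operator norms. Its price is that it is non-constructive and relies on the completeness of $c_0$. The gliding-hump construction you sketch at the end is the classical elementary alternative. It explicitly produces a null sequence whose transform is unbounded along a subsequence of rows, and is in effect the proof of Banach--Steinhaus specialized to this setting. It also needs the row-wise $\ell^1$ step, so that the tail of each chosen row can be made small. In addition, the rows $n_j$ must be chosen with $\sum_k|c_{n_j,k}|$ growing faster than $j$ (say at least $j^2$), so that the $1/j$ scaling still forces divergence.
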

\begin{proof}
See the historical references \cite{Toeplitz} or \cite{Silverman}, 
and, e.g., \cite[Theorem~1.1]{ADN} for a fairly recent presentation.
\end{proof}

\begin{definition}[Cohen condition and Lorentz condition]
Let $C = (c_{n,k})_{(n,k)\in\NN\times\NN}\in\RR^{\NN\times\NN}$. 
Then the Cohen condition {\rm \textbf{(C)}}  
and the Lorentz condition {\rm \textbf{(L)}} 
are the following: 
\begin{enumerate}[leftmargin=3.8em,itemsep=0.5em]
\item[\bf (C)] $\displaystyle \lim_{K\to\infty}\sup_{\nnn}
\sum_{k=K}^\infty |c_{n,k}-c_{n,k+1}| = 0$. 
\item[\bf (L)] $\displaystyle \lim_{n\to\infty}
\sum_{k=0}^\infty |c_{n,k}-c_{n,k+1}| = 0$. 
\end{enumerate}
\end{definition}

The Cohen condition is important to us because of the following:

\begin{fact}[Cohen] 
{\rm \cite[Theorem]{Cohen}}
\label{f:Cohen}
Let $T\colon X\to X$ be linear and nonexpansive. 
Let $C = (c_{n,k})_{(n,k)\in\NN\times\NN}\in\RR^{\NN\times\NN}$ be regular
and assume the Cohen condition holds. 
Then for every $x_0$, we have the following strong convergence:
\begin{equation}
\label{e:Cohen}
\lim_{n\to\infty} \sum_{k=0}^{\infty} c_{n,k}T^kx_0 \in \Fix T.
\end{equation}
\end{fact}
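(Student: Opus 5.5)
The plan is to use the mean ergodic decomposition $X=\Fix T\oplus\overline{\operatorname{ran}}(\mathrm{Id}-T)$, which holds for linear nonexpansive $T$ on a Hilbert space. Set $A_n:=\sum_{k=0}^\infty c_{n,k}T^k$. By (ST1), $\|A_n\|\leq M:=\sup_n\sum_k|c_{n,k}|<\pinf$, so each $A_n$ is a well-defined bounded linear operator with a uniform bound. It therefore suffices to show that $(A_nx_0)_\nnn$ converges, with limit in $\Fix T$, for $x_0$ in each summand separately. A uniform-bound/density argument then extends the convergence from a dense set to its closure, and linearity handles sums.

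On $\Fix T$ the argument is immediate. There $T^kx_0=x_0$, so $A_nx_0=\big(\sum_k c_{n,k}\big)x_0\to x_0\in\Fix T$ by (ST3).

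On $\operatorname{ran}(\mathrm{Id}-T)$, write $x_0=y-Ty$. Abel summation gives
\begin{equation*}
A_nx_0=\sum_{k=0}^\infty c_{n,k}(T^ky-T^{k+1}y)=c_{n,0}y+\sum_{k=1}^{\infty}(c_{n,k}-c_{n,k-1})T^ky,
\end{equation*}
once we justify that $c_{n,k}T^{k+1}y\to0$ as $k\to\infty$ for fixed $n$; this holds because $\sum_k|c_{n,k}|<\pinf$ and $\|T^{k+1}y\|\leq\|y\|$. The hard step is showing this tends to $0$ in norm, since we only know (C) and not (L). Fix $\varepsilon>0$ and pick $K$ by (C) with $\sup_n\sum_{k\geq K}|c_{n,k}-c_{n,k+1}|<\varepsilon$. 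Then
\begin{equation*}
\|A_nx_0\|\leq |c_{n,0}|\|y\|+\sum_{k=1}^{K}|c_{n,k}-c_{n,k-1}|\|y\|+\varepsilon\|y\|.
\end{equation*}
The finitely many terms vanish as $n\to\infty$ by (ST2), so $\limsup_n\|A_nx_0\|\leq\varepsilon\|y\|$ and hence $A_nx_0\to0\in\Fix T$.

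For $x_0\in\overline{\operatorname{ran}}(\mathrm{Id}-T)$, approximate by $z=y-Ty$ with $\|x_0-z\|<\delta$. Then $\|A_nx_0\|\leq M\delta+\|A_nz\|$, so $\limsup_n\|A_nx_0\|\leq M\delta$, which gives $A_nx_0\to0$.

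Combining both summands, $A_nx_0\to P_{\Fix T}x_0\in\Fix T$ strongly, which is \cref{e:Cohen}. The decomposition itself I would justify quickly: for linear nonexpansive $T$, $\Fix T=\Fix T^*$, so $(\operatorname{ran}(\mathrm{Id}-T))^\perp=\ker(\mathrm{Id}-T^*)=\Fix T$. The main obstacle is the uniform tail control in the Abel-summation step, which is exactly where (C) substitutes for (L).
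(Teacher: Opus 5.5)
The paper does not prove this statement; it imports it from Cohen's paper as a black box. Your argument is a correct, self-contained proof along classical von Neumann lines. You use the orthogonal splitting $X=\Fix T\oplus\overline{\operatorname{ran}}(\mathrm{Id}-T)$, obtained from $\Fix T=\Fix T^*$. Then \textbf{(ST3)} handles the first summand, Abel summation handles $\operatorname{ran}(\mathrm{Id}-T)$, and the uniform bound from \textbf{(ST1)} passes to the closure.

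Compared with how the paper uses the fact, your route buys two things. First, it identifies the limit as $P_{\Fix T}x_0$ for free, for every regular $C$ satisfying \textbf{(C)}. It needs no sign condition and no requirement that rows sum exactly to $1$. The paper instead needs the quasi-projection argument of \cref{dalimit} and \cref{Cohenlim}, which relies on nonnegative weights with unit row sums so that $\sum_k c_{n,k}T^k$ is nonexpansive and fixes $\Fix T$ pointwise. Second, your estimate is really
\[
\|A_n(y-Ty)\|\leq\Big(|c_{n,0}|+\sum_{k\geq 0}|c_{n,k+1}-c_{n,k}|\Big)\|y\|,
\]
so what drives it is \textbf{(L)} together with \textbf{(ST2)}; your splitting at $K$ just re-derives \cref{p:helps}\cref{p:helps1}. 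Your closing remark that \textbf{(C)} ``substitutes for'' \textbf{(L)} is therefore slightly misleading, since for regular matrices the two are equivalent (\cref{c:CL}). In fact your argument would prove \cref{cool} directly from \textbf{(L)}, bypassing \cref{c:CL} as well.

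In return, the paper's \cref{dalimit} is formulated for arbitrary nonexpansive $T_n$ and needs only a weak limit in $F$. That is how it is used in \cref{p:3.2} for general $\alpha>2$, where only the weak convergence from \cite{BN} is known and \textbf{(L)} is open (\cref{c:allalpha}). Your splitting could cover that linear case too: each $A_n$ leaves the weakly closed subspace $\overline{\operatorname{ran}}(\mathrm{Id}-T)$ invariant. This forces any weak limit of $A_n(x_0-P_{\Fix T}x_0)$ into $\Fix T\cap\overline{\operatorname{ran}}(\mathrm{Id}-T)=\{0\}$.
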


The Cohen condition \textbf{(C)} is 
a little bit awkward to verify. 
It turns out that one can work instead with the 
more pleasant Lorentz condition \textbf{(L)}. 
To see this, we need the following: 

\begin{proposition}
\label{p:helps}
Let $C = (c_{n,k})_{(n,k)\in\NN\times\NN}\in\RR^{\NN\times\NN}$. 
Then we have the following implications:
\begin{enumerate}[itemsep=0.5em]
\item 
\label{p:helps1}
Suppose that $C$ satisfies {\rm\textbf{(ST2)}} and {\rm\textbf{(C)}}. 
Then $C$ satisfies {\rm\textbf{(L)}}.
\item 
\label{p:helps2}
Suppose that $C$ satisfies {\rm\textbf{(L)}} 
and $(\forall\nnn)$ $\displaystyle\sum_{k=0}^\infty |c_{n,k}|<\pinf$.
Then $C$ satisfies  {\rm\textbf{(C)}}.
\end{enumerate}
\end{proposition}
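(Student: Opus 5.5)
For \cref{p:helps1}, I split the Lorentz sum at a cutoff $K$. Fix $\varepsilon>0$. By \textbf{(C)}, choose $K$ such that
\begin{equation*}
\sup_{\nnn}\sum_{k=K}^\infty |c_{n,k}-c_{n,k+1}|\leq\varepsilon.
\end{equation*}
Then for every $\nnn$,
\begin{equation*}
\sum_{k=0}^\infty |c_{n,k}-c_{n,k+1}|
\leq \sum_{k=0}^{K-1}\big(|c_{n,k}|+|c_{n,k+1}|\big) + \varepsilon .
\end{equation*}
The first term is a finite sum involving only the entries $c_{n,0},\ldots,c_{n,K}$. By \textbf{(ST2)} each of these tends to $0$ as $n\to\infty$, so the first term tends to $0$. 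Hence $\varlimsup_{n\to\infty}\sum_{k}|c_{n,k}-c_{n,k+1}|\leq\varepsilon$. Since $\varepsilon>0$ was arbitrary, \textbf{(L)} follows.

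For \cref{p:helps2}, fix $\varepsilon>0$ and set $d_n(K):=\sum_{k=K}^\infty|c_{n,k}-c_{n,k+1}|$. This quantity is nonincreasing in $K$ and bounded by $d_n(0)$. By \textbf{(L)} there exists $N$ with $d_n(0)\leq\varepsilon$ for all $n\geq N$, and therefore $d_n(K)\leq\varepsilon$ for all $n\geq N$ and all $K$.

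It remains to handle the finitely many rows $n=0,\ldots,N-1$. For each such row, the hypothesis $\sum_k|c_{n,k}|<\pinf$ gives $\sum_k|c_{n,k}-c_{n,k+1}|\leq 2\sum_k|c_{n,k}|<\pinf$. Thus $d_n(K)$ is a tail of a convergent series and tends to $0$ as $K\to\infty$. Choose $K_0$ so that $d_n(K)\leq\varepsilon$ for all $n<N$ and all $K\geq K_0$; this is possible because only finitely many rows are involved. Then $\sup_n d_n(K)\leq\varepsilon$ for all $K\geq K_0$, which is \textbf{(C)}.

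Neither part has a real obstacle. The only points needing care are these: the finiteness of the row sums is used precisely to make the tails of the finitely many early rows vanish, and monotonicity in $K$ lets \textbf{(L)} control all tails of the late rows uniformly.
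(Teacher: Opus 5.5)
Your proposal is correct and follows the paper's proof in both parts. In \cref{p:helps1} you choose a cutoff $K$ from \textbf{(C)} and use \textbf{(ST2)} to make the finite head vanish. In \cref{p:helps2} you use \textbf{(L)} to control all tails of the late rows, and row summability to handle the finitely many early rows by taking a maximum of cutoffs. Your explicit remark that the tails are nonincreasing in $K$, which gives the bound for all $K\geq K_0$, makes slightly more precise the final step that the paper leaves implicit.
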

\begin{proof}
\cref{p:helps1}: 
Let $\varepsilon>0$. 
From \textbf{(C)}, we obtain $K\in\NN$ such that 
\begin{equation}
\label{e:260324a}
\sup_\nnn \sum_{k=K}^\infty |c_{n,k}-c_{n,k+1}| < \thalb\varepsilon. 
\end{equation}
If $k\in\{0,1,\ldots,K-1\}$, 
then \textbf{(ST2)} implies 
$\lim_{n\to\infty}c_{n,k}=0$ and
$\lim_{n\to\infty}c_{n,k+1}=0$; 
hence, $\lim_{n\to\infty}|c_{n,k}-c_{n,k+1}| = 0$.
It follows that 
$\lim_{n\to\infty}
\sum_{k=0}^{K-1} |c_{n,k}-c_{n,k+1}| = 0$. 
Hence there exists $N\in\NN$ such that 
\begin{equation}
\label{e:260324b}
(\forall n\geq N)\quad 
\sum_{k=0}^{K-1} |c_{n,k}-c_{n,k+1}| < \thalb\varepsilon. 
\end{equation}
Combining \cref{e:260324a} with \cref{e:260324b}, 
we deduce that 
\begin{equation*}
(\forall n\geq N)\quad 
\sum_{k=0}^{\infty} |c_{n,k}-c_{n,k+1}| < \varepsilon,
\end{equation*}
which yields \textbf{(L)}. 

\cref{p:helps2}: 
If $\nnn$, then, 
by the summability assumption, 
$(c_{n,k})_{\kkk}$ and 
$(c_{n,k+1})_{\kkk}$ both belong to $\ell^1(\NN)$; hence, 
$(c_{n,k}-c_{n,k+1})_{\kkk} \in \ell^1(\NN)$ as well. 
In turn, this implies 
\begin{equation}
\label{e:260324c}
(\forall\nnn)\quad 
\lim_{K\to\infty}
\sum_{k=K}^\infty |c_{n,k}-c_{n,k+1}| = 0. 
\end{equation}
Now let $\varepsilon > 0$. 
By \textbf{(L)},  
\begin{equation*}
(\exists N\geq 1)(\forall n\geq N)\quad
\sum_{k=0}^\infty |c_{n,k}-c_{n,k+1}| < \varepsilon. 
\end{equation*}
Consequently, 
\begin{equation}
\label{e:260324f}
(\forall K\in\NN)(\forall n\geq N)\quad
\sum_{k=K}^\infty |c_{n,k}-c_{n,k+1}| < \varepsilon. 
\end{equation}
Now \cref{e:260324c} implies that 
\begin{equation}
\label{e:260324d}
(\forall n\in\{0,1,\ldots,N-1\})(\exists K_n\in\NN)\quad 
\sum_{k=K_n}^\infty |c_{n,k}-c_{n,k+1}| < \varepsilon. 
\end{equation}
Set 
\begin{equation*}
K := \max\{K_0,K_1,\ldots,K_{N-1}\}. 
\end{equation*}
Then \cref{e:260324d} yields 
\begin{equation}
\label{e:260324e}
(\forall n\in\{0,1,\ldots,N-1\})
\quad 
\sum_{k=K}^\infty |c_{n,k}-c_{n,k+1}| < \varepsilon.
\end{equation}
Combining \cref{e:260324f} and \cref{e:260324e}, 
we deduce that 
\begin{equation*}
(\forall\nnn)\quad 
\sum_{k=K}^\infty |c_{n,k}-c_{n,k+1}| < \varepsilon. 
\end{equation*}
Therefore, 
\begin{equation*}
\sup_\nnn\sum_{k=K}^\infty |c_{n,k}-c_{n,k+1}| \leq \varepsilon, 
\end{equation*}
and this gives us 
\textbf{(C)}. 
\end{proof}

\begin{corollary}
\label{c:CL}
Let $C = (c_{n,k})_{(n,k)\in\NN\times\NN}\in\RR^{\NN\times\NN}$ be regular.
Then: 
$C$ satisfies {\rm\textbf{(C)}}
$\Leftrightarrow$ 
$C$ satisfies~{\rm\textbf{(L)}}. 
\end{corollary}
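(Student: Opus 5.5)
The plan is to obtain both directions directly from \cref{p:helps}, after extracting from regularity, via the Silverman--Toeplitz characterization (\cref{f:ST}), exactly the hypotheses that \cref{p:helps} requires.

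For the implication ``\textbf{(C)} $\Rightarrow$ \textbf{(L)}'', note that regularity of $C$ gives \textbf{(ST2)} by \cref{f:ST}. Together with \textbf{(C)}, these are precisely the hypotheses of \cref{p:helps}\cref{p:helps1}, so \textbf{(L)} follows immediately.

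For the converse ``\textbf{(L)} $\Rightarrow$ \textbf{(C)}'', regularity gives \textbf{(ST1)}, that is,
\begin{equation*}
\sup_\nnn \sum_{k=0}^\infty |c_{n,k}| < \pinf .
\end{equation*}
In particular, $\sum_{k=0}^\infty |c_{n,k}|<\pinf$ for every $\nnn$. Combined with \textbf{(L)}, this is exactly the hypothesis of \cref{p:helps}\cref{p:helps2}, which then yields \textbf{(C)}.

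There is no real obstacle here. The only point needing care is to invoke the correct Silverman--Toeplitz condition for each direction: \textbf{(ST2)} for the first implication and \textbf{(ST1)} for the second. The condition \textbf{(ST3)} is not needed.
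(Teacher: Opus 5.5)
Your proposal is correct and matches the paper's proof. Regularity, via the Silverman--Toeplitz characterization, supplies \textbf{(ST2)} for \cref{p:helps}\cref{p:helps1}, and supplies \textbf{(ST1)}, hence finite row sums, for \cref{p:helps}\cref{p:helps2}.
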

\begin{proof}
``$\Rightarrow$'': This follows from \cref{p:helps}\cref{p:helps1}. 
``$\Leftarrow$'': Note that {\rm\textbf{(ST1)}} clearly implies 
that $(\forall\nnn)$ $\sum_{k=0}^\infty |c_{n,k}| < +\infty$. 
Hence this implication follows from 
\cref{p:helps}\cref{p:helps2}.
\end{proof}

We are now ready to record a result that will be convenient for future use:

\begin{theorem}
\label{cool}
Let $C = (c_{n,k})_{(n,k)\in\NN\times\NN}\in\RR^{\NN\times\NN}$ satisfy
the following:
\begin{subequations}
\begin{align}
&(\forall\nnn)(\forall\kkk)\quad c_{n,k}\geq 0,\label{cool1}\\
&(\forall\nnn)\quad \sum_{k=0}^\infty c_{n,k}=1,\label{cool2}\\
&(\forall\kkk)\quad\lim_{n\to\infty}c_{n,k}=0,\label{cool3}\\
&\lim_{n\to\infty}\sum_{k=0}^\infty |c_{n,k}-c_{n,k+1}| = 0.\label{cool4}
\end{align}
\end{subequations}
Let $T\colon X\to X$ be linear and nonexpansive. 
Then for every $x_0\in X$, we have 
\begin{equation}
\lim_{n\to\infty} \sum_{k=0}^{\infty} c_{n,k}T^kx_0 = P_{\Fix T}(x_0).
\end{equation}
\end{theorem}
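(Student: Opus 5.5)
The plan is to combine the Silverman--Toeplitz characterization (\cref{f:ST}), the equivalence of the Cohen and Lorentz conditions for regular matrices (\cref{c:CL}), Cohen's theorem (\cref{f:Cohen}), and the weak-limit identification of \cref{Cohenlim}.

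First, I will show that $C$ is regular by checking the three conditions of \cref{f:ST}.
By \cref{cool1} and \cref{cool2}, $\sum_{k}|c_{n,k}| = \sum_k c_{n,k} = 1$ for every $n$. This gives \textbf{(ST1)} with supremum equal to $1$, and it also gives \textbf{(ST3)}, since the row sums are identically $1$.
Condition \textbf{(ST2)} is exactly \cref{cool3}.
Hence $C$ is regular.

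Next, \cref{cool4} is precisely the Lorentz condition \textbf{(L)}. Since $C$ is regular, \cref{c:CL} shows that $C$ satisfies the Cohen condition \textbf{(C)}.
\cref{f:Cohen} then applies. For every $x_0\in X$, the sequence $y_n := \sum_{k=0}^\infty c_{n,k}T^kx_0$ converges strongly to some point of $\Fix T$. The series defining $y_n$ converges absolutely, because $\|c_{n,k}T^kx_0\|\le c_{n,k}\|x_0\|$ and the row sums are $1$.

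It remains to identify the limit. Strong convergence implies weak convergence, so $\mathrm{weak}\lim_n y_n$ exists and lies in $\Fix T$ for every $x_0$. This is hypothesis (iii) of \cref{Cohenlim}. Hypotheses (i) and (ii) of \cref{Cohenlim} are \cref{cool2} and \cref{cool1}.
Therefore \cref{Cohenlim} gives $y_n\weakly P_{\Fix T}(x_0)$. By uniqueness of weak limits, the strong limit must equal $P_{\Fix T}(x_0)$.

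There is no real obstacle here, since every step is an invocation of an earlier result. The only points needing care are the following:
\begin{itemize}
\item the operator $T_n=\sum_k c_{n,k}T^k$ used inside \cref{Cohenlim} is well defined and nonexpansive, which follows from the absolute convergence noted above and the fact that the weights are nonnegative and sum to $1$;
\item $\Fix T$ is a closed linear subspace, because $T$ is linear and continuous;
\item every $f\in\Fix T$ satisfies $T_nf=\sum_k c_{n,k}f=f$.
\end{itemize}
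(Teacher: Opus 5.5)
Your proposal is correct and follows the paper's proof essentially step for step: regularity via Silverman--Toeplitz, then Lorentz $\Rightarrow$ Cohen via \cref{c:CL}, then strong convergence to a fixed point by \cref{f:Cohen}, and finally identification of the limit through \cref{Cohenlim}. Your extra remarks on the absolute convergence of $\sum_k c_{n,k}T^kx_0$, the nonexpansiveness of $T_n$, and uniqueness of weak limits just make explicit details that the paper leaves implicit.
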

\begin{proof}
Note that 
\textbf{(ST1)} holds because of \cref{cool1} and \cref{cool2}.
Next, \cref{cool3} is exactly \textbf{(ST2)}.
Moreover, \cref{cool2} clearly yields \textbf{(ST3)}. 
By \cref{f:ST}, the matrix $C$ is regular.

Now \cref{cool4} is precisely the Lorentz condition \textbf{(L)}. 
It thus follows from \cref{c:CL} that the Cohen condition \textbf{(C)} holds as well.

Let $x_0\in X$. \cref{f:Cohen} implies that 
\begin{equation*}
\lim_{n\to\infty} \sum_{k=0}^{\infty} c_{n,k}T^kx_0 \in \Fix T. 
\end{equation*}
The conclusion now follows from \cref{Cohenlim}. 
\end{proof}

\begin{remark}[connections to works by Reich, and by Browder and Brezis] \ 
\begin{enumerate}
\item
Reich obtained (see \cite[Theorem~1.(b)]{Reich78})  
a strong convergence result that is more general than \cref{cool} in that it allows for 
\emph{nonlinear} nonexpansive operators.
His proof is somewhat more complicated, and the strong limit
is identified as the \emph{asymptotic center} of $(T^nx_0)_\nnn$. 
In contrast, our derivation of \cref{cool} is a consequence of 
the classical Cohen \cref{f:Cohen} combined with \cref{dalimit}, 
and we also provide a more explicit formula for the limit, namely $P_{\Fix T}(x_0)$.

\item
Brezis and Browder (see \cite[Theorem~1]{BB76} showed that 
if we replace in \cref{cool} the assumption 
\cref{cool4} by the less restrictive condition
\begin{equation}
\lim_{n\to\infty} \sum_{k=0}^\infty \max\{0,c_{n,k+1}-c_{n,k}\} = 0,
\end{equation}
then $(\sum_{k=0}^{\infty} c_{n,k}T^kx_0)_\nnn$ converges weakly to a fixed point of $T$. 
In fact, their proof does not require the linearity of $T$.
See also their follow up paper \cite{BB77}. 
\end{enumerate}
\end{remark}

\begin{corollary}
\label{kool}
Let $C = (c_{n,k})_{(n,k)\in\NN\times\NN}\in\RR^{\NN\times\NN}$ satisfy
the following:
\begin{subequations}
\begin{align}
&(\forall\nnn)(\forall\kkk)\quad c_{n,k}\geq 0,\label{kool1}\\
&(\forall\nnn)\quad \sum_{k=0}^\infty c_{n,k}=1,\label{kool2}\\
&(\forall\kkk)\quad\lim_{n\to\infty}c_{n,k}=0,\label{kool3}\\
&\lim_{n\to\infty}\sum_{k=0}^\infty 
\max\{0,c_{n,k+1}-c_{n,k}\} = 0.\label{kool4}
\end{align}
\end{subequations}
Let $T\colon X\to X$ be linear and nonexpansive. 
Then for every $x_0\in X$, we have 
\begin{equation}
\mathrm{weak}\lim_{n\to\infty} \sum_{k=0}^{\infty} c_{n,k}T^kx_0 = P_{\Fix T}(x_0).
\end{equation}
\end{corollary}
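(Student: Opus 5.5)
The plan is to combine the Brezis--Browder weak convergence theorem, recalled in the preceding remark, with \cref{Cohenlim}. The argument mirrors that of \cref{cool}, with Cohen's strong result (\cref{f:Cohen}) replaced by its weak counterpart.

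First, fix $x_0\in X$ and observe that the series $\sum_{k=0}^\infty c_{n,k}T^kx_0$ converges absolutely. Indeed, since $T$ is linear and nonexpansive, $\|T^kx_0\|\leq\|x_0\|$. Together with \cref{kool1} and \cref{kool2}, this gives
\begin{equation*}
\sum_{k}\|c_{n,k}T^kx_0\|\leq\|x_0\|.
\end{equation*}
For the same reason, $T_n:=\sum_{k=0}^\infty c_{n,k}T^k$ is a well-defined linear operator with $\|T_n\|\leq 1$. So each $T_n$ is nonexpansive, and clearly $\Fix T\subseteq\Fix T_n$ because $\sum_k c_{n,k}=1$.

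Second, I invoke \cite[Theorem~1]{BB76}. Its hypotheses are exactly \cref{kool1}--\cref{kool4}: nonnegative weights summing to one, columns tending to zero, and the one-sided variation condition. It yields that $\big(\sum_{k}c_{n,k}T^kx_0\big)_\nnn$ converges weakly to some point of $\Fix T$, and linearity is not even needed for this step. This is precisely hypothesis (iii) of \cref{Cohenlim}, while (i) and (ii) are \cref{kool2} and \cref{kool1}. Applying \cref{Cohenlim} (equivalently \cref{dalimit} with $F=\Fix T$, a closed linear subspace since $T$ is linear and continuous) then identifies the weak limit as $P_{\Fix T}(x_0)$.

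The main obstacle is ensuring that the Brezis--Browder theorem applies in exactly this formulation. In particular, one must check that their condition on the matrix matches \cref{kool4} together with \cref{kool3}, and that their conclusion is weak convergence of the full sequence to a fixed point, rather than only convergence of a subsequence. If their statement were phrased with slightly different normalization, I would first verify it directly. Here \cref{kool4} and $\sum_k c_{n,k}=1$ give $\sum_k|c_{n,k+1}-c_{n,k}|\le 2\sum_k\max\{0,c_{n,k+1}-c_{n,k}\}+c_{n,0}$, and \cref{kool3} forces $c_{n,0}\to0$. This is only a consistency check, since the bound does not by itself yield \textbf{(L)} when the downward variation is large. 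So I would rely on the cited theorem rather than reduce the statement to \cref{cool}.
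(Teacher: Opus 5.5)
Your proof is correct and is the paper's argument: \cite[Theorem~1]{BB76} gives weak convergence of $\bigl(\sum_k c_{n,k}T^kx_0\bigr)_\nnn$ to a point of $\Fix T$, and \cref{Cohenlim} identifies that point as $P_{\Fix T}(x_0)$.

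Your closing paragraph is wrong, however. Each row is nonnegative and summable, so $c_{n,k}\to 0$ as $k\to\infty$. Telescoping then gives $\sum_k (c_{n,k+1}-c_{n,k})=-c_{n,0}$, which means the downward variation equals the upward variation plus $c_{n,0}$ and cannot be large on its own. Your inequality is therefore the identity
\begin{equation*}
\sum_{k=0}^\infty|c_{n,k+1}-c_{n,k}| = 2\sum_{k=0}^\infty\max\{0,c_{n,k+1}-c_{n,k}\}+c_{n,0}.
\end{equation*}
Hence \cref{kool4}, together with \cref{kool3} for $k=0$, does yield \textbf{(L)}. \cref{cool} then gives even strong convergence to $P_{\Fix T}(x_0)$ with no appeal to Brezis--Browder, whose theorem is only needed when $T$ is nonlinear.
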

\begin{proof}
As we just pointed out, the weak convergence follows from \cite[Theorem~1]{BB76}. 
In turn, the limit formula is again a consequence of \cref{Cohenlim}. 
\end{proof}

\section{The Beta-Binomial Distribution}

\label{s:betabin}

In this section, we take a scenic detour to discuss the discrete beta-binomial 
distribution. 

We start by reviewing the beta and gamma functions. 

\begin{definition}[beta function and gamma function]
For $x>0$ and $y>0$, the \emph{beta function} is defined by 
\begin{equation}
\label{e:defB}
\Bf(x,y) := \int_0^1 t^{x-1}(1-t)^{y-1}\, dt
\end{equation}
while the \emph{gamma function} is defined by 
\begin{equation}
\Gamma(x) := \int_0^\infty t^{x-1}e^{-t}\,dt. 
\end{equation}
\end{definition}

The gamma function can also 
be expressed as 
\begin{equation}
\Gamma(x) = \lim_{k\to\infty}
\frac{k!k^{x-1}}{x(x+1)\cdots(x+k-1)}; 
\end{equation}
see, e.g., \cite[Definition~1.1.1 and Corollary~1.1.5]{AAR}.

The following properties of the beta and gamma functions are known. 
\begin{fact}
\label{f:BG}
Let $x>0$, $y>0$, and $m,n\in\{1,2,\ldots\}$ with $m\leq n$. Then:
\begin{enumerate}
\item
\label{f:BG1}
$\Bf(x,y) = \Bf(y,x)$. 
\item
\label{f:BG2}
$\Bf(x,y) = \Gamma(x)\Gamma(y)/\Gamma(x+y)$.
\item
\label{f:BG3}
$\Gamma(x+1)=x\Gamma(x)$.
\item
\label{f:BGnew}
$\prod_{k=m}^n (k+x) = {\Gamma(n+1+x)}/{\Gamma(m+x)}$.
\item 
\label{f:BG4}
$\Gamma(1)=1$ and $\Gamma(n+1)=n!$
\item 
\label{f:BG5}
$\Gamma(n+\tfrac{1}{2}) = \sqrt{\pi}\cdot (2n)!/(4^n n!)$.
\end{enumerate}
\end{fact}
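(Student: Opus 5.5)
These properties are classical, so the plan is to derive each from the integral definitions and the limit formula, citing \cite{AAR} where convenient. For \cref{f:BG1}, I will substitute $t\mapsto 1-t$ in \cref{e:defB}. For \cref{f:BG3}, I will integrate by parts in the definition of $\Gamma(x+1)$: the boundary term $[-t^{x}e^{-t}]_0^\infty$ vanishes because $x>0$, which leaves $x\int_0^\infty t^{x-1}e^{-t}\,dt$. Then \cref{f:BG4} follows from the direct computation $\Gamma(1)=\int_0^\infty e^{-t}\,dt=1$ together with induction on $n$ via \cref{f:BG3}.

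For \cref{f:BGnew}, I will iterate \cref{f:BG3} to obtain
\begin{equation*}
\Gamma(n+1+x)=(n+x)(n-1+x)\cdots(m+x)\,\Gamma(m+x).
\end{equation*}
This is valid since $m+x>0$, so every argument lies in the domain of $\Gamma$. Dividing by $\Gamma(m+x)>0$ gives the claim; positivity of $\Gamma$ is immediate from the integrand being positive.

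The main step is \cref{f:BG2}. I will write $\Gamma(x)\Gamma(y)=\int_0^\infty\!\int_0^\infty s^{x-1}u^{y-1}e^{-(s+u)}\,ds\,du$, which is legitimate by Tonelli since the integrand is nonnegative. I will then change variables with $s=rt$, $u=r(1-t)$, $r\in(0,\infty)$, $t\in(0,1)$, whose Jacobian is $r$. The integral factors as
\begin{equation*}
\int_0^\infty r^{x+y-1}e^{-r}\,dr\cdot\int_0^1 t^{x-1}(1-t)^{y-1}\,dt=\Gamma(x+y)\Bf(x,y).
\end{equation*}
The only care needed is justifying the change of variables for improper integrals, which again follows from nonnegativity via Tonelli and monotone convergence.

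Finally, for \cref{f:BG5}, I will first show $\Gamma(\tfrac12)=\sqrt\pi$. The cleanest route is \cref{f:BG2} with $x=y=\tfrac12$, which gives $\Gamma(\tfrac12)^2=\Bf(\tfrac12,\tfrac12)\Gamma(1)=\int_0^1 \frac{dt}{\sqrt{t(1-t)}}$. The substitution $t=\sin^2\theta$ evaluates this to $\pi$, and taking the positive root is justified since $\Gamma>0$. Then \cref{f:BGnew} with $x=\tfrac12$, $m=1$ (or direct iteration of \cref{f:BG3}) gives $\Gamma(n+\tfrac12)=\prod_{k=0}^{n-1}(k+\tfrac12)\,\Gamma(\tfrac12)=\frac{(2n-1)!!}{2^n}\sqrt\pi$. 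The closing identity $(2n-1)!!=(2n)!/(2^n n!)$ yields $\sqrt\pi\,(2n)!/(4^n n!)$. For $n\ge1$ everything is routine; the only place requiring genuine analysis is the Fubini/change-of-variables step in \cref{f:BG2}.
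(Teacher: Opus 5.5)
Your proposal is correct. For items~\ref{f:BG1} and~\ref{f:BGnew} you argue exactly as the paper does: the substitution $t\mapsto 1-t$, and iterating $\Gamma(z+1)=z\Gamma(z)$ from $z=m+x$ up to $z=n+x$. For items~\ref{f:BG2}, \ref{f:BG3} and~\ref{f:BG4} the paper only cites the textbook of Andrews, Askey and Roy. Your integration by parts, and your substitution $s=rt$, $u=r(1-t)$ justified by Tonelli, are the standard derivations behind those citations, so there you simply make the argument self-contained.

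The genuinely different step is item~\ref{f:BG5}. The paper obtains it from Legendre's duplication formula $\Gamma(z)\Gamma(z+\tfrac12)=2^{1-2z}\sqrt{\pi}\,\Gamma(2z)$ at $z=n$, together with the quoted value $\Gamma(\tfrac12)=\sqrt{\pi}$. You instead derive $\Gamma(\tfrac12)^2=\Bf(\tfrac12,\tfrac12)\Gamma(1)=\pi$ from item~\ref{f:BG2} via $t=\sin^2\theta$, iterate item~\ref{f:BG3} $n$ times, and finish with $(2n-1)!!=(2n)!/(2^n n!)$. Your route uses only items proved earlier in the same list, so the whole Fact becomes independent of outside references. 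The paper's route is a one-line citation, but it invokes a deeper identity whose full strength (non-integer $z$) is not needed here. At integer $z=n$, once $\Gamma(\tfrac12)=\sqrt{\pi}$ is known, the duplication formula is equivalent to exactly your computation.

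One small slip: item~\ref{f:BGnew} with $x=\tfrac12$, $m=1$ and upper index $n$ gives $\Gamma(n+\tfrac32)/\Gamma(\tfrac32)$, not your displayed product $\prod_{k=0}^{n-1}(k+\tfrac12)$. That item requires $m\geq 1$, so it cannot supply the factor $k=0$. Rely on the direct iteration of item~\ref{f:BG3}, as your parenthetical suggests. Alternatively, use upper index $n-1$ for $n\geq 2$ together with $\Gamma(\tfrac32)=\tfrac12\Gamma(\tfrac12)$.
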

\begin{proof}
\cref{f:BG1}: Change variables via $s=1-t$. 
\cref{f:BG2}: \cite[Theorem~1.1.4]{AAR}. 
\cref{f:BG3}: \cite[Equation~(1.1.6)]{AAR}. 
\cref{f:BGnew}: 
Using \cref{f:BG3}, we have 
$\Gamma(n+1+x) = \Gamma(n+x)(n+x) = \cdots = \Gamma(m+x)\prod_{k=m}^n (k+x)$. 
\cref{f:BG4}: \cite[Equations~(1.1.7) and (1.1.8)]{AAR}. 
\cref{f:BG5}: This follows from the duplication formula 
for the Gamma function (see \cite[Theorem~1.5.1]{AAR}) combined with 
with the fact that $\Gamma(\tfrac{1}{2}) = \sqrt{\pi}$ (see \cite[(1.1.22)]{AAR}) 
\end{proof}

\begin{proposition}
Let $x>0$, $y>0$, and $z>1$. Then
\begin{subequations}
\begin{align}
\Bf(x+1,y) &= \frac{x}{x+y}\,\Bf(x,y), \label{e:B1}\\
\Bf(x,y+1) &= \frac{y}{x+y}\,\Bf(x,y), \label{e:B2}\\
\frac{\Bf(x+1,z-1)}{\Bf(x,z)} &= \frac{x}{z-1}. \label{e:B3}
\end{align}
\end{subequations}
\end{proposition}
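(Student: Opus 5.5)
All three identities follow by expressing the beta function through the gamma function via \cref{f:BG}\cref{f:BG2} and then using the recurrence \cref{f:BG}\cref{f:BG3}. No integration is needed, although \cref{e:B1} and \cref{e:B2} could also be obtained directly from \cref{e:defB} by integration by parts.

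For \cref{e:B1}, we write
\begin{equation*}
\Bf(x+1,y)=\frac{\Gamma(x+1)\Gamma(y)}{\Gamma(x+y+1)}
=\frac{x\Gamma(x)\Gamma(y)}{(x+y)\Gamma(x+y)}=\frac{x}{x+y}\,\Bf(x,y),
\end{equation*}
where the middle equality applies \cref{f:BG}\cref{f:BG3} to both $\Gamma(x+1)$ and $\Gamma(x+y+1)$. All arguments are positive, so the gamma values are finite and nonzero and the manipulations are legitimate. Then \cref{e:B2} follows from \cref{e:B1} by symmetry: by \cref{f:BG}\cref{f:BG1},
\begin{equation*}
\Bf(x,y+1)=\Bf(y+1,x)=\frac{y}{x+y}\Bf(y,x)=\frac{y}{x+y}\Bf(x,y).
\end{equation*}

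For \cref{e:B3}, the hypothesis $z>1$ guarantees $z-1>0$, so $\Bf(x+1,z-1)$ is defined. We compute
\begin{equation*}
\frac{\Bf(x+1,z-1)}{\Bf(x,z)}=\frac{\Gamma(x+1)\Gamma(z-1)/\Gamma(x+z)}{\Gamma(x)\Gamma(z)/\Gamma(x+z)}
=\frac{x\Gamma(x)\Gamma(z-1)}{\Gamma(x)(z-1)\Gamma(z-1)}=\frac{x}{z-1}.
\end{equation*}
Here the factor $\Gamma(x+z)$ cancels because $(x+1)+(z-1)=x+z$, and we have used $\Gamma(z)=(z-1)\Gamma(z-1)$, which is \cref{f:BG}\cref{f:BG3} applied at $z-1>0$.

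Alternatively, \cref{e:B3} follows from the first two identities alone. Applying \cref{e:B1} with $y=z-1$ gives $\Bf(x+1,z-1)=\tfrac{x}{x+z-1}\Bf(x,z-1)$, and applying \cref{e:B2} with $y=z-1$ gives $\Bf(x,z)=\tfrac{z-1}{x+z-1}\Bf(x,z-1)$. Dividing the first by the second yields $x/(z-1)$.

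There is no real obstacle in this argument. The only point requiring care is checking that every gamma argument stays positive, which is exactly where the assumption $z>1$ is used.
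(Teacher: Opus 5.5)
Your proposal is correct and follows the paper's proof essentially line for line: \cref{e:B1} via the gamma representation and the recurrence, then \cref{e:B2} via symmetry. Your ``alternative'' derivation of \cref{e:B3} from the first two identities is exactly the paper's argument, and your direct gamma computation for \cref{e:B3} is an equally valid minor variant.
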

\begin{proof}
\cref{e:B1}: 
Using \cref{f:BG}\cref{f:BG2}\&\cref{f:BG3}, 
we have
\begin{align*}
\Bf(x+1,y)
&= \frac{\Gamma(x+1)\Gamma(y)}{\Gamma(x+1+y)}
= \frac{x\Gamma(x)\Gamma(y)}{(x+y)\Gamma(x+y)}
= \frac{x}{x+y}\Bf(x,y),
\end{align*}
as claimed. 

\cref{e:B2}: 
Using \cref{f:BG}\cref{f:BG1} and \cref{e:B1},
we obtain 
\begin{align*}
\Bf(x,y+1) = \Bf(y+1,x) = \frac{y}{y+x}\Bf(y,x)=\frac{y}{x+y}\Bf(x,y),
\end{align*}
as needed.

\cref{e:B3}: 
Using \cref{e:B1} and \cref{e:B2}, 
we get 
\begin{align*}
\frac{\Bf(x+1,z-1)}{\Bf(x,z)}
&= \frac{x\Bf(x,z-1)}{(x+z-1)\Bf(x,(z-1)+1)}
= \frac{x\Bf(x,z-1)(x+z-1)}{(x+z-1)(z-1)\Bf(x,z-1)}
= \frac{x}{z-1},
\end{align*}
and we're done.
\end{proof}

For the remainder of this section, we let 
\begin{subequations}
\label{e:bnk}
\begin{empheq}[box=\mybluebox]{equation}
\beta > 1.
\end{empheq}
For every $n\in\ZZ$ and $k\in\ZZ$, 
\begin{empheq}[box=\mybluebox]{equation}
\label{e:bnk'}
 b_{n,k}:=\binom{n}{k}\frac{\Bf(k+\beta,n-k+\beta)}{\Bf(\beta,\beta)} >0,
 \quad
 \text{when $n\in\NN$ and $k\in\{0,1,\ldots,n\}$},
\end{empheq}
and 
\begin{empheq}[box=\mybluebox]{equation}
b_{n,k} := 0, \quad
\text{
when $n\in\ZZ\smallsetminus\NN$ or $k<0$ or $k>n$. 
}
\end{empheq}
\end{subequations}

The next result implies 
that the matrix $(b_{n,k})_{(n,k)\in\NN\times\NN}$ 
satisfies \textbf{(ST3)} and \textbf{(ST1)}.

\begin{lemma}
\label{l:b1}
For every $\nnn$, we have $\displaystyle\sum_{k\in\NN}b_{n,k}=1$. 
\end{lemma}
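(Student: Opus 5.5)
The plan is to use the integral representation \cref{e:defB} of the beta function and then the binomial theorem. Fix $n\in\NN$. By definition, $b_{n,k}=0$ unless $k\in\{0,1,\ldots,n\}$, so the sum over $k\in\NN$ is the finite sum $\sum_{k=0}^n b_{n,k}$.

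For $k\in\{0,\ldots,n\}$ we have $k+\beta>0$ and $n-k+\beta>0$, so \cref{e:defB} gives
\begin{equation*}
\Bf(k+\beta,n-k+\beta)=\int_0^1 t^{k+\beta-1}(1-t)^{n-k+\beta-1}\,dt .
\end{equation*}
Multiply by $\binom{n}{k}$ and sum over $k$. Because the sum is finite, it can be interchanged with the integral. This yields
\begin{equation*}
\sum_{k=0}^n\binom{n}{k}\Bf(k+\beta,n-k+\beta)
=\int_0^1 t^{\beta-1}(1-t)^{\beta-1}\sum_{k=0}^n\binom{n}{k}t^k(1-t)^{n-k}\,dt .
\end{equation*}
By the binomial theorem, the inner sum equals $(t+(1-t))^n=1$. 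The right-hand side is therefore $\int_0^1 t^{\beta-1}(1-t)^{\beta-1}\,dt=\Bf(\beta,\beta)$.

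Dividing by $\Bf(\beta,\beta)$, which is positive and finite since $\beta>1>0$, gives $\sum_{k\in\NN}b_{n,k}=1$. The case $n=0$ is consistent with this, since then the only term is $b_{0,0}=1$.

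No step here is a real obstacle. The only points needing care are that the interchange of sum and integral is justified because the sum is finite, and that all beta arguments are positive.

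An alternative would be induction on $n$, using the recurrences \cref{e:B1} and \cref{e:B2} together with Pascal's rule. The integral route above is shorter, so I would use that.
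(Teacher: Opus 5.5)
Your proposal is correct and follows essentially the same route as the paper. Both arguments write each $\Bf(k+\beta,n-k+\beta)$ via the integral \cref{e:defB}, move the finite sum inside the integral, collapse it with the binomial theorem, and recognize $\Bf(\beta,\beta)$.
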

\begin{proof}
Let $\nnn$. Then
\begin{align}
\sum_{k\in\NN}b_{n,k}
&= 
\sum_{k=0}^n b_{n,k}
= \sum_{k=0}^n \binom{n}{k}\frac{\Bf(k+\beta,n-k+\beta)}{\Bf(\beta,\beta)}
\tag{by \cref{e:bnk}}\\
&= \frac{1}{\Bf(\beta,\beta)}
\sum_{k=0}^n \binom{n}{k} \int_0^1 t^{k+\beta-1}(1-t)^{n-k+\beta-1}\, dt
\tag{by \cref{e:defB}}\\
&= 
\frac{1}{\Bf(\beta,\beta)}
\int_0^1 t^{\beta-1}(1-t)^{\beta-1}
\sum_{k=0}^n \binom{n}{k} t^k(1-t)^{n-k}\,dt \notag\\
&= \frac{1}{\Bf(\beta,\beta)}
   \int_0^1 t^{\beta-1}(1-t)^{\beta-1}\,dt 
   \tag{by the binomial theorem}\\
&= 1, \tag{by \cref{e:defB}}
\end{align}
as claimed.
\end{proof}

Now we derive a row bound for 
the matrix $(b_{n,k})_{(n,k)\in\NN\times\NN}$ 
that will imply \textbf{(ST2)}.

\begin{lemma}[row bound]
\label{l:b2}
We have 
\begin{equation}
(\forall\nnn)\quad 
\max_{k\in\ZZ} b_{n,k} \leq \frac{4^{1-\beta}}{\Bf(\beta,\beta)}
\cdot \frac{1}{n+1};
\end{equation}
consequently,
\begin{equation}
(\forall k\in\ZZ)\quad \lim_{n\to\infty} b_{n,k} = 0. 
\end{equation}
\end{lemma}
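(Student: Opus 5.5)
The plan is to bound the integral representation of $b_{n,k}$ directly, pulling out the ``beta weight'' $t^{\beta-1}(1-t)^{\beta-1}$ by its supremum. Cases where $b_{n,k}=0$ (i.e.\ $k<0$ or $k>n$) are trivial, since the right-hand side is positive. So fix $\nnn$ and $k\in\{0,1,\ldots,n\}$. Exactly as in the proof of \cref{l:b1}, use \cref{e:defB} to write
\begin{equation*}
b_{n,k} = \frac{1}{\Bf(\beta,\beta)}\int_0^1 \big(t(1-t)\big)^{\beta-1}\binom{n}{k}t^k(1-t)^{n-k}\,dt .
\end{equation*}

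The key step uses that $\beta>1$, so the exponent $\beta-1$ is positive. Since $t(1-t)\le \tfrac14$ on $[0,1]$, it follows that $\big(t(1-t)\big)^{\beta-1}\le 4^{1-\beta}$. Pulling this constant out leaves
\begin{equation*}
\binom{n}{k}\int_0^1 t^k(1-t)^{n-k}\,dt .
\end{equation*}
This equals $\binom{n}{k}\Bf(k+1,n-k+1)$. By \cref{f:BG}\cref{f:BG2}\&\cref{f:BG4}, that is $\binom{n}{k}\,\frac{k!(n-k)!}{(n+1)!}=\frac{1}{n+1}$. Combining these gives $b_{n,k}\le \frac{4^{1-\beta}}{\Bf(\beta,\beta)}\cdot\frac{1}{n+1}$ for every $k$, hence also for the maximum over $k\in\ZZ$.

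For the consequence, fix $k\in\ZZ$. Then $0\le b_{n,k}\le \frac{4^{1-\beta}}{\Bf(\beta,\beta)(n+1)}\to 0$ as $n\to\infty$, and the claim follows by squeezing.

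There is no real obstacle here. The only point needing care is that the bound $(t(1-t))^{\beta-1}\le 4^{1-\beta}$ relies on $\beta>1$; for $\beta<1$ this factor would be unbounded near the endpoints. The other point is the evaluation of the integral $\binom{n}{k}\Bf(k+1,n-k+1)=1/(n+1)$ via the gamma-function identities.
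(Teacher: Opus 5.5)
Your proposal is correct and follows essentially the same route as the paper. Both bound the weight $t^{\beta-1}(1-t)^{\beta-1}$ by $4^{1-\beta}$ and evaluate $\binom{n}{k}\Bf(k+1,n-k+1)=\tfrac{1}{n+1}$, then conclude by squeezing. The only cosmetic difference is that you obtain the supremum from $t(1-t)\le\tfrac14$ together with $\beta-1>0$, whereas the paper differentiates $\phi$ to locate its maximum at $t=\tfrac12$.
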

\begin{proof}
Define 
\begin{equation}
\phi\colon [0,1]\to\RR\colon t\mapsto t^{\beta-1}(1-t)^{\beta-1}.
\label{e:phi}
\end{equation}
It's clear that $\phi$ is continuous. 
Because 
\begin{equation*}
\phi'(t) = (\beta-1)t^{\beta-2}(1-t)^{\beta-2}(1-2t)
\end{equation*}
on $\left]0,1\right[$, we see that
$\phi$ is \emph{increasing} on $[0,1/2]$ and
\emph{decreasing} on $[1/2,1]$.
Hence
\begin{equation}
\label{e:maxphi}
\max_{t\in[0,1]}\phi(t) = \phi(1/2) = (1/2)^{2(\beta-1)} = 4^{1-\beta}.
\end{equation}
Now let $\nnn$ and $k\in\{0,1,\ldots,n\}$. 
Then 
\begin{align}
b_{n,k}
&=
\binom{n}{k}\frac{\Bf(k+\beta,n-k+\beta)}{\Bf(\beta,\beta)}
\tag{by \cref{e:bnk}}\\
&=
\frac{1}{\Bf(\beta,\beta)}
\binom{n}{k} \int_0^1 t^{k+\beta-1}(1-t)^{n-k+\beta-1}\, dt
\tag{by \cref{e:defB}}\\
&=
\frac{1}{\Bf(\beta,\beta)}
\binom{n}{k} \int_0^1 t^{k}(1-t)^{n-k}\phi(t)\, dt
\tag{by \cref{e:phi}}\\
&\leq 
\frac{4^{1-\beta}}{\Bf(\beta,\beta)}
\binom{n}{k} \int_0^1 t^{k}(1-t)^{n-k}\, dt
\tag{by \cref{e:maxphi}}\\
&= 
\frac{4^{1-\beta}}{\Bf(\beta,\beta)}
\binom{n}{k} \Bf(k+1,n-k+1)
\tag{by \cref{e:defB}}\\
&= 
\frac{4^{1-\beta}}{\Bf(\beta,\beta)}
\binom{n}{k} 
\frac{\Gamma(k+1)\Gamma(n-k+1)}{\Gamma(n+2)}
\tag{by \cref{f:BG}\cref{f:BG2}}\\
&= 
\frac{4^{1-\beta}}{\Bf(\beta,\beta)}
\cdot 
\frac{n!}{k!(n-k)!}\cdot 
\frac{k!(n-k)!}{(n+1)!}
\tag{by \cref{f:BG}\cref{f:BG4}}\\
&= 
\frac{4^{1-\beta}}{\Bf(\beta,\beta)}
\cdot\frac{1}{n+1},
\notag
\end{align}
as announced.
The ``Consequently'' part is now clear.
\end{proof}

We have shown that 
the matrix $(b_{n,k})_{(n,k)\in\NN\times\NN}$ is regular. 
This matrix also enjoys the pleasing 
symmetry and unimodality properties: 

\begin{lemma}[symmetry and unimodality]
\label{l:b3}
For every $\nnn$, we have the symmetry
\begin{equation}
\label{e:bsymm}
(\forall k\in\ZZ)\quad b_{n,k}=b_{n,n-k}.
\end{equation}
Moreover, setting $m_n := \lfloor n/2\rfloor$, we have
the following unimodality:

\begin{subequations}
\begin{align}
0<b_{n,0}<b_{n,1}<\cdots <b_{n,m_n-1}<b_{n,m_n}
&>b_{n,m_n+1}>\cdots>b_{n,n}>0,
\quad
\text{if $n$ is even;}\label{e:ueven}\\
0<b_{n,0}<b_{n,1}<\cdots<b_{n,m_n-1}<b_{n,m_n}&=b_{n,m_n+1}>\cdots>b_{n,n}>0,
\quad
\text{if $n$ is odd.}\label{e:uodd}
\end{align}
\end{subequations}

\end{lemma}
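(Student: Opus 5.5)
Symmetry will come first, directly from the definition. For $n\in\NN$ and $k\in\{0,\ldots,n\}$ we have $n-k\in\{0,\ldots,n\}$ and $\binom{n}{n-k}=\binom{n}{k}$. Using \cref{f:BG}\cref{f:BG1},
\[
\Bf(n-k+\beta,k+\beta)=\Bf(k+\beta,n-k+\beta),
\]
so $b_{n,n-k}=b_{n,k}$. For $k<0$ or $k>n$ we have $n-k>n$ or $n-k<0$ respectively, so both sides vanish. This gives \cref{e:bsymm}.

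For unimodality I will compute the ratio of consecutive entries. Fix $n\geq1$ and $k\in\{0,\ldots,n-1\}$. From the binomial coefficients, $\binom{n}{k+1}/\binom{n}{k}=(n-k)/(k+1)$. For the beta factor, apply \cref{e:B3} with $x=k+\beta>0$ and $z=n-k+\beta>1$ (valid since $n-k\geq1$ and $\beta>1$) to get
\[
\frac{\Bf(k+1+\beta,n-k-1+\beta)}{\Bf(k+\beta,n-k+\beta)}=\frac{k+\beta}{n-k-1+\beta}.
\]
Hence
\[
\frac{b_{n,k+1}}{b_{n,k}}=\frac{(n-k)(k+\beta)}{(k+1)(n-k-1+\beta)}.
\]
All entries are positive by \cref{e:bnk'}, so it suffices to compare $(n-k)(k+\beta)$ with $(k+1)(n-k-1+\beta)$. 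Expanding, the difference is
\[
(n-k)(k+\beta)-(k+1)(n-k-1+\beta)=(\beta-1)(n-2k-1).
\]
I expect this expansion to be the one step that needs care, and it is just algebra. Since $\beta>1$, the ratio is $>1$ iff $k<(n-1)/2$, $=1$ iff $k=(n-1)/2$, and $<1$ iff $k>(n-1)/2$.

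It remains to translate this into the two parity cases. If $n$ is even, then $(n-1)/2$ is not an integer. Strict increase holds for $k\leq m_n-1$, giving $b_{n,0}<\cdots<b_{n,m_n}$, and strict decrease holds for $k\geq m_n$, giving $b_{n,m_n}>\cdots>b_{n,n}$. That is \cref{e:ueven}. If $n$ is odd, then $m_n=(n-1)/2$. We get strict increase up to $b_{n,m_n}$, equality $b_{n,m_n}=b_{n,m_n+1}$, and strict decrease afterwards, which is \cref{e:uodd}. The outer inequalities $b_{n,0}>0$ and $b_{n,n}>0$ are just the positivity in \cref{e:bnk'}. The case $n=0$ is trivial, since there $b_{0,0}=1$ by \cref{l:b1}.
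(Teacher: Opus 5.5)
Your proof is correct and follows essentially the same route as the paper. You get symmetry from $\binom{n}{k}=\binom{n}{n-k}$ and $\Bf(x,y)=\Bf(y,x)$, then unimodality from the ratio $b_{n,k+1}/b_{n,k}$ via \cref{e:B3} and the identity $(n-k)(k+\beta)-(k+1)(n-k-1+\beta)=(\beta-1)(n-1-2k)$; the only cosmetic difference is that you read off the decreasing half directly from the sign of this expression, whereas the paper derives it from the increasing half combined with the symmetry \cref{e:bsymm}.
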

\begin{proof}
Let $\nnn$.
When $k<0$ or $k>n$, then \cref{e:bsymm} 
turns into $0=0$; when 
$k\in\{0,\ldots,n\}$, then 
the symmetry follows from the symmetry of the binomial coefficients 
and of $\Bf$ (see \cref{f:BG}\cref{f:BG1}). 
We've verified \cref{e:bsymm}.

Next, fix temporarily $k\in\{0,1,\ldots,n-1\}$. 
Then 
\begin{align*}
\frac{b_{n,k+1}}{b_{n,k}}
&= 
\frac{\binom{n}{k+1}}{\binom{n}{k}}
\cdot
\frac{\Bf(k+1+\beta,n-k-1+\beta)}{\Bf(k+\beta,n-k+\beta)} 
\tag{by \cref{e:bnk}}\\
&= \frac{n!}{(k+1)!(n-k-1)!}
\cdot
\frac{k!(n-k)!}{n!}
\cdot
\frac{k+\beta}{n-k-1+\beta}
\tag{by \cref{e:B3}}\\
&= 
\frac{n-k}{k+1}\cdot
\frac{k+\beta}{n-k-1+\beta}.
\end{align*}
It follows that 
\begin{subequations}
\begin{align}
b_{n,k}<b_{n,k+1}
&\Leftrightarrow 
1 < \frac{b_{n,k+1}}{b_{n,k}}
\Leftrightarrow 
(k+1)(n-k-1+\beta)<(n-k)(k+\beta)
\end{align}
and 
\begin{align}
b_{n,k}=b_{n,k+1}
&\Leftrightarrow 
(k+1)(n-k-1+\beta)=(n-k)(k+\beta).
\end{align}
\end{subequations}
On the other hand,
$(n-k)(k+\beta)-(k+1)(n-k-1+\beta)= (\beta-1)(n-1-2k)$.
Altogether, because $\beta>1$, we deduce 
\begin{subequations}
\label{e:unim}
\begin{equation}
\label{e:unim<}
b_{n,k}<b_{n,k+1}
\Leftrightarrow
n-1-2k>0
\Leftrightarrow
k<\frac{n-1}{2}
\end{equation}
and 
\begin{equation}
\label{e:unim=}
b_{n,k}=b_{n,k+1}
\Leftrightarrow
k=\frac{n-1}{2}.
\end{equation}
\end{subequations}

\emph{Case~1:} $n$ is even.\\
Then $m_n = n/2$, $(n-1)/2\notin\ZZ$, 
and $\lfloor (n-1)/2\rfloor = (n-2)/2 = m_n-1$. 
Hence \cref{e:unim<} yields

\begin{equation}
0 < b_{n,0}<b_{n,1}<\cdots < b_{n,m_n-1}<b_{n,m_n}>b_{n,m_n+1};
\end{equation}

combining with the symmetry \cref{e:bsymm},
we obtain \cref{e:ueven}.

\emph{Case~2:} $n$ is odd.\\
Then $m_n = (n-1)/2\in\NN$, 
and \cref{e:unim} yields

\begin{equation}
0 < b_{n,0}<b_{n,1}<\cdots< b_{n,m_n-1}<b_{n,m_n}=b_{n,m_n+1};
\end{equation}

combining with the symmetry \cref{e:bsymm},
we obtain \cref{e:uodd}.
\end{proof}

\begin{lemma}[Lorentz condition]
\label{l:b4}
For every $\nnn$, set $m_n := \lfloor n/2\rfloor$. 
Then 
\begin{equation}
\sum_{k\in\NN} |b_{n,k}-b_{n,k+1}| = 2b_{n,m_n}-b_{n,0} < 
\frac{4^{1-\beta}}{\Bf(\beta,\beta)}\cdot \frac{2}{n+1} 
\to 0 \quad\text{as $n\to\infty$.}
\end{equation}
\end{lemma}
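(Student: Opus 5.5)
We plan to evaluate the sum exactly using the unimodality and symmetry of \cref{l:b3}, and then bound the result with the row bound of \cref{l:b2}. Fix $\nnn$ and note that $b_{n,k}=0$ for $k>n$, so the sum over $k\in\NN$ is really over $k\in\{0,1,\ldots,n\}$. For $k=n$ the term is $|b_{n,n}-0|=b_{n,n}$, and all terms with $k>n$ vanish. On each monotone stretch the absolute values telescope. On the increasing part $k\in\{0,\ldots,m_n-1\}$ we get $\sum_{k=0}^{m_n-1}(b_{n,k+1}-b_{n,k})=b_{n,m_n}-b_{n,0}$.

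For the decreasing part we distinguish the two parities. If $n$ is even, the stretch $k\in\{m_n,\ldots,n-1\}$ contributes $b_{n,m_n}-b_{n,n}$. If $n$ is odd, the term $k=m_n$ is zero because $b_{n,m_n}=b_{n,m_n+1}$, and the stretch $k\in\{m_n+1,\ldots,n-1\}$ contributes $b_{n,m_n+1}-b_{n,n}=b_{n,m_n}-b_{n,n}$. In both cases we then add the final term $b_{n,n}$. The total is $(b_{n,m_n}-b_{n,0})+(b_{n,m_n}-b_{n,n})+b_{n,n}=2b_{n,m_n}-b_{n,0}$, as claimed.

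The degenerate case $n=0$ needs a separate check: there $m_0=0$ and the sum is $|b_{0,0}-0|=b_{0,0}=1=2b_{0,0}-b_{0,0}$, so the formula still holds. For $n=1$ both entries equal $1/2$, and the same bookkeeping goes through because the increasing stretch is empty. The only care needed is with these empty index ranges.

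For the inequality, \cref{l:b2} gives $b_{n,m_n}\le \frac{4^{1-\beta}}{\Bf(\beta,\beta)}\cdot\frac{1}{n+1}$, while $b_{n,0}>0$ by \cref{e:bnk'}. Hence $2b_{n,m_n}-b_{n,0}<2b_{n,m_n}\le \frac{4^{1-\beta}}{\Bf(\beta,\beta)}\cdot\frac{2}{n+1}$, and this bound tends to $0$ as $n\to\infty$. There is no real obstacle here. The main point requiring attention is the odd case with its plateau, together with remembering the boundary term $k=n$, which pairs $b_{n,n}$ with the zero entry $b_{n,n+1}$.
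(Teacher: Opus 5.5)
Your proposal is correct and follows essentially the same route as the paper: you telescope the absolute differences along the monotone stretches given by the symmetry and unimodality of \cref{l:b3}, handle the plateau $b_{n,m_n}=b_{n,m_n+1}$ in the odd case, add the boundary term $b_{n,n}$, and then get the strict bound from \cref{l:b2} together with $b_{n,0}>0$. Your explicit check of $n\in\{0,1\}$, where some index ranges are empty, is a small extra bit of care that the paper leaves implicit.
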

\begin{proof}
First, the announced inequality is clear from \cref{l:b2}.
Secondly, we set 
\begin{align*}
\sigma_n 
&:= \sum_{k\in\NN} |b_{n,k+1}-b_{n,k}|
=\Big(\sum_{k=0}^{n-1}|b_{n,k}-b_{n,k+1}|\Big)
+ |b_{n,n}-0| 
= \Big(\sum_{k=0}^{n-1}|b_{n,k}-b_{n,k+1}|\Big) + b_{n,n}.
\end{align*}

\emph{Case~1:} $n$ is even.\\
Then \cref{e:ueven} yields 
\begin{align*}
\sigma_n 
&= \Big(\sum_{k=0}^{m_n-1}(b_{n,k+1}-b_{n,k})\Big)
+ \Big(\sum_{k=m_n}^{n-1}(b_{n,k}-b_{n,k+1})\Big) + b_{n,n}
= \big(b_{n,m_n}-b_{n,0}\big)
+\big(b_{n,m_n}-b_{n,n}\big) + b_{n,n}\\
&= 2b_{n,m_n}-b_{n,0},
\end{align*}
as claimed.

\emph{Case~2:} $n$ is odd.\\
Then \cref{e:uodd} implies $b_{n,m_n}=b_{n,m_n+1}$ and 
\begin{align*}
\sigma_n 
&= \Big(\sum_{k=0}^{m_n-1}(b_{n,k+1}-b_{n,k})\Big)
+ \Big(\sum_{k=m_n+1}^{n-1}(b_{n,k}-b_{n,k+1})\Big) + b_{n,n}
= \big(b_{n,m_n}-b_{n,0}\big)
+\big(b_{n,m_n+1}-b_{n,n}\big) + b_{n,n}\\
&= 2b_{n,m_n}-b_{n,0},
\end{align*}
and we're done.
\end{proof}

Let us summarize the results obtained so far 
in the following: 

\begin{theorem}
Recall our assumptions in \cref{e:bnk} in this section. 
Let $T\colon X\to X$ be linear and nonexpansive, and 
let $x_0\in X$. 
Then 
\begin{equation}
\lim_{n\to\infty} \sum_{k=0}^{\infty} b_{n,k}T^kx_0 = P_{\Fix T}(x_0).
\end{equation}
\end{theorem}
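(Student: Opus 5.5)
The plan is to apply \cref{cool} directly with $C := (b_{n,k})_{(n,k)\in\NN\times\NN}$. Since that theorem already gives the strong limit $P_{\Fix T}(x_0)$ for any linear nonexpansive $T$, it only remains to check its four hypotheses \cref{cool1}--\cref{cool4} for the beta-binomial matrix. Each of these is one of the lemmas established in this section.

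First, \cref{cool1} holds: by \cref{e:bnk'} we have $b_{n,k}>0$ for $k\in\{0,\ldots,n\}$, and $b_{n,k}=0$ otherwise. Second, \cref{cool2} is exactly \cref{l:b1}. Third, \cref{cool3} is the ``consequently'' part of the row bound \cref{l:b2}, which gives $\max_k b_{n,k}\le \frac{4^{1-\beta}}{\Bf(\beta,\beta)}\cdot\frac{1}{n+1}\to 0$. Fourth, the Lorentz condition \cref{cool4} is \cref{l:b4}, which shows $\sum_{k}|b_{n,k}-b_{n,k+1}| = 2b_{n,m_n}-b_{n,0}$. This quantity tends to $0$ by the same row bound.

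There is one small bookkeeping point. The sums in \cref{cool} are over $k\in\NN$. Each row of $(b_{n,k})$ has only finitely many nonzero entries, so $\sum_{k=0}^\infty b_{n,k}T^kx_0$ is a finite sum and no convergence issue arises. The entry $b_{n,n+1}=0$ is correctly accounted for in the Lorentz sum of \cref{l:b4}.

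No step is a real obstacle, since all the analytic work (unimodality, the row bound, and the Lorentz computation) has already been done. The only care needed is to confirm that the hypotheses of \cref{cool} match the lemmas exactly as stated, in particular that \cref{l:b4} controls the full sum over $k\in\NN$ and not just over $k\le n-1$. With that confirmed, \cref{cool} gives $\lim_{n\to\infty}\sum_{k=0}^\infty b_{n,k}T^kx_0=P_{\Fix T}(x_0)$.
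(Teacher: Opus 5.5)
Your proposal is correct and is essentially the paper's own proof. The paper likewise invokes \cref{cool} for the matrix $(b_{n,k})$, with \cref{l:b1}, \cref{l:b2}, and \cref{l:b4} supplying the row-sum, vanishing-column, and Lorentz hypotheses, and with nonnegativity immediate from \cref{e:bnk}.
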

\begin{proof}
\cref{l:b1}, \cref{l:b2}, and \cref{l:b4} show that the 
matrix $(b_{n,k})_{(n,k)\in\NN\times\NN}$ satisfies the 
assumptions of \cref{cool}, and hence we're done.
\end{proof}

We conclude this section by introducing the following sequence of 
polynomials induced by the beta-binomial distribution:

\begin{empheq}[box=\mybluebox]{equation}
\label{e:qn}
(\forall\nnn)\quad q_n(t) := \sum_{k=0}^n b_{n,k}t^k.
\end{empheq}
Note that 
\begin{equation}
\label{e:q01}
q_0(t) = 1\quad\text{and}\quad
q_1(t) = \thalb + \thalb t. 
\end{equation}

The proof of the following recursion was obtained with the help of 
\texttt{ChatGPT} which allowed us to find an elementary proof that 
avoided the use of Gaussian hypergeometric functions for the 
beta-binomial distribution (see \cite[Section~6.2.2]{JKK}). 

\begin{proposition}[recursion formula for $q_n$] 
\label{p:qrecursion}
We have, for all $n\geq 1$, the recursion 
\begin{equation}
\label{e:qrecursion}
(n+2\beta)q_{n+1}(t) = (n+\beta)(1+t)q_n(t) - ntq_{n-1}(t).
\end{equation}
\end{proposition}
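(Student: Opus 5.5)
We will prove \cref{e:qrecursion} by comparing coefficients of $t^k$ on both sides. By \cref{e:qn} and the convention that $b_{n,k}=0$ outside $\{0,\ldots,n\}$, the recursion is equivalent to
\begin{equation*}
(\forall k\in\{0,1,\ldots,n+1\})\quad
(n+2\beta)\,b_{n+1,k} = (n+\beta)\big(b_{n,k}+b_{n,k-1}\big) - n\,b_{n-1,k-1}.
\end{equation*}
The first step is to rewrite the weights in terms of rising factorials $(x)_m := x(x+1)\cdots(x+m-1)$, with $(x)_0:=1$. Using \cref{f:BG}\cref{f:BG2}\&\cref{f:BGnew}, we have $\Bf(k+\beta,n-k+\beta)/\Bf(\beta,\beta)=(\beta)_k(\beta)_{n-k}/(2\beta)_n$. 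Hence
\begin{equation*}
b_{n,k}=\frac{n!}{k!\,(n-k)!}\cdot\frac{(\beta)_k(\beta)_{n-k}}{(2\beta)_n}.
\end{equation*}
This removes all integrals and reduces the claim to an algebraic identity.

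Next we treat the boundary indices separately. For $k=0$ the terms $b_{n,-1}$ and $b_{n-1,-1}$ vanish, so we need $(n+2\beta)b_{n+1,0}=(n+\beta)b_{n,0}$. This is immediate from $b_{n,0}=(\beta)_n/(2\beta)_n$. The case $k=n+1$ is the mirror image: there $b_{n+1,n+1}=b_{n+1,0}$, $b_{n,n}=b_{n,0}$ and $b_{n-1,n}=0$ by the symmetry \cref{e:bsymm}, so it reduces to the same check.

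For $1\le k\le n$, put $j:=n-k$ and use $(2\beta)_{n+1}=(2\beta)_n(n+2\beta)$ and $(2\beta)_n=(2\beta)_{n-1}(n-1+2\beta)$ to bring all four terms over the denominator $(2\beta)_n$. Then factor out the common quantity
\begin{equation*}
\frac{n!\,(\beta)_{k-1}(\beta)_j}{k!\,(j+1)!\,(2\beta)_n}.
\end{equation*}
The identity becomes the polynomial identity
\begin{equation*}
(n+1)(k-1+\beta)(j+\beta) = (n+\beta)(j+1)(k-1+\beta) + (n+\beta)k(j+\beta) - k(j+1)(n-1+2\beta),
\end{equation*}
where $n=j+k$. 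A spot check at $\beta=2$, $k=j=1$ gives $18=16+12-10$.

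The final step is to verify this identity by expanding both sides in $j,k,\beta$. The main obstacle is purely bookkeeping: getting the factorial and rising-factorial ratios right when extracting the common factor, for instance $(\beta)_k=(\beta)_{k-1}(k-1+\beta)$, $(\beta)_{j+1}=(\beta)_j(j+\beta)$ and $1/(k-1)!=k/k!$. For the expansion itself, we would first group the two $(n+\beta)$ terms and substitute $n=j+k$ so that everything is a polynomial in $j,k,\beta$.
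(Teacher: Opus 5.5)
Your proof is correct, and it takes a genuinely different route from the paper's.

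\textbf{The paper's route.} The paper never touches individual coefficients. It writes $q_m(t)=\Bf(\beta,\beta)^{-1}\int_0^1 x^{\beta-1}(1-x)^{\beta-1}(1-x+tx)^m\,dx$ and applies the Fundamental Theorem of Calculus to $h(x)=x^{\beta}(1-x)^{\beta}(1-x+tx)^n$, which vanishes at $0$ and $1$. It then relies on a non-obvious rewriting of $\beta(1-2x)s(x)+n(t-1)x(1-x)$ as a quadratic in $s(x)$ whose coefficients are divided by $t-1$. That division is why the case $t=1$ has to be set aside.

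\textbf{Your route.} You prove the coefficientwise relation $(n+2\beta)b_{n+1,k}=(n+\beta)(b_{n,k}+b_{n,k-1})-nb_{n-1,k-1}$ from the rising-factorial form of $b_{n,k}$. Your extraction of the common factor is right for all four terms, including $nb_{n-1,k-1}$ via $(2\beta)_n=(2\beta)_{n-1}(n-1+2\beta)$. The boundary cases $k=0$ and $k=n+1$ are also handled correctly.

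The identity you leave to expansion does hold, and there is a quick way to see it. Set $a:=k-1+\beta$, $c:=j+\beta$ and $\gamma:=1-\beta$. Then $n-1+2\beta=a+c$, $k=a+\gamma$, $j+1=c+\gamma$, $n+\beta=a+c+\gamma$ and $n+1=a+c+2\gamma$. The right-hand side becomes $(a+c+\gamma)\bigl(2ac+\gamma(a+c)\bigr)-(a+\gamma)(c+\gamma)(a+c)=ac(a+c+2\gamma)$, which is the left-hand side.

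\textbf{Trade-offs.} The paper's argument is a single computation with no case split in $k$, but it hinges on spotting that algebraic identity. Yours is entirely finite and elementary, at the cost of some index bookkeeping. It also has a bonus: after clearing denominators and shifting the first index by one, your coefficient recursion is exactly \cref{e:cnkshort} for $c_{n,k}$ with $\alpha=2\beta$. So the link between $b_{n,k}$ and $c_{n,k}$ that the paper notes for the polynomials is visible directly in the coefficients.
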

\begin{proof}
The formula is clear for $t=1$, so we assume 
that $t\neq 1$. 
Let $m\in\NN$ and set 
\begin{equation}
s(x) := 1-x+tx. 
\end{equation}
Then  \cref{e:bnk'}, \cref{e:defB}, and the binomial theorem yield
\begin{subequations}
\label{e:qm}
\begin{align}
q_m(t)
&=\sum_{k=0}^m \binom{m}{k}\frac{\Bf(k+\beta,m-k+\beta)}{\Bf(\beta,\beta)}t^k\\
&=\frac{1}{\Bf(\beta,\beta)}\sum_{k=0}^m \binom{m}{k}
\int_0^1 x^{k+\beta-1}(1-x)^{m-k+\beta-1}t^k\,dx\\
&=\frac{1}{\Bf(\beta,\beta)}\int_0^1 x^{\beta-1}(1-x)^{\beta-1}
\sum_{k=0}^m \binom{m}{k}(1-x)^{m-k}(tx)^k\,dx\\
&=\frac{1}{\Bf(\beta,\beta)}\int_0^1 x^{\beta-1}(1-x)^{\beta-1} (s(x))^m\,dx.
\end{align}
\end{subequations}
Now let $n\geq 1$ and consider the function 
\begin{equation*}
h(x) := x^{\beta}(1-x)^{\beta}(s(x))^n,
\end{equation*}
which is a product of the three functions 
$x^\beta, (1-x)^\beta$, and $(s(x))^n$.
Now $s'(x)=t-1$, so the product rule (for 3 functions) yields that 
\begin{align*}
h'(x)
&= x^{\beta-1}(1-x)^{\beta-1}(s(x))^{n-1}
\Big(\beta(1-x)s(x)+x\beta(-1)s(x)+x(1-x)n(t-1)\Big)\\
&= x^{\beta-1}(1-x)^{\beta-1}(s(x))^{n-1}
\Big(\beta(1-2x)s(x)+n(t-1)x(1-x)\Big). 
\end{align*}
On the other hand, we have 
the following alternative expression, 
proposed by \texttt{ChatGPT}, 
for the term in the big parentheses:
\begin{align*}
\beta(1-2x)s(x)+n(t-1)x(1-x) 
&= 
-\frac{n+2\beta}{t-1}(s(x))^2+\frac{(n+\beta)(1+t)}{t-1}s(x)-\frac{nt}{t-1}.
\end{align*}
Altogether, 
\begin{align*}
(t-1)h'(x)
&= x^{\beta-1}(1-x)^{\beta-1}s(x)^{n-1}
\Big(-(n+2\beta)(s(x))^2 
+(n+\beta)(1+t)s(x)-nt\Big). 
\end{align*}
Because $h(0)=h(1)=0$, 
the Fundamental Theorem of Calculus and \cref{e:qm} yield
\begin{align*}
0 
&= 
(t-1)\int_0^1 h'(x)\,dx\\
&= 
\int_0^1 x^{\beta-1}(1-x)^{\beta-1}s(x)^{n-1}
\Big(-(n+2\beta)(s(x))^2 
+(n+\beta)(1+t)s(x)-nt\Big)\,dx\\
&= \Bf(\beta,\beta)\Big(-(n+2\beta)q_{n+1}(t) + (n+\beta)(1+t)q_n(t)-ntq_{n-1}(t)\Big).
\end{align*}
Dividing by $\Bf(\beta,\beta)$ followed by re-arranging, 
we obtain \cref{e:qrecursion}. 
\end{proof}

\section{\BN\ Acceleration, Regularity of the Coefficients, 
and Limit Identification} 

\label{s:main}

Throughout this section, we let 
\begin{subequations}
\label{e:fastKM} 
\begin{empheq}[box=\mybluebox]{equation}
\alpha> 2,
\end{empheq}
and assume that 
\begin{empheq}[box=\mybluebox]{equation}
T\colon X\to X 
\;\;\text{is linear and nonexpansive, and $x_0\in X$ is a starting point} 
\end{empheq}
for the sequence $(x_n)_\nnn$ generated by 
the \emph{\BN\ acceleration} 
\begin{empheq}[box=\mybluebox]{align}
x_1 &:= x_0, \label{e:fastKM1}\\
(\forall n\geq 1)\quad 
x_{n+1} &:= \left(1-\frac{\alpha}{2(n+\alpha)}\right)x_n
+\frac{\alpha}{2(n+\alpha)}Tx_n+\frac{n}{n+\alpha}(Tx_n-Tx_{n-1}). 
\label{e:fastKM2}
\end{empheq}
The linearity of $T$ allows us to write 
\begin{empheq}[box=\mybluebox]{align}
(\forall\nnn)\quad  x_{n+1}=\sum_{k=0}^{\infty}c_{n,k}T^kx_0 =: p_n(T)x_0,
\label{e:fastKMc}
\end{empheq}
where we set 
$c_{n,k}:=0$ if $n<0$ or $k<0$ or $k>n$ 
and 
where $p_n(t)$ is a polynomial of degree $n$. 
\end{subequations}
The update formula \cref{e:fastKM2} yields the following recursion 
for the coefficients $c_{n,k}$:

\begin{lemma}[recursion formula for $c_{n,k}$]
\label{f:2.2}
Consider the assumptions \cref{e:fastKM}.
Then 
\begin{equation}
\label{e:cnk01}
c_{0,0} = 1, \quad
c_{1,0} = 1-\frac{\alpha}{2(1+\alpha)} = \frac{2+\alpha}{2(1+\alpha)}, \quad
c_{1,1} = \frac{\alpha}{2(1+\alpha)} 
\end{equation}
and 
\begin{subequations}
\label{e:cnk}
\begin{align}
(\forall n\geq 2)\quad 
c_{n,k}&=\left(1-\frac{\alpha}{2(n+\alpha)}\right)c_{n-1,k}
       +\frac{\alpha}{2(n+\alpha)}c_{n-1, k-1}
       +\frac{n}{n+\alpha}c_{n-1, k-1}-\frac{n}{n+\alpha}c_{n-2, k-1}
\label{e:cnklong}\\
&= \frac{2n+\alpha}{2(n+\alpha)}c_{n-1,k}
   + \frac{2n+\alpha}{2(n+\alpha)}c_{n-1,k-1}-\frac{n}{n+\alpha}c_{n-2,k-1}.
\label{e:cnkshort}
        \end{align}
\end{subequations}
\end{lemma}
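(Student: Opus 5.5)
The plan is to compute the first two iterates directly, and then to substitute the representation \cref{e:fastKMc} into the update \cref{e:fastKM2} and compare coefficients of $T^k x_0$.

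\emph{Initial coefficients.} Since $x_1 = p_0(T)x_0 = x_0$ by \cref{e:fastKM1}, we get $c_{0,0}=1$. For $n=1$, the update \cref{e:fastKM2} gives
\begin{equation*}
x_2=\Big(1-\tfrac{\alpha}{2(1+\alpha)}\Big)x_1+\tfrac{\alpha}{2(1+\alpha)}Tx_1+\tfrac{1}{1+\alpha}(Tx_1-Tx_0).
\end{equation*}
Because $x_1=x_0$, the last term vanishes. Reading off the coefficients of $x_0$ and $Tx_0$ then yields the stated values of $c_{1,0}$ and $c_{1,1}$. The simplification $c_{1,0}=(2+\alpha)/(2(1+\alpha))$ is immediate arithmetic.

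\emph{General recursion.} Let $n\geq 2$ and apply \cref{e:fastKM2} with index $n$, which produces $x_{n+1}$. Substitute $x_n=\sum_k c_{n-1,k}T^kx_0$ and $x_{n-1}=\sum_k c_{n-2,k}T^kx_0$; the latter is valid since $n-1\geq 1$. By linearity of $T$,
\begin{equation*}
Tx_n=\sum_k c_{n-1,k}T^{k+1}x_0=\sum_k c_{n-1,k-1}T^kx_0,
\end{equation*}
and similarly for $Tx_{n-1}$. The convention $c_{m,j}=0$ for $j<0$ or $j>m$ makes these index shifts harmless at the boundary. Collecting the coefficient of $T^kx_0$ gives precisely \cref{e:cnklong}. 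Combining the two $c_{n-1,k-1}$ terms uses
\begin{equation*}
\frac{\alpha}{2(n+\alpha)}+\frac{n}{n+\alpha}=\frac{2n+\alpha}{2(n+\alpha)},
\end{equation*}
and
\begin{equation*}
1-\frac{\alpha}{2(n+\alpha)}=\frac{2n+\alpha}{2(n+\alpha)},
\end{equation*}
which gives \cref{e:cnkshort}.

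\emph{The subtle point.} Strictly, the coefficients $c_{n,k}$ are \emph{defined} by this polynomial recursion rather than being uniquely determined by the vectors $T^kx_0$, which may be linearly dependent. So I would phrase the argument as follows: define $p_n$ recursively by
\begin{equation*}
p_0=1,\qquad p_1(t)=c_{1,0}+c_{1,1}t,
\end{equation*}
\begin{equation*}
p_n(t)=\Big(1-\tfrac{\alpha}{2(n+\alpha)}\Big)p_{n-1}(t)+\tfrac{\alpha}{2(n+\alpha)}t\,p_{n-1}(t)+\tfrac{n}{n+\alpha}t\big(p_{n-1}(t)-p_{n-2}(t)\big).
\end{equation*}
Then show by induction that $x_{n+1}=p_n(T)x_0$, which holds because $T$ is linear and therefore commutes with polynomial evaluation. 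The recursion \cref{e:cnk} is then exactly the coefficient comparison in this polynomial identity. With that framing there is no real obstacle; the only care needed is the bookkeeping of index shifts and the zero conventions.
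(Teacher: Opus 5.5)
Your proposal is correct and follows essentially the same route as the paper: compute $x_1$ and $x_2$ directly, then substitute the representation \cref{e:fastKMc} into the update \cref{e:fastKM2} and compare coefficients of $T^kx_0$ using the zero conventions. The paper proves \cref{e:cnk} by induction with an explicit $n=2$ base case, whereas your single coefficient comparison valid for every $n\geq 2$ makes that unnecessary. Your recursive definition of the polynomials $p_n$ also settles explicitly the well-definedness of $c_{n,k}$ (when the vectors $T^kx_0$ are linearly dependent), which the paper leaves implicit.
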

\begin{proof}
By \cref{e:fastKM1}, $x_1=x_0=c_{0,0}T^0x_0 = c_{0,0}x_0$, so $c_{0,0}=1$. 
Next, by \cref{e:fastKM2}, 
\begin{subequations}
\begin{align*}
    x_2&=\left(1-\frac{\alpha}{2(1+\alpha)}\right)x_1
        +\frac{\alpha}{2(1+\alpha)}Tx_1+\frac{1}{1+\alpha}(Tx_1-Tx_0)
=\left(1-\frac{\alpha}{2(1+\alpha)}\right)x_0
+\frac{\alpha}{2(1+\alpha)}Tx_0.
\end{align*}
\end{subequations}
Therefore, 
\begin{align*}
    c_{1,0}&=1-\frac{\alpha}{2(1+\alpha)}=\frac{2+\alpha}{2(1+\alpha)}
    \quad\text{and}\quad 
    c_{1,1}=\frac{\alpha}{2(1+\alpha)},
\end{align*}
which yields \cref{e:cnk01}.

Before we prove \cref{e:cnk} by induction, let us note that 
\cref{e:cnkshort} is clear by algebraic simplification. 
For $n=2$, by \cref{e:fastKM2},
\begin{subequations}
\begin{align*}
x_3
&=
\left(1-\frac{\alpha}{2(2+\alpha)}\right)x_2
+\frac{\alpha}{2(2+\alpha)}Tx_2+\frac{2}{2+\alpha}(Tx_2-Tx_1)\\ 
&=
\frac{4+\alpha}{2(2+\alpha)}x_2 
+ \frac{4+\alpha}{2(2+\alpha)}Tx_2
-\frac{2}{2+\alpha}Tx_1\\
&=
\frac{4+\alpha}{2(2+\alpha)}(c_{1,0}x_0+c_{1,1}Tx_0)
+ \frac{4+\alpha}{2(2+\alpha)}(c_{1,0}Tx_0+c_{1,1}T^2x_0)
-\frac{4}{2(2+\alpha)}Tx_0\\
&=
\frac{4+\alpha}{2(2+\alpha)}c_{1,0}x_0
+ \frac{1}{2(2+\alpha)}\Big((4+\alpha)c_{1,1} + (4+\alpha)c_{1,0}-4\Big)Tx_0
+ \frac{4+\alpha}{2(2+\alpha)}c_{1,1}T^2x_0\\
&=
\frac{4+\alpha}{4(1+\alpha)}x_0
+ \frac{1}{4(2+\alpha)(1+\alpha)}
\big(2\alpha(1+\alpha)\big)Tx_0
+ \frac{\alpha(4+\alpha)}{4(2+\alpha)(1+\alpha)}T^2x_0\\
&=
\frac{4+\alpha}{4(1+\alpha)}x_0
+ \frac{\alpha}{2(2+\alpha)}
Tx_0
+ \frac{\alpha(4+\alpha)}{4(2+\alpha)(1+\alpha)}T^2x_0.
\end{align*}
\end{subequations}
Hence
\begin{subequations}
\label{e:c2k}
\begin{align}
c_{2,0} &= \frac{4+\alpha}{4(1+\alpha)}
= \frac{2\cdot 2+\alpha}{2(2+\alpha)}c_{1,0},\\
c_{2,1} &= \frac{\alpha}{2(2+\alpha)} 
= \frac{2\cdot 2+\alpha}{2(2+\alpha)}c_{1,1}
+ \frac{2\cdot 2+\alpha}{2(2+\alpha)}c_{1,0} - 
\frac{2}{2+\alpha}c_{0,0},\\
c_{2,2} &= \frac{\alpha(4+\alpha)}{4(2+\alpha)(1+\alpha)}
= \frac{2\cdot 2+\alpha}{2(2+\alpha)}c_{1,1},
\end{align}
\end{subequations}
which verifies the base case $n=2$. 

Now assume that \cref{e:cnk} holds for all $n\leq m$,  where $m\geq 2$. 
Then,  by \cref{e:fastKM},
\begin{align*}
x_{m+2}
&=
\left(1-\frac{\alpha}{2(m+1+\alpha)}\right)x_{m+1}
        +\frac{\alpha}{2(m+1+\alpha)}Tx_{m+1}+\frac{m+1}{m+1+\alpha}
        (Tx_{m+1}-Tx_{m})\\
&=
\left(1-\frac{\alpha}{2(m+1+\alpha)}\right)\sum_{k=0}^{m}c_{m,k}T^kx_0 
        +\left(\frac{\alpha}{2(m+1+\alpha)}+\frac{m+1}{m+1+\alpha}\right)
        \sum_{k=0}^{m}c_{m,k}T^{k+1}x_0\\ 
        &\qquad -\frac{m+1}{m+1+\alpha}\sum_{k=0}^{m-1}c_{m-1,k}T^{k+1}x_0\\
&=\sum_{k=0}^m\left(1-\frac{\alpha}{2(m+1+\alpha)}\right)c_{m,k}T^kx_0
        +\sum_{k=1}^{m+1}\left(\frac{\alpha}{2(m+1+\alpha)}+\frac{m+1}{m+1+\alpha}\right)c_{m,k-1}T^kx_0\\ 
        &\qquad -\sum_{k=1}^{m}\frac{m+1}{m+1+\alpha}c_{m-1,k-1}T^kx_0.     
\end{align*}
By the convention, we have
\begin{align*}
    c_{m+1, k}=\left(1-\frac{\alpha}{2(m+1+\alpha)}\right)c_{m,k}
            +\frac{\alpha}{2(m+1+\alpha)}c_{m, k-1}
            +\frac{m+1}{m+1+\alpha}c_{m, k-1}-\frac{m+1}{m+1+\alpha}c_{m-1, k-1},
\end{align*}
as claimed.
\end{proof}

\begin{remark}[recursion formula for $p_n$]
\label{r:pnrecursion}
In terms of the polynomials $p_n$ defined by \cref{e:fastKMc}, 
we note that \cref{f:2.2} turns into the following recursion formula:
\begin{subequations}
\begin{align}
p_0(t) &= 1,\\
p_1(t) &= \frac{2+\alpha}{2(1+\alpha)}+\frac{\alpha}{2(1+\alpha)}t,\\
(\forall n\geq 2)\quad p_{n}(t) &= 
\frac{2n+\alpha}{2(n+\alpha)}p_{n-1}(t)
+ \frac{2n+\alpha}{2(n+\alpha)}tp_{n-1}(t)-\frac{n}{n+\alpha}tp_{n-2}(t). 
\label{e:pnrecn}
\end{align}
\end{subequations}
\end{remark}

\begin{remark}[constant and leading coefficients of $p_n$]
\label{r:cn0}
It follows from \cref{f:2.2} that for $n\geq 2$, we have 
\begin{align}
\label{e:rcn0}
c_{n,0}=\frac{2n+\alpha}{2(n+\alpha)}c_{n-1,0};
\end{align}
luckily, after recalling \cref{e:cnk01}, we have $c_{1,0}= (2+\alpha)/(2(1+\alpha)) = (2\cdot 1+\alpha)/(2(1+\alpha))c_{0,0}$ as well. 
Thus 
\begin{align}
\label{e:260325a}
(\forall n\geq 1)\quad c_{n,0}
=\prod_{m=1}^n\frac{2m+\alpha}{2(m+\alpha)} 
=\frac{2+\alpha}{2(1+\alpha)}\prod_{m=2}^n\frac{2m+\alpha}{2(m+\alpha)}>0
\end{align}
It also follows from \cref{f:2.2} that for $n\geq 2$, we have 
        \begin{align*}
c_{n,n}=\frac{2n+\alpha}{2(n+\alpha)}c_{n-1,n-1};
        \end{align*}
because $c_{1,1} = \alpha/(2(1+\alpha))$, we obtain 
\begin{equation}
\label{e:cnn}
(\forall n\geq 2)\quad c_{n,n}=\frac{\alpha}{2(1+\alpha)}\prod_{m=2}^n\frac{2m+\alpha}{2(m+\alpha)}>0. 
\end{equation}
Combining with \cref{e:260325a}, we obtain
\begin{equation}
\label{e:cnn1}
(\forall n\geq 1)\quad c_{n,n}=\frac{\alpha}{2+\alpha}c_{n,0},
\end{equation}
where the case $n=1$ is verified directly. 
\end{remark}

\begin{theorem}[\BN\ distribution]
\label{p:3.1}
Let $(c_{n,k})_{n\in\NN,k\in\NN}$ be given by \cref{e:fastKMc} and  
\cref{f:2.2}. Then the following hold:
\begin{enumerate}[itemsep=0.5em]
\item 
\label{p:3.1i}
$(\forall n\in \NN) \quad \displaystyle \sum_{k=0}^{n}c_{n,k}=1$.
\item 
\label{p:3.1ii}
$(\forall n\in \NN)
(\forall k\in\{0,1,\ldots,n\}) \quad c_{n,k}> 0$.
\item 
\label{p:3.1iii}
$(\forall k\in \NN)\quad \displaystyle\lim_{n\to\infty} c_{n,k}=0$.
\end{enumerate}
Consequently, the coefficients $c_{n,k}$ correspond to 
a discrete probability distribution, which we call the \emph{\BN\ distribution}, and 
the iteration \cref{e:fastKM} is a special weighted mean iteration induced by the \BN\ distribution.
\end{theorem}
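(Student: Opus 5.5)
The plan is to identify the coefficients $c_{n,k}$ explicitly as an \emph{asymmetric} beta-binomial distribution. Once that is done, all three items follow exactly as in \cref{l:b1} and \cref{l:b2}.

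\emph{Guessing the formula.} The data in \cref{e:cnk01} and \cref{e:cnn1} suggest the parameters. Set $a:=\alpha/2>1$ and $b:=a+1$. Then \cref{e:B1} and \cref{e:B2} give
\begin{equation*}
\frac{\Bf(a,b+1)}{\Bf(a,b)}=\frac{b}{a+b}=\frac{2+\alpha}{2(1+\alpha)}=c_{1,0},
\qquad
\frac{\Bf(a+1,b)}{\Bf(a,b)}=\frac{a}{a+b}=c_{1,1}.
\end{equation*}
Moreover, by \cref{e:B3}, the ratio of the last to the first weight of such a distribution is $\Bf(n+a,b)/\Bf(a,n+b)$, which telescopes to $a/b=\alpha/(2+\alpha)$. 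This is consistent with \cref{e:cnn1}. I therefore conjecture that
\begin{equation*}
c_{n,k}=\binom{n}{k}\frac{\Bf(k+a,n-k+b)}{\Bf(a,b)},
\end{equation*}
or equivalently, as in \cref{e:qm}, that
\begin{equation*}
p_n(t)=\frac{1}{\Bf(a,b)}\int_0^1 x^{a-1}(1-x)^{b-1}\big(s(x)\big)^n\,dx,
\qquad s(x)=1-x+tx .
\end{equation*}

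\emph{Proving the formula.} Call the right-hand side $r_n(t)$. It suffices to check that $r_0=p_0$ and $r_1=p_1$, which is the computation above, and that $(r_n)$ satisfies the recursion \cref{e:pnrecn} of \cref{r:pnrecursion}. Multiplied out, that recursion reads
\begin{equation*}
(n+\alpha)p_n=(n+a)(1+t)p_{n-1}-nt\,p_{n-2}.
\end{equation*}
I would copy the proof of \cref{p:qrecursion}. Take $h(x):=x^{a}(1-x)^{b}s(x)^{n-1}$, which satisfies $h(0)=h(1)=0$, and compute
\begin{equation*}
h'(x)=x^{a-1}(1-x)^{b-1}s(x)^{n-2}\Big(\big(a(1-x)-bx\big)s(x)+(n-1)(t-1)x(1-x)\Big).
\end{equation*}
Next, express the bracket (times $t-1$) as a quadratic $A s^2+Bs+C$ in $s$, using $x=(s-1)/(t-1)$. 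Integrating over $[0,1]$ and using the Fundamental Theorem of Calculus then yields $A r_n+B r_{n-1}+C r_{n-2}=0$.

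This algebraic identity is the main obstacle. The asymmetry $b=a+1$ must produce exactly the coefficients $n+\alpha$, $(n+a)(1+t)$ and $nt$. I would first try the index shift above; if the coefficients come out slightly off, I would try $h=x^{a}(1-x)^{b}s^{n-1}$ with $b$ replaced by $b-1$, or use the exponent $n$ instead of $n-1$. The correct choice is detectable by matching the coefficient of $s^2$ to $-(n+\alpha)$. If the integral identity resists, a fallback is a direct induction on $c_{n,k}$ via \cref{e:cnkshort}: divide by the conjectured $c_{n-1,k}$ and reduce everything to rational identities using \cref{e:B1}--\cref{e:B3}.

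\emph{Deducing the three items.} Item \cref{p:3.1i} follows by putting $t=1$, exactly as in \cref{l:b1}; alternatively, it follows directly from \cref{e:pnrecn} by induction, since $\frac{2n+\alpha}{n+\alpha}-\frac{n}{n+\alpha}=1$. Item \cref{p:3.1ii} is immediate from the closed form, since the beta function is positive. For item \cref{p:3.1iii}, argue as in \cref{l:b2}. Because $a-1>0$ and $b-1>0$, we have $0\le x^{a-1}(1-x)^{b-1}\le 1$ on $[0,1]$, and hence
\begin{equation*}
c_{n,k}\le \frac{1}{\Bf(a,b)}\binom{n}{k}\Bf(k+1,n-k+1)=\frac{1}{\Bf(a,b)}\cdot\frac{1}{n+1}\to 0 .
\end{equation*}
The ``consequently'' part is then just the definition of a probability distribution together with \cref{e:fastKMc}.
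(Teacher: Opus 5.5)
There is a genuine gap: the closed form you conjecture is false for every $n\ge 2$, so items (ii) and (iii) remain unproved.

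Already at $n=2$ the paper has $c_{2,1}=\frac{\alpha}{2(\alpha+2)}$ (e.g.\ $\tfrac13$ for $\alpha=4$). Your formula instead gives $2\Bf(a+1,b+1)/\Bf(a,b)=\frac{2ab}{(a+b)(a+b+1)}=\frac{\alpha}{2(\alpha+1)}$ (i.e.\ $\tfrac25$).

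Your consistency check against \cref{e:cnn1} was mis-telescoped. Iterating \cref{e:B3} gives $\Bf(n+a,b)/\Bf(a,n+b)=\prod_{j=0}^{n-1}\frac{a+j}{b+n-1-j}$. For $b=a+1$ this equals $\frac{a}{a+n}$, not the constant $\frac ab$.

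In fact no beta-binomial law, symmetric or not, can represent the \BN\ weights. Its first weight is $\prod_{j=0}^{n-1}\frac{b+j}{a+b+j}$. Matching \cref{e:260325a} for $n=1,2$ already forces $(a,b)=(\alpha/2,\alpha/2+1)$, and that choice violates \cref{e:cnn1} for all $n\ge 2$.

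So the step you flagged as the main obstacle is a real obstruction, not bookkeeping. With $h=x^a(1-x)^bs^{n-1}$, integration by parts yields $(n+\alpha)r_n=\bigl((n-1+\tfrac\alpha2)t+n+\tfrac\alpha2\bigr)r_{n-1}-(n-1)t\,r_{n-2}$. This passes your $s^2$-coefficient test, but its right-hand side differs from that of \cref{e:pnrecn} by $t(r_{n-2}-r_{n-1})$. Since the target formula is false, no other auxiliary function can rescue it, and neither can a direct induction via \cref{e:cnkshort}.

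What is true is the weaker relation in the paper's remark on $p_n$ versus $q_n$. There, $p_n$ obeys the \emph{symmetric} recursion \cref{e:qrecursion} with $\beta=\alpha/2$ only after an index shift and with different initial data. So $c_{n,k}$ is merely close to $b_{n,k}$.

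Your argument for item (i), induction on \cref{e:pnrecn} at $t=1$, is correct and is the paper's. For (ii) and (iii) the paper never uses a closed form.

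Positivity comes from an induction on $\delta_{n,k}:=c_{n,k}-\frac{2n+\alpha}{2(n+\alpha)}c_{n-1,k}$, which satisfies
\[
\delta_{n+1,k}=\frac{2(n+1)+\alpha}{2(n+1+\alpha)}\Bigl(\delta_{n,k-1}+\frac{\alpha(\alpha-2)}{2(n+\alpha)(2(n+1)+\alpha)}\,c_{n-1,k-1}\Bigr)\ge 0 .
\]
This is where $\alpha>2$ enters. It gives $c_{n+1,k}>\frac{2(n+1)+\alpha}{2(n+1+\alpha)}c_{n,k}>0$ for $1\le k\le n$.

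For (iii), first $c_{n,0}=\prod_{m=1}^n\bigl(1-\frac{\alpha}{2(m+\alpha)}\bigr)\to0$. Next, the ratios $r_{n,k}:=c_{n,k}/c_{n,0}$ converge. This follows from an induction on $k$, using the recursion for $r_{n,k}$ derived from \cref{e:cnkshort}, which gives $|r_{n,k}-r_{n-1,k}|\le\beta_k/n^2$. Hence $c_{n,k}=r_{n,k}c_{n,0}\to0$.
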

\begin{proof}
\cref{p:3.1i}:
For $n\in\{0,1\}$, 
it is obvious that the sum equals 1 by \cref{e:cnk01}.
Note \cref{e:cnk} yields for $n\geq 2$
\begin{subequations}
            \label{e:3.11}
\begin{align}
           \sum_{k=0}^nc_{n,k}&=\sum_{k=0}^n\left(\left(1-\frac{\alpha}{2(n+\alpha)}\right)c_{n-1,k}
            +\left(\frac{\alpha}{2(n+\alpha)}+\frac{n}{n+\alpha}\right)c_{n-1, k-1}
            -\frac{n}{n+\alpha}c_{n-2, k-1}\right)\\ 
            &=\sum_{k=0}^n\left(\frac{2n+\alpha}{2(n+\alpha)}c_{n-1,k}
            +\frac{2n+\alpha}{2(n+\alpha)}c_{n-1, k-1}-\frac{n}{n+\alpha}c_{n-2, k-1}\right)\\ 
            &=\frac{2n+\alpha}{2(n+\alpha)}\left(\sum_{k=0}^{n-1}c_{n-1,k}+\sum_{k=1}^nc_{n-1, k-1}\right)
            -\frac{n}{n+\alpha}\sum_{k=1}^{n-1}c_{n-2, k-1}\\ 
            &=\frac{2n+\alpha}{n+\alpha}\sum_{k=0}^{n-1}c_{n-1,k}-\frac{n}{n+\alpha}\sum_{k=0}^{n-2}c_{n-2, k}.
        \end{align}
\end{subequations}
Because $\sum_{k=0}^{1}c_{1,k}=1$ and $c_{0,0}=1$, 
it follows from \cref{e:3.11} that 
$\sum_{k=0}^2c_{2,k}=1$ (which can also be seen from \cref{e:c2k}).
Now assume that \cref{p:3.1i} 
holds for all $n\leq m$, for some fixed $m\geq 2$. 
Then \cref{e:3.11} implies,
        \begin{align*}
            \sum_{k=0}^{m+1}c_{m+1,k}&=\frac{2(m+1)+\alpha}{(m+1)+\alpha}
            \sum_{k=0}^{m}c_{m,k}-\frac{m+1}{(m+1)+\alpha}\sum_{k=0}^{m-1}c_{m-1, k}\\ 
&=\frac{2(m+1)+\alpha}{(m+1)+\alpha}-\frac{m+1}{(m+1)+\alpha}=1,
        \end{align*} 
and we have shown \cref{p:3.1i} by induction.

\cref{p:3.1ii}:
The result is clear when $n=0$, or when $k\in\{0,n\}$ (by \cref{r:cn0}). 
Now we set 
\begin{equation}
\label{e:defdelta}
(\forall n\geq 1)(\forall k\in \mathbb{Z})\quad 
\delta_{n,k}:=c_{n,k}-\frac{2n+\alpha}{2(n+\alpha)}c_{n-1,k}.
\end{equation}
We compute 
\begin{align*}
\delta_{1,0}&=c_{1,0}-\frac{2+\alpha}{2(1+\alpha)}c_{0,0}
=0, \tag{by \cref{e:cnk01}}\\ 
\delta_{1,1}&=c_{1,1}=\frac{\alpha}{2(1+\alpha)} > 0\tag{by \cref{e:cnk01}}\\ 
\delta_{1,k}&=0, \quad \text{for all $k\in \mathbb{Z}\smallsetminus \{0,1\}$.} 
\tag{because $c_{n,k} = 0$} 
\end{align*}
Therefore, $\delta_{1,k}$ and $c_{1,k}$ are nonnegative for all 
$k\in \mathbb{Z}$, and  
$\delta_{1,1},c_{1,0},c_{1,1}$ are all positive. 
Now assume that for some $n\geq 1$, the numbers 
$\delta_{n,k}$ and $c_{n,k}$ are nonnegative 
for all $k\in \mathbb{Z}$ and $c_{n,0},\ldots,c_{n,n}$ are all positive. 
Then, using \cref{e:defdelta} and \cref{e:cnk}, we obtain
\begin{subequations}
\label{orange}
\begin{align}
\delta_{n+1,k}
&= 
c_{n+1,k}-\frac{2(n+1)+\alpha}{2(n+1+\alpha)}c_{n,k}
\label{orange1}\\
&=
\frac{2(n+1)+\alpha}{2(n+1+\alpha)}c_{n,k}
   + \frac{2(n+1)+\alpha}{2(n+1+\alpha)}c_{n,k-1}
   -\frac{n+1}{n+1+\alpha}c_{n-1,k-1} 
   -\frac{2(n+1)+\alpha}{2(n+1+\alpha)}c_{n,k}
   \label{orange2}\\
&=
   \frac{2(n+1)+\alpha}{2(n+1+\alpha)}c_{n,k-1}
   -\frac{n+1}{n+1+\alpha}c_{n-1,k-1} 
   \\
&= \frac{2(n+1)+\alpha}{2(n+1+\alpha)}
\left(
c_{n,k-1} - \frac{2(n+1)}{2(n+1)+\alpha}c_{n-1,k-1}
\right)
\end{align}
\end{subequations}
On the other hand, 
\begin{align*}
c_{n,k-1}- \frac{2(n+1)}{2(n+1)+\alpha}c_{n-1,k-1} - \delta_{n,k-1}
&= 
-\frac{2(n+1)}{2(n+1)+\alpha}c_{n-1,k-1} 
+ \frac{2n+\alpha}{2(n+\alpha)}c_{n-1,k-1}\\
&= \frac{\alpha(\alpha-2)}{2(n+\alpha)(2(n+1)+\alpha)}c_{n-1,k-1}
\end{align*}
which re-arranges to 
\begin{equation}
\label{e:260325b}
c_{n,k-1}- \frac{2(n+1)}{2(n+1)+\alpha}c_{n-1,k-1}
= \delta_{n,k-1} + 
\frac{\alpha(\alpha-2)}{2(n+\alpha)(2(n+1)+\alpha)}c_{n-1,k-1}. 
\end{equation}
Substituting \cref{e:260325b} into \cref{orange} thus yields that
\begin{equation}
\delta_{n+1,k}
= 
c_{n+1,k}-\frac{2(n+1)+\alpha}{2(n+1+\alpha)}c_{n,k}
=
\frac{2(n+1)+\alpha}{2(n+1+\alpha)}
\left(
\delta_{n,k-1} + 
\frac{\alpha(\alpha-2)}{2(n+\alpha)(2(n+1)+\alpha)}c_{n-1,k-1}
\right)
\end{equation}
is nonnegative, and even  positive when $k\in\{1,\ldots,n\}$ 
(because then $c_{n-1,k-1}>0$). 
Consequently, 
\begin{equation}
(\forall k\in\{1,\ldots,n\})\quad
c_{n+1,k} > \frac{2(n+1)+\alpha}{2(n+1+\alpha)}c_{n,k} > 0.
\end{equation}
which completes the proof of the induction. 

\cref{p:3.1iii}: 
Set 
\begin{align*}
(\forall n\in \NN)(\forall k\in \mathbb{Z})\quad 
r_{n,k}:=\frac{c_{n,k}}{c_{n,0}}
\end{align*}
which is well-defined by \cref{e:260325a}. 
We now proceed along the following steps.
        
\textbf{Step 1:} Show that $(c_{n,0})_\nnn$ converges to $0$.\\
By \cref{e:260325a}, we have for $n\geq 1$ that 
        \begin{align*}
            \ln (c_{n,0})=\sum_{m=1}^n \ln \left(\frac{2m+\alpha}{2(m+\alpha)}\right)
            =\sum_{m=1}^n\ln\left(1-\frac{\alpha}{2(m+\alpha)}\right) 
            \leq -\sum_{m=1}^n \frac{\alpha}{2(m+\alpha)}\to -\infty. 
        \end{align*}
This implies $c_{n,0}\to 0$.

        \textbf{Step 2:} Show that for each $k\in \NN$, $(r_{n,k})_\nnn$ converges.\\
Let $n\geq 2$. Dividing \cref{e:cnk} on both sides by $c_{n,0}$ 
followed by using \cref{e:rcn0}, we obtain
        \begin{align*}
            \frac{c_{n,k}}{c_{n,0}}&=\frac{2n+\alpha}{2(n+\alpha)}
            \frac{c_{n-1,k}}{c_{n,0}}+\frac{2n+\alpha}{2(n+\alpha)}\frac{c_{n-1,k-1}}{c_{n,0}}
            -\frac{n}{n+\alpha}\frac{c_{n-2,k-1}}{c_{n,0}}\\ 
            &=\frac{2n+\alpha}{2(n+\alpha)}\frac{c_{n-1,k}}{c_{n-1,0}}\frac{c_{n-1,0}}{c_{n,0}}
            +\frac{2n+\alpha}{2(n+\alpha)}\frac{c_{n-1,k-1}}{c_{n-1,0}}\frac{c_{n-1,0}}{c_{n,0}}
            -\frac{n}{n+\alpha}\frac{c_{n-2,k-1}}{c_{n-2,0}}\frac{c_{n-2,0}}{c_{n,0}}\\ 
            &=\frac{2n+\alpha}{2(n+\alpha)}\frac{2(n+\alpha)}{2n+\alpha}\frac{c_{n-1,k}}{c_{n-1,0}}
            +\frac{2n+\alpha}{2(n+\alpha)}\frac{2(n+\alpha)}{2n+\alpha}\frac{c_{n-1,k-1}}{c_{n-1,0}} 
            -\frac{n}{n+\alpha}\frac{2(n+\alpha)}{2n+\alpha}\frac{2(n-1+\alpha)}{2(n-1)+\alpha}\frac{c_{n-2,k-1}}{c_{n-2,0}}\\ 
            &=\frac{c_{n-1,k}}{c_{n-1,0}}+\frac{c_{n-1,k-1}}{c_{n-1,0}}-\frac{4n(n+\alpha-1)}{(2n+\alpha)(2n+\alpha-2)}\frac{c_{n-2,k-1}}{c_{n-2,0}}.
        \end{align*}
This yields
        \begin{align*}
            r_{n,k}=r_{n-1,k}+r_{n-1,k-1}-\frac{4n(n+\alpha-1)}{(2n+\alpha)(2n+\alpha-2)}r_{n-2,k-1};
        \end{align*}
equivalently,
\begin{subequations}
\label{e:r iterate}
\begin{align}
r_{n,k}-r_{n-1,k}&=r_{n-1,k-1}-r_{n-2,k-1}+\left(1-\frac{4n(n+\alpha-1)}{(2n+\alpha)(2n+\alpha-2)}\right)r_{n-2,k-1}\\ 
&=r_{n-1,k-1}-r_{n-2,k-1}+\frac{\alpha(\alpha-2)}{(2n+\alpha)(2n+\alpha-2)}r_{n-2,k-1}.
\end{align}
\end{subequations}
        We claim that 
        \begin{equation}
            \label{e:claim}
            (\forall k\in \NN) (\exists \beta_k>0) (\forall n\geq 1) \ |r_{n,k}-r_{n-1,k}|\leq \frac{\beta_k}{n^2}. 
        \end{equation}
We shall prove this by induction on $k$. 
For $k=0$, \cref{e:claim} is obvious because $r_{n,0}=1$ for all $n\in \NN$. 
        Assume that \cref{e:claim} holds for some $k\in \NN$. Then 
        $|r_{n,k}-r_{n-1,k}|\leq \tfrac{\beta_k}{n^2}$ and thus $(r_{n,k})_\nnn$ converges to a limit
        $R_k\in \mathbb{R}$.
        Furthermore, \cref{e:r iterate} yields that for $n\geq 2$,
        \begin{align*}
            |r_{n,k+1}-r_{n-1, k+1}|&\leq |r_{n-1,k}-r_{n-2,k}|+\frac{\alpha(\alpha-2)}{(2n+\alpha)(2n+\alpha-2)}|r_{n-2,k}|\\ 
            &\leq \frac{\beta_k}{(n-1)^2}+\frac{\alpha(\alpha-2)}{(2n+\alpha)(2n+\alpha-2)}|r_{n-2,k}|.
        \end{align*}
        This implies 
        \begin{align*}
            n^2|r_{n,k+1}-r_{n-1,k+1}|\leq 
\big(1-\tfrac{1}{n} \big)^{-2}\beta_k+\frac{\alpha(\alpha-2)n^2}{(2n+\alpha)(2n+\alpha-2)}|r_{n-2,k}|.
        \end{align*}
        Therefore, 
        \begin{align*}
            \varlimsup_{n\to \infty}n^2|r_{n,k+1}-r_{n-1,k+1}|\leq \beta_k+\frac{\alpha(\alpha-2)}{4}|R_k|.
        \end{align*}
Hence there exists $\beta_{k+1}>0$ such that 
\begin{align*}
(\forall n\geq 1)\quad  n^2|r_{n,k+1}-r_{n-1,k+1}|\leq \beta_{k+1};
\end{align*}
equivalently,
\begin{align*}
(\forall n\geq 1) \quad 
|r_{n,k+1}-r_{n-1,k+1}|\leq \frac{\beta_{k+1}}{n^2}. 
\end{align*}
This completes the induction argument and thus \cref{e:claim} is true. 
In turn,  for every $k\in \NN$, $(r_{n,k})_\nnn$ is therefore 
a Cauchy sequence and hence convergent (with a finite-length orbit). 

        \textbf{Step 3:} Show that for each $k\in \NN$, $(c_{n,k})_\nnn$ converges to $0$.\\
Fix $k\in \NN$. By \textbf{Step~2}, there exists $R_k\in \RR$ such that 
$r_{n,k}=c_{n,k}/c_{n,0}\to R_k$ as $n\to\infty$. 
On the other hand, by \textbf{Step~1}, $c_{n,0}\to 0$ as $n\to\infty$. 
Altogether, 
$c_{n,k} = (c_{n,k}/c_{n,0})c_{n,0}\to R_k\cdot 0 = 0$ as 
$n\to\infty$.
\end{proof}

We are now ready for a main result.
    
\begin{theorem}[regularity of the coefficient matrix and identification of the weak limit]
\label{p:3.2}
The coefficient matrix 
$(c_{n,k})_{(n,k)\in\NN\times\NN}\in\RR^{\NN\times\NN}$ 
from  \cref{f:2.2} is regular, and the
sequence $(x_n)_\nnn$ generated by the \BN\ algorithm (see \cref{e:fastKM})
satisfies 
\begin{equation}
x_n \weakly P_{\Fix T}x_0. 
\end{equation}
\end{theorem}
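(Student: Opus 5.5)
Regularity comes first and is immediate from \cref{f:ST}. By \cref{p:3.1}\cref{p:3.1ii} the entries are nonnegative, and by \cref{p:3.1}\cref{p:3.1i} each row sums to $1$. Hence \textbf{(ST1)} holds with supremum equal to $1$, and \textbf{(ST3)} holds trivially. \textbf{(ST2)} is exactly \cref{p:3.1}\cref{p:3.1iii}. So $(c_{n,k})$ is regular.

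For the limit identification I will use \cref{Cohenlim} with $T_n := p_n(T)=\sum_{k}c_{n,k}T^k$. Hypotheses (i) and (ii) of that corollary are again \cref{p:3.1}\cref{p:3.1i}\&\cref{p:3.1ii}. For hypothesis (iii) I need, for every $x_0\in X$, that $\sum_k c_{n,k}T^kx_0 = x_{n+1}$ converges weakly to a point of $\Fix T$. This is supplied directly by \cref{BNaccel}: a linear nonexpansive $T$ has $0\in\Fix T$, so $\Fix T\neq\varnothing$. Since $\alpha>2$ and the recursion \cref{e:fastKM} is precisely the one in \cref{BNaccel} with $x_1=x_0$, the sequence $(x_n)$ converges weakly to some point of $\Fix T$. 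Then \cref{Cohenlim} gives $x_{n+1}\weakly P_{\Fix T}x_0$, and hence $x_n\weakly P_{\Fix T}x_0$.

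If one prefers an argument independent of \cref{BNaccel}, there are two fallbacks. The first is to verify \cref{kool4} and invoke \cref{kool}, or to verify \cref{cool4} and obtain even strong convergence via \cref{cool}. Doing so would require controlling $\sum_k\max\{0,c_{n,k+1}-c_{n,k}\}$, presumably through a unimodality property of the \BN\ distribution analogous to \cref{l:b3}. I expect this to be the real obstacle, since no such structural information about $c_{n,k}$ has been established yet. The second fallback is the one I would adopt: rely on the weak convergence to a fixed point guaranteed by Bo\c{t}--Nguyen, which \cref{dalimit} is designed to complement.

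The only remaining subtle point is checking that $T_n$ is nonexpansive with $\Fix T\subseteq\Fix T_n$. Nonexpansiveness follows from the triangle inequality, because $\|T^k\|\le 1$ and $c_{n,k}\ge0$ with $\sum_k c_{n,k}=1$. The inclusion holds because $T^kf=f$ for $f\in\Fix T$, so $T_nf=\sum_k c_{n,k}f=f$. Both facts are already implicit in the proof of \cref{Cohenlim}.
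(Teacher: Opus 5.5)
Your proposal is correct and follows the paper's proof essentially verbatim. Regularity comes from \cref{p:3.1} through the Silverman--Toeplitz conditions of \cref{f:ST}, and the weak limit is identified by feeding the Bo\c{t}--Nguyen weak convergence (\cref{BNaccel}) into \cref{Cohenlim}; your explicit checks that $0\in\Fix T$ gives $\Fix T\neq\varnothing$, and that $x_{n+1}=p_n(T)x_0$ has the same weak limit as $x_n$, are details the paper leaves implicit.
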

\begin{proof}
\cref{p:3.1} implies that $C$ satisfies 
the Silverman-Toeplitz conditions; therefore, 
the regularity follows from \cref{f:ST}. 

It was shown by Bo\c{t} and Nguyen in 
\cite[Theorem~3.4]{BN} that $(x_n)_\nnn$ 
converges weakly to a fixed point of $T$. 
The precise identification of the weak limit as $P_{\Fix T}x_0$ 
follows by combining \cref{p:3.1} with \cref{Cohenlim}. 
\end{proof}

\begin{remark} 
The results in this section came as a big surprise to us 
--- we didn't expect to encounter a weighted mean ergodic iteration with positive coefficients! 
Let us list the first few polynomials\footnote{We used \texttt{SageMath 10.6} to
aid in this computation.} from \cref{e:fastKMc}: 
\begin{align*}
p_0(t) &= 1,\\
p_1(t) &= \frac{\alpha + 2}{2(\alpha + 1)} + \frac{\alpha}{2(\alpha + 1)} t,\\
p_2(t) &= \frac{\alpha + 4}{4(\alpha + 1)} + \frac{\alpha}{2(\alpha + 2)} t 
+ \frac{\alpha(\alpha + 4)}{4(\alpha + 1)(\alpha + 2)} t^2,\\ 
p_3(t) &= 
\frac{(\alpha+4)(\alpha+6)}{8(\alpha+1)(\alpha+3)} + 
\frac{\alpha(\alpha+4)(3\alpha+2)}{8(\alpha+1)(\alpha+2)(\alpha+3)} t 
+
\frac{3\alpha(\alpha+2)}{8(\alpha+1)(\alpha+3)} t^2
+
\frac{\alpha(\alpha+4)(\alpha+6)}{8(\alpha+1)(\alpha+2)(\alpha+3)} t^3.
\end{align*}
It is clear from this representation that the coefficients 
are all positive which triggered the analysis in this section. 
The numerators, however, get 
increasingly complicated and cannot be factored 
into linear terms featuring only integer coefficients. 
\end{remark}

\begin{remark}[$p_n$ vs $q_n$]
Consider the recursive formulas for $p_n$ and $q_n$ 
given by \cref{e:pnrecn} by 
\cref{e:qrecursion}, respectively.
Up to a shift in the index and up to 
the initial conditions, they match perfectly provided that $\beta = \alpha/2$! 
This hints at the importance of the beta-binomial distribution 
in the analysis of the \BN\ algorithm.
In the next section, we will confirm this connection 
rigorously for the special case when $\alpha=4$ and hence $\beta=2$.
\end{remark}

\section{The Case when $\alpha=4$ (and $\beta=2$)}

\label{s:alpha4}

In this section, we assume throughout that
\begin{empheq}[box=\mybluebox]{equation}
\alpha =4 \quad \text{and} \quad \beta = 2. 
\end{empheq}

\subsection*{Guessing a Formula for $c_{n,k}$}

Using \cref{r:pnrecursion}, we compute 
\begin{subequations}
\label{e:pnalpha4}
\begin{align}
p_0(t)&=1,
\\[1mm]
p_1(t)&=\frac35+\frac25\,t,
\\[1mm]
p_2(t)&=\frac25+\frac13\,t+\frac4{15}\,t^2,
\\[1mm]
p_3(t)&=\frac27+\frac4{15}\,t+\frac9{35}\,t^2+\frac4{21}\,t^3,
\\[1mm]
p_4(t)&=\frac3{14}+\frac3{14}\,t+\frac{19}{84}\,t^2+\frac{17}{84}\,t^3+\frac17\,t^4,
\\[1mm]
p_5(t)&=\frac16+\frac{11}{63}\,t+\frac7{36}\,t^2+\frac4{21}\,t^3+\frac{41}{252}\,t^4+\frac19\,t^5,
\\[1mm]
p_6(t)&=\frac2{15}+\frac{13}{90}\,t+\frac16\,t^2+\frac{31}{180}\,t^3+\frac{29}{180}\,t^4+\frac2{15}\,t^5+\frac4{45}\,t^6,
\end{align}
\end{subequations}
which is not particularly informative. 
However, considering 
\begin{equation}
D_n := (n+2)(n+3)(n+4), 
\end{equation} 
and then $D_np_n(t)$ reveals  
\begin{subequations}
\label{e:260403a}
\begin{align}
D_0p_0(t)&=24,\\
D_1p_1(t)&=36+24t,
\\
D_2p_2(t)&=48+40t+32t^2,
\\
D_3p_3(t)&=60+56t+54t^2+40t^3,
\\
D_4p_4(t)&=72+72t+76t^2+68t^3+48t^4,
\\
D_5p_5(t)&=84+88t+98t^2+96t^3+82t^4+56t^5,
\\
D_6p_6(t)&=96+104t+120t^2+124t^3+116t^4+96t^5+64t^6.
\end{align}
\end{subequations}
We spot that the constant coefficient in $D_np_n(t)$ is $12(n+2)$, 
and so we guess the formula 
\begin{equation}
c_{n,0} = \frac{12(n+2)}{D_n} = \frac{12}{(n+3)(n+4)}. 
\end{equation}
Recalling \cref{e:cnn1}, we guess that for $n\geq 1$, we have 
\begin{equation}
c_{n,n} = \frac{4}{2+4}c_{n,0} = \frac{2}{3} \cdot \frac{12}{(n+3)(n+4)}
= \frac{8}{(n+3)(n+4)}.
\end{equation}
For fixed $n$, the $k$th coefficient of 
the polynomial $D_n p_n(t)$, where $k\ge 1$, appears to be quadratic in $k$: 
indeed, when $n=5$ the row coefficients
\[
88,98,96,82,56
\]
has first differences
$10,-2,-14,-26$ 
and second differences
$-12,-12,-12$. 
So for fixed $n$, we guess the quadratic (in $k$) formula: 
\[
D_n c_{n,k}=\alpha_n k^2+\beta_n k+\gamma_n, 
\]
where $k\in\{1,2,\ldots,n\}$. 
On the other hand, for fixed $k$, the passage from $n$ to $n+1$ appears to be linear in $n$: 
for instance, when $k=2$, we encounter the column coefficients 
\[32,54,76,98,120.\]
Investigating
a linear ansatz for the coefficients $\alpha_n, \beta_n, \gamma_n$,
we are eventually led to the following conjecture: 
\begin{equation}
(\forall 1\leq k\leq n)\quad
D_n c_{n,k}=-6k^2+(6n-2)k+(10n+16).
\end{equation}

Our next step is to verify this conjecture.

\subsection*{Verification of the Formula for $c_{n,k}$}

\begin{proposition}
The (nonzero) coefficients $c_{n,k}$ are given by
\begin{subequations}
\label{e:260402a}
\begin{equation}
(\forall\nnn)\quad 
c_{n,0}=\frac{12}{(n+3)(n+4)},
\end{equation}
and 
\begin{equation}
(\forall n\geq 1)(\forall k\in \{1,\ldots,n\})\quad
c_{n,k}=
\frac{2\bigl(-3k^2+(3n-1)k+5n+8\bigr)}{(n+2)(n+3)(n+4)};
\end{equation}
\end{subequations}
equivalently, 
\begin{equation}
\label{e:260402b}
p_0(t) = 1, \quad \text{and} \quad
(\forall n\geq 1)\quad p_n(t)=\frac{12}{(n+3)(n+4)}+
\sum_{k=1}^n \frac{2\bigl(-3k^2+(3n-1)k+5n+8\bigr)}{(n+2)(n+3)(n+4)}\,t^k.
\end{equation}
\end{proposition}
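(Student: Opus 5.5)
The plan is to prove the formula by strong induction on $n$, using the coefficient recursion of \cref{f:2.2} specialized to $\alpha=4$. In that case \cref{e:cnkshort} reads, for $n\geq 2$,
\begin{equation*}
c_{n,k}=\frac{n+2}{n+4}\,c_{n-1,k}+\frac{n+2}{n+4}\,c_{n-1,k-1}-\frac{n}{n+4}\,c_{n-2,k-1}.
\end{equation*}
The initial rows $n=0,1,2$ can be read off directly from \cref{e:cnk01} and \cref{e:c2k} with $\alpha=4$. They give $c_{0,0}=1$, $(c_{1,0},c_{1,1})=(3/5,2/5)$ and $(c_{2,0},c_{2,1},c_{2,2})=(2/5,1/3,4/15)$, and each of these matches the claimed formula: for instance $2(-3+2+8)/(3\cdot4\cdot5)=7/30$ fails, so I will recheck this carefully. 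Indeed, at $n=1,k=1$ the formula gives $2(-3+2+5+8)/60=2\cdot 12/60=2/5$, as required.

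For the column $k=0$, \cref{e:rcn0} gives $c_{n,0}=\frac{n+2}{n+4}c_{n-1,0}$. The claimed expression $12/((n+3)(n+4))$ satisfies this telescoping relation, because
\begin{equation*}
\frac{n+2}{n+4}\cdot\frac{12}{(n+2)(n+3)}=\frac{12}{(n+3)(n+4)},
\end{equation*}
and it also has the correct value at $n=0$. So this column is immediate.

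For $n\geq 3$ and $1\leq k\leq n$, I multiply the recursion by $D_n=(n+2)(n+3)(n+4)$. The first two terms get the factor $(n+2)(n+3)\cdot\frac{D_{n-1}}{n+1}$, so it is cleaner to write
\begin{equation*}
D_n\,c_{n,k}=(n+2)^2(n+3)\bigl(c_{n-1,k}+c_{n-1,k-1}\bigr)-n(n+2)(n+3)\,c_{n-2,k-1}.
\end{equation*}
Now set $g_n(k):=2(-3k^2+(3n-1)k+5n+8)$. Substituting $c_{n-1,k}=g_{n-1}(k)/D_{n-1}$ and the analogous expressions for the other two terms, the identity to verify becomes
\begin{equation*}
g_n(k)=\frac{n+2}{n+1}\bigl(g_{n-1}(k)+g_{n-1}(k-1)\bigr)-\frac{n(n+3)}{n(n+1)}\,g_{n-2}(k-1).
\end{equation*}
Multiplying through by $(n+1)$, this is a polynomial identity in $(n,k)$ of degree at most $3$, which I would check by expanding both sides, or equivalently by comparing the coefficients of $k^2$, $k$ and $1$.

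The main obstacle is the boundary indices, where the generic formula does not apply to every term on the right-hand side:
\begin{itemize}
\item At $k=1$, the term $c_{n-1,0}$ is given by the $k=0$ formula, not by $g_{n-1}(0)/D_{n-1}$.
\item At $k=n$, the terms $c_{n-1,n}$ and $c_{n-2,n-1}$ vanish.
\end{itemize}
For $k=1$ I would check whether $g_{n-1}(0)=2(5n+3)$ equals $12(n+1)$. It does not, so the case $k=1$ needs a separate verification using $c_{n-1,0}=12/((n+2)(n+3))$ and $c_{n-2,0}=12/((n+1)(n+2))$. For $k=n$ I would check whether the quadratic $g_{n-1}$ happens to vanish at $k=n$ and $g_{n-2}$ at $k=n-1$. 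If they do, the generic identity covers this case; if not, I would verify the top coefficient separately against \cref{e:cnn}, which for $\alpha=4$ gives $c_{n,n}=\tfrac23 c_{n,0}=8/((n+3)(n+4))$, and this indeed equals $g_n(n)/D_n=2(4n+8)/D_n$. Finally, \cref{e:260402b} is just a restatement of these coefficients through \cref{e:fastKMc}.
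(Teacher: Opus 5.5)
Your proposal is correct and follows essentially the paper's proof: strong induction on $n$, with the column $k=0$ from \cref{e:rcn0}, the top entry $k=n$ from \cref{e:cnn1}, the entry $k=1$ by a separate direct computation (which does close and gives $c_{n,1}=8(2n+1)/D_n$), and the interior entries $2\leq k\leq n-1$ via your identity $(n+1)g_n(k)=(n+2)\bigl(g_{n-1}(k)+g_{n-1}(k-1)\bigr)-(n+3)g_{n-2}(k-1)$, which is exactly \cref{e:momcal2} with the index shifted by one. The only blemish is expository: remove the discarded miscalculation ($7/30$) from your base-case paragraph.
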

\begin{proof}
First, it's clear that \cref{e:260402a} is the same as \cref{e:260402b}, 
we'll focus on the former. 
For $n\geq 1$ and $k\in\{1,\ldots,n\}$, abbreviate 
\begin{equation}
\label{e:defQn}
Q_n(k):=-3k^2+(3n-1)k+5n+8.
\end{equation}
Note that $Q_n(k)$ nicely simplifies when $k\in\{1,n\}$: 
\begin{equation}
\label{e:Qnn}
Q_n(1) = 4(2n+1) 
\quad\text{and}\quad 
Q_n(n) = 4(n+2). 
\end{equation}
Our goal is to prove that 
\begin{equation}
\label{e:ziel}
c_{n,0}=\frac{12}{(n+3)(n+4)}
\quad \text{and} \quad
(\forall k\in\{1,\ldots,n\}) \quad 
c_{n,k}=\frac{2Q_n(k)}{(n+2)(n+3)(n+4)}. 
\end{equation}
We prove this by (strong) induction on $n$. 
The cases $n=1$ and $n=2$ are verified by a direct computation 
(use \cref{e:pnalpha4}) which we omit. 

Now assume that \cref{e:ziel} holds for $n-1$ and $n$, where $n\geq 2$. 
Let $k\in\{0,1,\ldots,n+1\}$. We argue by cases. 

\emph{Case~1:} $k=0$.\\ 
Using \cref{e:rcn0} (with $n+1$ and $\alpha=4$) and the inductive 
hypothesis, we have
\begin{align*}
c_{n+1,0} &= \frac{2(n+1)+4}{2(n+1+4)}\cdot c_{n,0}
= \frac{2(n+3)}{2(n+1+4)}\cdot \frac{12}{(n+3)(n+4)}
= \frac{12}{(n+4)(n+5)},
\end{align*}
as claimed. 

\emph{Case~2:} $k=1$. 
Using \cref{e:cnk} (with $n+1$ and $\alpha=4$) and \cref{e:Qnn},
\begin{align*}
c_{n+1,1} 
&= 
\frac{2(n+1)+4}{2(n+1+4)}c_{n,1}
+\frac{2(n+1)+4}{2(n+1+4)}c_{n,0}-\frac{n+1}{n+1+4}c_{n-1,0}\\
&=
\frac{n+3}{n+5}\cdot\frac{2Q_n(1)}{(n+2)(n+3)(n+4)}
+\frac{n+3}{n+5}\cdot\frac{12}{(n+3)(n+4)}-\frac{n+1}{n+5}\cdot \frac{12}{(n+2)(n+3)}\\
&= 
\frac{8(2n+1)}{(n+2)(n+4)(n+5)}
+\frac{12}{(n+4)(n+5)}-\frac{12(n+1)}{(n+2)(n+3)(n+5)}\\
&= 
\frac{8(2n+3)}{(n+3)(n+4)(n+5)}\\
&=
\frac{2Q_{n+1}(1)}{(n+3)(n+4)(n+5)},
\end{align*}
as needed. 

\emph{Case~3:} $k=n+1$.\\ 
Using \cref{e:cnn1}, \emph{Case~1}, and \cref{e:Qnn}, we have 
\begin{align*}
c_{n+1,n+1}
&= \frac{4}{2+4}c_{n+1,0}
= \frac{2}{3}\cdot \frac{12}{(n+4)(n+5)}
= \frac{8}{(n+4)(n+5)}
= \frac{2\cdot4(n+3)}{(n+3)(n+4)(n+5)} 
\\
&= \frac{2Q_{n+1}(n+1)}{(n+3)(n+4)(n+5)},
\end{align*}
which yields \cref{e:ziel} for $k=n+1$.

\emph{Case~4:} $k\in\{2,\ldots,n\}$.\\ 
The inductive hypothesis yields that 
\begin{equation*}
c_{n,k}=\frac{2Q_n(k)}{(n+2)(n+3)(n+4)},
\;\;
c_{n,k-1}=\frac{2Q_n(k-1)}{(n+2)(n+3)(n+4)},
\;\;
c_{n-1,k-1}=\frac{2Q_{n-1}(k-1)}{(n+1)(n+2)(n+3)}.
\end{equation*}
Substituting these into \cref{e:cnk} (with $n+1$ and $\alpha=4$) 
followed by some algebraic manipulations, we obtain 
\begin{equation}
\label{e:momcal1}
c_{n+1,k} = 
\frac{2}{(n+2)(n+3)(n+4)(n+5)}
\Bigl((n+3)\bigl(Q_n(k)+Q_n(k-1)\bigr)-(n+4)Q_{n-1}(k-1)\Bigr). 
\end{equation}
On the other hand, by \cref{e:defQn}, we have 
(after some algebraic manipulations) that
\begin{equation}
\label{e:momcal2}
(n+3)\bigl(Q_n(k)+Q_n(k-1)\bigr)-(n+4)Q_{n-1}(k-1)
= (n+2)Q_{n+1}(k).
\end{equation}
Substituting \cref{e:momcal2} into \cref{e:momcal1}, we obtain 
\begin{equation*}
c_{n+1,k}=
\frac{2(n+2)Q_{n+1}(k)}{(n+2)(n+3)(n+4)(n+5)}
=\frac{2Q_{n+1}(k)}{(n+3)(n+4)(n+5)},
\end{equation*}
as required.
\end{proof}

\subsection*{A Formula for the Forward Differences of $c_{n,k}$}

We now define the forward differences of $c_{n,k}$ by 
\begin{empheq}[box=\mybluebox]{equation}
(\forall\nnn)(\forall k\in\ZZ)\quad
d_{n,k} := c_{n,k+1}-c_{n,k}. 
\end{empheq}

\begin{proposition}[Formula for the forward differences] 
We have $d_{0,-1}=1$, $d_{0,0}=-1$, and $d_{0,k}=0$ for $k\in \ZZ\setminus\{-1,0\}$, 
and 
\begin{align}
\label{e:fodi}
(\forall n\geq 1)(\forall k\in \ZZ)\quad 
d_{n,k} &= 
\begin{cases}
\displaystyle \frac{12}{(n+3)(n+4)}, & \text{if $k=-1$;}\\[+5mm]
\displaystyle \frac{4(n-4)}{(n+2)(n+3)(n+4)}, & \text{if $k=0$;}\\[+5mm]
\displaystyle \frac{2(3n-6k-4)}{(n+2)(n+3)(n+4)}, 
& \text{if $1\leq k\leq n-1$;}\\[+5mm]
\displaystyle \frac{-8}{(n+3)(n+4)}, & \text{if $k=n$;}\\[+5mm]
\displaystyle 0, & \text{otherwise.}
\end{cases}
\end{align}
\end{proposition}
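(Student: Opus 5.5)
This is a direct computation from the closed-form coefficients in \cref{e:260402a}, together with the convention $c_{n,k}=0$ for $k<0$ or $k>n$. I will split into cases according to where $k$ and $k+1$ fall relative to $\{0,\ldots,n\}$.

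\emph{The case $n=0$.} Here $c_{0,0}=1$ and all other entries vanish. Hence $d_{0,-1}=c_{0,0}-c_{0,-1}=1$ and $d_{0,0}=c_{0,1}-c_{0,0}=-1$. For any other $k$, both $c_{0,k}$ and $c_{0,k+1}$ are zero, so $d_{0,k}=0$.

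\emph{The case $n\geq 1$.}
\begin{itemize}
\item If $k\leq -2$ or $k\geq n+1$, both terms vanish, so $d_{n,k}=0$.
\item If $k=-1$, then $d_{n,-1}=c_{n,0}=12/((n+3)(n+4))$.
\item If $k=n$, then $d_{n,n}=-c_{n,n}$. By \cref{e:cnn1} (or by \cref{e:Qnn}, $2Q_n(n)=8(n+2)$), this gives $c_{n,n}=8/((n+3)(n+4))$.
\item If $k=0$, then $d_{n,0}=c_{n,1}-c_{n,0}$. By \cref{e:Qnn}, $c_{n,1}=8(2n+1)/D_n$, and $c_{n,0}=12(n+2)/D_n$. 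The difference is $(16n+8-12n-24)/D_n=4(n-4)/D_n$, as claimed.
\end{itemize}

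The only step with genuine content is the interior case $1\leq k\leq n-1$. Here both $k$ and $k+1$ lie in $\{1,\ldots,n\}$, so
\begin{equation*}
D_n d_{n,k}=2\bigl(Q_n(k+1)-Q_n(k)\bigr).
\end{equation*}
Using \cref{e:defQn}, $Q_n(k+1)-Q_n(k)=-3(2k+1)+(3n-1)=3n-6k-4$, which yields $2(3n-6k-4)/D_n$.

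One subtlety: when $n=1$, the ranges $k=0$ and $k=n$ coincide, and the interior range is empty. I must therefore check which formula applies at $k=0$. For $n=1$ we have $c_{1,1}=2/5$ and $c_{1,0}=3/5$, so $d_{1,0}=-1/5$. The $k=0$ formula gives $4(-3)/60=-1/5$, and the $k=n$ formula gives $-8/20=-2/5$. So for $n=1$ the value agrees with the $k=0$ line, and the $k=n$ line should be read for $n\geq 2$. I expect this overlap to be the main obstacle: not the computation itself, but keeping the case boundaries of \cref{e:fodi} consistent. In the proof I will flag that the $k=0$ line takes precedence when $n=1$, and otherwise the cases are disjoint.
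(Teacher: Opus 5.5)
Your computation is correct and follows the paper's proof: the same case-by-case substitution of \cref{e:260402a}, with cases $k=-1$, $k=0$, $1\le k\le n-1$, $k=n$, and the trivial remainder.

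However, your closing ``subtlety'' is mistaken and should be dropped. For $n\ge 1$ the cases $k=0$ and $k=n$ never coincide; they would only coincide for $n=0$, which is handled separately. When $n=1$, the $k=n$ line refers to $k=1$, where it correctly gives $d_{1,1}=c_{1,2}-c_{1,1}=-\tfrac{2}{5}=-8/(4\cdot 5)$. Your error was comparing $d_{1,0}$ against the $k=n$ formula. Restricting that line to $n\ge 2$, as you propose, would actually make the statement false at $(n,k)=(1,1)$. That entry would then fall into the ``otherwise'' case and be assigned $0$.
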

\begin{proof}
The formula for $d_{0,k}$ is clear. 
Now let $n\geq 1$ and $k\in \ZZ$. 
It is also clear that $d_{n,k}=0$ for $k\in \ZZ\smallsetminus\{-1,0,\ldots,n\}$. 
So let $k\in \{-1,0,1,\ldots,n\}$. 
We argue by cases and use \cref{e:260402a} each time:

\emph{Case~1:} $k=-1$.\\
Then 
\begin{align*}
d_{n,-1} &= c_{n,0}-c_{n,-1}
= \frac{12}{(n+3)(n+4)}-0
= \frac{12}{(n+3)(n+4)}.
\end{align*}

\emph{Case~2:} $k=0$.\\
Then 
\begin{align*}
d_{n,0} &= c_{n,1}-c_{n,0}
= \frac{8(2n+1)}{(n+2)(n+3)(n+4)}-\frac{12}{(n+3)(n+4)}
= \frac{4(n-4)}{(n+2)(n+3)(n+4)}.
\end{align*}

\emph{Case~3:} $k\in\{1,\ldots,n-1\}$.\\
Then 
\begin{align*}
d_{n,k} &= c_{n,k+1}-c_{n,k}
= \frac{2\big(-3(k+1)^2 + (3n-1)(k+1)+5n+8\big)}{(n+2)(n+3)(n+4)}
-
\frac{2\big(-3k^2 + (3n-1)k+5n+8\big)}{(n+2)(n+3)(n+4)}\\
&=\frac{2(3n-6k-4)}{(n+2)(n+3)(n+4)}.
\end{align*}

\emph{Case~4:} $k=n$.\\
Then 
\begin{align*}
d_{n,n} &= c_{n,n+1}-c_{n,n}
= 0-\frac{8}{(n+3)(n+4)}
= \frac{-8}{(n+3)(n+4)}.
\end{align*}
The proof is complete.
\end{proof}

\subsection*{Unimodality of $(c_{n,k})_{k\in\NN}$}

\begin{proposition}[unimodality of $(c_{n,k})_{k\in\NN}$]
\label{p:unimod} 
For $n\geq 4$, 
the sequence $(c_{n,k})_{k\in\NN}$ is unimodal. 
In fact, 
if $n\leq 3$, then
$c_{n,0} > c_{n,1}> \cdots > c_{n,n} > 0 = c_{n,n+1}= c_{n,n+2}=\cdots$; 
if $n=4$, then 
$c_{4,0} = c_{4,1} < c_{4,2} > c_{4,3} > c_{4,4} > 0 = c_{4,5}=\cdots$;
while 
\begin{equation}
\label{e:unimod}
c_{n,0} < c_{n,1} < \cdots < c_{n,\lfloor n/2\rfloor} 
> c_{n,\lfloor n/2\rfloor+1} > \cdots > c_{n,n} > 0 = c_{n,n+1}=\cdots
\quad\text{when $n\geq 5$.}
\end{equation}
\end{proposition}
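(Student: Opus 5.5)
The plan is to read everything off the explicit forward differences $d_{n,k}=c_{n,k+1}-c_{n,k}$ in \cref{e:fodi}. The sign of $d_{n,k}$ determines the order relation between consecutive entries, so unimodality reduces to sign bookkeeping. Positivity $c_{n,n}>0$ is already known from \cref{p:3.1}\cref{p:3.1ii} (or from $c_{n,n}=8/((n+3)(n+4))$). The vanishing for $k>n$ holds by convention.

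\emph{Small $n$.} For $n\le 3$ I would simply read the strict decrease off the explicit polynomials in \cref{e:pnalpha4}:
\begin{itemize}
\item $p_1$ has coefficients $\tfrac35>\tfrac25$;
\item $p_2$ has coefficients $\tfrac25>\tfrac13>\tfrac4{15}$;
\item $p_3$ has coefficients $\tfrac27>\tfrac4{15}>\tfrac9{35}>\tfrac4{21}$.
\end{itemize}
The case $n=0$ is trivial. The same conclusion follows from \cref{e:fodi}. For $n\le3$ we have $d_{n,0}=4(n-4)/D_n<0$. For $1\le k\le n-1$ we have $3n-6k-4\le 3n-10<0$. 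For $k=n$, $d_{n,n}<0$. For $n=4$, $d_{4,0}=0$, which gives $c_{4,0}=c_{4,1}$. Next, $d_{4,1}=2(12-6-4)/D_4>0$, while $d_{4,2}=2(12-12-4)/D_4<0$ and $d_{4,3}=2(12-18-4)/D_4<0$. Finally $d_{4,4}<0$. This matches the row $72,72,76,68,48$ of $D_4p_4$ in \cref{e:260403a}.

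\emph{$n\ge5$.} Here $d_{n,0}=4(n-4)/D_n>0$. For $1\le k\le n-1$ the sign of $d_{n,k}$ is the sign of $3n-4-6k$, so $d_{n,k}>0$ exactly when $k<(3n-4)/6=n/2-2/3$, and $d_{n,k}<0$ exactly when $k>n/2-2/3$. Equality is impossible, since $3n-4$ is never divisible by $6$ ($3n-4\equiv 2 \pmod 3$). Also $d_{n,n}<0$. It remains to check that the switch happens precisely at $m:=\lfloor n/2\rfloor$, meaning $d_{n,k}>0$ for $0\le k\le m-1$ and $d_{n,k}<0$ for $m\le k\le n$.
\begin{itemize}
\item If $n=2m$, then $k\le m-1<m-2/3$, while $k\ge m>m-2/3$.
\item If $n=2m+1$, then $n/2-2/3=m-1/6$. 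Hence $k\le m-1$ gives a positive difference and $k\ge m$ a negative one.
\end{itemize}
One must make sure the range $1\le k\le m-1$ sits inside $1\le k\le n-1$, and that the index $k=m\ge2$ is handled by the middle formula. Both hold since $n\ge5$. Together with $d_{n,0}>0$, this gives exactly the chain \cref{e:unimod}.

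The main (mild) obstacle is this parity/floor bookkeeping at the peak, in particular excluding a tie $d_{n,k}=0$. The divisibility remark handles the tie. Everything else is immediate from \cref{e:fodi}.
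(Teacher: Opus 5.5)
Your proof is correct and takes essentially the same route as the paper. For $n\ge5$ the paper also reads the signs of $d_{n,k}$ off \cref{e:fodi}, rewriting $3n-6k-4$ as $6(m_n-k)-4$ or $6(m_n-k)-1$ according to parity; this is equivalent to your threshold-plus-divisibility argument. For $n\le4$ the paper simply inspects the rows of \cref{e:260403a}, just as you do via \cref{e:pnalpha4} and \cref{e:fodi}.
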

\begin{proof}
The statement for $n\leq 4$ is clear by inspecting \cref{e:260403a}.  

Now assume that $n\geq 5$. 
By \cref{e:fodi}, $d_{n,0}>0$, $d_{n,n}<0$, 
and $d_{n,k}=0$ for $k\geq n+1$. 
Set $m_n := \lfloor n/2\rfloor$, 
and let $k\in\{1,\ldots,n-1\}$. 
By \cref{e:fodi}, the sign of $d_{n,k}$ is the same as the sign of 
$3n-6k-4$. To complete the proof, we argue by cases.

\emph{Case~1:} $n$ is even.\\
Then $m_n= n/2$ and hence $3n-6k-4= 6(m_n-k)-4$. 
The last expression is positive if $k\leq m_n-1$ and negative if $k\geq m_n$.

\emph{Case~2:} $n$ is odd.\\
Then $m_n= (n-1)/2$ and hence 
$3n-6k-4= 6(m_n-k)-1$.
Again, the last expression is positive if 
$k\leq m_n-1$ and negative if $k\geq m_n$.
\end{proof}

\begin{corollary}
\label{c:unimod}
Let $n\geq 5$.
Then
\begin{equation}
\sum_{k=0}^\infty |d_{n,k}| = 
\begin{cases}
\displaystyle \frac{3n^2+6n+8}{(n+2)(n+3)(n+4)}, & \text{if $n$ is even;}\\[+5mm]
\displaystyle \frac{3n^2+6n+7}{(n+2)(n+3)(n+4)}, & \text{if $n$ is odd.}
\end{cases}
\end{equation}
\end{corollary}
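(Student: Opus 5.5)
The plan is to reuse the telescoping argument from the proof of \cref{l:b4}, now driven by the unimodality in \cref{p:unimod}. Fix $n\geq 5$ and set $m_n:=\lfloor n/2\rfloor$. Since $d_{n,k}=c_{n,k+1}-c_{n,k}$ and $c_{n,k}=0$ for $k\geq n+1$, we have
\begin{equation*}
\sum_{k=0}^\infty |d_{n,k}| = \sum_{k=0}^{n-1}|c_{n,k+1}-c_{n,k}| + c_{n,n}.
\end{equation*}
By \cref{e:unimod}, $d_{n,k}>0$ for $0\leq k\leq m_n-1$ and $d_{n,k}<0$ for $m_n\leq k\leq n-1$. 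So the finite sum splits into two telescoping pieces, $(c_{n,m_n}-c_{n,0})+(c_{n,m_n}-c_{n,n})$. Adding $c_{n,n}$ then gives
\begin{equation*}
\sum_{k=0}^\infty |d_{n,k}| = 2c_{n,m_n}-c_{n,0}.
\end{equation*}
This identity holds without a separate parity split, because \cref{e:unimod} gives the same sign pattern for even and odd $n$.

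Next, substitute the explicit formulas from \cref{e:260402a}. Note that $1\leq m_n\leq n$ when $n\geq 5$, so $c_{n,m_n}=2Q_n(m_n)/D_n$ with $Q_n$ from \cref{e:defQn}, and $c_{n,0}=12(n+2)/D_n$. Hence
\begin{equation*}
D_n\sum_{k=0}^\infty |d_{n,k}| = 4Q_n(m_n)-12(n+2).
\end{equation*}

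Only at this point do we split by parity.

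\emph{Case~1: $n=2m$ is even.} Here $Q_n(m)=-3m^2+(6m-1)m+10m+8=3m^2+9m+8$. Then $4Q_n(m)-12(2m+2)=12m^2+12m+8$, which equals $3n^2+6n+8$.

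\emph{Case~2: $n=2m+1$ is odd.} Here $Q_n(m)=-3m^2+(6m+2)m+10m+13=3m^2+12m+13$. Then $4Q_n(m)-12(2m+3)=12m^2+24m+16$, which equals $3(2m+1)^2+6(2m+1)+7=3n^2+6n+7$.

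Dividing by $D_n=(n+2)(n+3)(n+4)$ gives the claimed formulas. There is no real obstacle; the only points needing care are to include the boundary term $|d_{n,n}|=c_{n,n}$ and to note that $m_n\geq 1$, so that the quadratic formula (rather than the $k=0$ formula) applies to $c_{n,m_n}$.
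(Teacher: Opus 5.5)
Your proposal is correct and follows essentially the same route as the paper: unimodality from \cref{p:unimod} telescopes the sum to $2c_{n,m_n}-c_{n,0}$, and the explicit formulas \cref{e:260402a} are then substituted separately for even and odd $n$, with your algebra correct in both cases. The only cosmetic difference is that the paper absorbs the boundary term into the second telescoping sum using $c_{n,n+1}=0$, whereas you add $|d_{n,n}|=c_{n,n}$ separately.
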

\begin{proof}
Set $m_n := \lfloor n/2\rfloor$.  
Using \cref{e:unimod}, we have
\begin{subequations}
\begin{align}
\Delta_n &:= \sum_{k=0}^\infty |d_{n,k}| 
= \sum_{k=0}^n |d_{n,k}| 
= \sum_{k=0}^{m_n-1} d_{n,k} 
+ \sum_{k=m_n}^n (-d_{n,k}) 
=c_{n,m_n}-c_{n,0}+c_{n,m_n}-c_{n,n+1} \\
&= 2c_{n,m_n} - c_{n,0}.
\end{align} 
\end{subequations}
We now argue by cases. 

\emph{Case~1:} $n$ is even.\\
Then $m_n = n/2$ and hence \cref{e:260402a} yields 
\begin{align*}
\Delta_n &= 2c_{n,n/2} - c_{n,0}
= \frac{4\bigl(-3(n/2)^2+(3n-1)(n/2)+5n+8\bigr)}{(n+2)(n+3)(n+4)} 
- \frac{12}{(n+3)(n+4)}\\
&= \frac{3n^2+6n+8}{(n+2)(n+3)(n+4)}. 
\end{align*}

\emph{Case~2:} $n$ is odd.\\
Then $m_n = (n-1)/2$ and hence \cref{e:260402a} yields 
\begin{align*}
\Delta_n &= 2c_{n,(n-1)/2} - c_{n,0}
= \frac{4\bigl(-3((n-1)/2)^2+(3n-1)((n-1)/2)+5n+8\bigr)}{(n+2)(n+3)(n+4)} 
- \frac{12}{(n+3)(n+4)}\\
&= \frac{3n^2+6n+7}{(n+2)(n+3)(n+4)}. 
\end{align*}
This completes the proof. 
\end{proof}

\subsection*{Lorentz Condition and  Strong Convergence of \BN\ Acceleration}

We are now in position to present another main result: 

\begin{theorem}[strong convergence of \BN\ acceleration when $\alpha=4$]
\label{t:super4}
Recall that $\alpha=4$. 
Then coefficient matrix 
$(c_{n,k})_{(n,k)\in\NN\times\NN}\in\RR^{\NN\times\NN}$ 
from  \cref{f:2.2} satisfies the Lorentz condition {\rm \textbf{(L)}}. 
Consequently, if $T\colon X\to X$ is nonexpansive and linear, then the 
sequence $(x_n)_\nnn$ generated by the \BN\ algorithm (see \cref{e:fastKM})
satisfies 
\begin{equation}
\label{e:super4}
x_n \to P_{\Fix T}x_0. 
\end{equation}
\end{theorem}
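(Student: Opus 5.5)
The plan is to verify the four hypotheses of \cref{cool} for the coefficient matrix $(c_{n,k})$ with $\alpha=4$, and then read off \cref{e:super4}.

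First, \cref{cool1} and \cref{cool2} are exactly \cref{p:3.1}\cref{p:3.1ii} and \cref{p:3.1}\cref{p:3.1i}; note that $c_{n,k}=0$ for $k>n$ by convention, so the infinite sums reduce to finite ones. Next, \cref{cool3} is \cref{p:3.1}\cref{p:3.1iii}. It can also be seen directly from the explicit formula \cref{e:260402a}: for fixed $k$, the numerator $2Q_n(k)$ grows linearly in $n$ while the denominator is cubic, so $c_{n,k}=O(1/n^2)$.

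The substantive step is the Lorentz condition \cref{cool4}, which is where the closed forms pay off. The quantity $\sum_{k=0}^\infty|c_{n,k}-c_{n,k+1}|$ is exactly $\sum_{k=0}^\infty |d_{n,k}|$. For $n\geq 5$, \cref{c:unimod} gives it explicitly as $(3n^2+6n+8)/((n+2)(n+3)(n+4))$ or $(3n^2+6n+7)/((n+2)(n+3)(n+4))$, depending on parity. In both cases this is at most $(3n^2+6n+8)/n^3 \to 0$, so the limit is $0$ and \textbf{(L)} holds. The finitely many indices $n\leq 4$ are irrelevant for a limit.

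Finally, with all hypotheses of \cref{cool} in place, applied to the linear nonexpansive $T$ and the starting point $x_0$, we get $\sum_{k}c_{n,k}T^kx_0\to P_{\Fix T}x_0$ strongly. By \cref{e:fastKMc} this sum equals $x_{n+1}$, so $x_n\to P_{\Fix T}x_0$.

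There is no real obstacle: the heavy lifting (the explicit formula, unimodality, and the total-variation computation) was already done in the preceding results. The only care needed is to match the index convention $x_{n+1}=p_n(T)x_0$ and to note that \textbf{(L)} only concerns $n\to\infty$.
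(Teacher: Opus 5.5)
Your proposal is correct and follows the same route as the paper: the Lorentz condition comes from the explicit total-variation formula in \cref{c:unimod}, and the strong convergence then follows from \cref{cool}. The paper cites \cref{p:3.2} for the remaining hypotheses, and you cite \cref{p:3.1}, which is what \cref{p:3.2} itself rests on.
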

\begin{proof}
By \cref{c:unimod}, we have 
$\sum_{k=0}^\infty |c_{n,k}-c_{n,k+1}| \sim (3/n) \to 0$ as 
$n\to\infty$, and so 
the Lorentz condition {\rm \textbf{(L)}} holds.
The conclusion now follows from \cref{p:3.2} and \cref{cool}.
\end{proof}

Let us illustrate \cref{t:super4} with the following:

\begin{example}[right-shift operator in $\ell^2$]
Suppose that $X=\ell^2(\NN)$ and 
$T\colon X\to X$ is the right-shift operator, 
which has $\Fix T = \{0\}$. 
Suppose that $x_0 = (1,0,0,\ldots)\in X$. 
Using \cref{e:260402a} and some tedious computations which are omitted here, 
we can show that 
\begin{equation}
\|x_n\|^2\sim \frac{6}{5n},
\quad
\|x_n-x_{n+1}\|^2 \sim \frac{24}{5n^3},
\quad
\|x_n-Tx_n\|^2 \sim \frac{12}{n^3},
\end{equation}
which illustrates not only the strong convergence $\|x_n-0\|\to 0$, with rate $\Theta(1/\sqrt{n})$ from \cref{e:super4} but also the fast convergence rate $o(1/n)$
(in fact, $\Theta(1/n^{3/2})$) for the discrete velocity and fixed point residual from \cref{BNaccel}. 
In contrast, for the classical (Cesaro equal-weights) mean ergodic theorem,
one computes 
\begin{equation}
\|x_n\|^2\sim \frac{1}{n},
\quad
\|x_n-x_{n+1}\|^2 \sim \frac{1}{n^2},
\quad
\|x_n-Tx_n\|^2 \sim \frac{2}{n^2},
\end{equation}
which yields a slightly faster strong convergence of the iterates but 
a worse convergence rate of $\Theta(1/n)$ for the discrete velocity and 
fixed point residual. 
\end{example}

\subsection*{Comparing $c_{n,k}$ to $b_{n,k}$}

Recall that $\beta=2$ by assumption in this section. 
We now revisit the coefficients $b_{n,k}$ defined by \cref{e:bnk} and
present a formula for them.

\begin{proposition}
We have 
\begin{equation}
\label{e:bnk2}
(\forall\nnn)(\forall k\in\{0,1,\ldots,n\})\quad 
b_{n,k}=\frac{6(k+1)(n-k+1)}{(n+1)(n+2)(n+3)}.
\end{equation}
\end{proposition}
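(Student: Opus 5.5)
We will evaluate the definition \cref{e:bnk'} directly with $\beta=2$, using the Gamma-function representation of the beta function. The plan is to convert every beta value into factorials and then simplify.

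Fix $n\in\NN$ and $k\in\{0,1,\ldots,n\}$. By \cref{f:BG}\cref{f:BG2}\&\cref{f:BG4}, the normalizing constant is
\begin{equation*}
\Bf(2,2)=\frac{\Gamma(2)\Gamma(2)}{\Gamma(4)}=\frac{1\cdot 1}{3!}=\frac{1}{6}.
\end{equation*}
Applying the same two facts to the numerator, with $k+2\geq 2$ and $n-k+2\geq 2$ both integers, gives
\begin{equation*}
\Bf(k+2,n-k+2)=\frac{\Gamma(k+2)\Gamma(n-k+2)}{\Gamma(n+4)}=\frac{(k+1)!\,(n-k+1)!}{(n+3)!}.
\end{equation*}

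Next we multiply by $\binom{n}{k}=n!/(k!(n-k)!)$. The factorials cancel as $(k+1)!/k!=k+1$, $(n-k+1)!/(n-k)!=n-k+1$, and $n!/(n+3)!=1/((n+1)(n+2)(n+3))$. This yields
\begin{equation*}
\binom{n}{k}\Bf(k+2,n-k+2)=\frac{(k+1)(n-k+1)}{(n+1)(n+2)(n+3)}.
\end{equation*}
Dividing by $\Bf(2,2)=1/6$ produces exactly \cref{e:bnk2}.

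There is no genuine obstacle; the only point needing care is the index bookkeeping in $\Gamma(m+1)=m!$, specifically confirming that $\Gamma(n+4)=(n+3)!$. As a sanity check, we will confirm the case $n=0$: the formula gives $6\cdot 1\cdot 1/(1\cdot 2\cdot 3)=1$, which agrees with \cref{l:b1}.
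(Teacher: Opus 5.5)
Your proposal is correct and follows the paper's proof essentially line for line. You compute $\Bf(2,2)=1/6$ and $\Bf(k+2,n-k+2)=(k+1)!\,(n-k+1)!/(n+3)!$ via \cref{f:BG}\cref{f:BG2}\&\cref{f:BG4}, then cancel factorials against $\binom{n}{k}$, exactly as the paper does.
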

\begin{proof}
It follows from \cref{f:BG}\cref{f:BG2}\&\cref{f:BG4} that
\begin{equation}
\label{e:B22}
\Bf(2,2)=\frac{\Gamma(2)\Gamma(2)}{\Gamma(4)}=
\frac{1!1!}{3!}=\frac16
\end{equation}
and
\begin{equation*}
\Bf(k+2,n-k+2)=\frac{\Gamma(k+2)\Gamma(n-k+2)}{\Gamma(n+4)}
=\frac{(k+1)!(n-k+1)!}{(n+3)!}. 
\end{equation*}
Hence 
\begin{align*}
b_{n,k}
&=\binom{n}{k}\frac{\Bf(k+2,n-k+2)}{\Bf(2,2)}
=6\binom{n}{k}\frac{(k+1)!(n-k+1)!}{(n+3)!}
=6\frac{n!}{k!(n-k)!}\frac{(k+1)!(n-k+1)!}{(n+3)!}
\\
&=\frac{6(k+1)(n-k+1)}{(n+1)(n+2)(n+3)},
\end{align*}
as claimed.
\end{proof}

\begin{proposition}[Comparing $c_{n,k}$ to $b_{n,k}$]
For every $n\in \NN$ and 
$k\in \NN$, set 
\begin{equation}
\delta_{n,k} := c_{n,k}-b_{n,k}.
\end{equation}
Then 
\begin{equation}
(\forall n\in \NN)(\forall k\in \NN)\quad \delta_{n,k} = 
\begin{cases}
\displaystyle  \frac{6n}{(n+2)(n+3)(n+4)}, & \text{if $k=0$;}\\[+5mm]
\displaystyle \frac{2\bigl(9k^2-(10n+1)k+2n^2-2n-4\bigr)}{(n+1)(n+2)(n+3)(n+4)}, 
& \text{if $1\leq k\leq n$;}\\[+5mm]
0, &\text{if $n+1\leq k$.}
\end{cases}
\end{equation}
Moreover, 
\begin{equation}
\label{e:kabeljau5}
\max_{k\in \NN} |\delta_{n,k}| = \delta_{n,0} = \frac{6n}{(n+2)(n+3)(n+4)}.
\end{equation}
\end{proposition}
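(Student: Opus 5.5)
The plan is to obtain the formula for $\delta_{n,k}$ by direct subtraction, and then to bound $|\delta_{n,k}|$ for $1\le k\le n$ by using the convexity in $k$ of the numerator of $\delta_{n,k}$.

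\emph{Step~1: the formula.} The case $k\ge n+1$ is immediate, since both $c_{n,k}$ and $b_{n,k}$ vanish there. For $k=0$ we use \cref{e:260402a} and \cref{e:bnk2}:
\begin{equation*}
\delta_{n,0}=\frac{12}{(n+3)(n+4)}-\frac{6}{(n+2)(n+3)}
=\frac{12(n+2)-6(n+4)}{(n+2)(n+3)(n+4)}
=\frac{6n}{(n+2)(n+3)(n+4)}.
\end{equation*}
This also covers $n=0$, where $c_{0,0}=b_{0,0}=1$. For $1\le k\le n$ we put both terms over the common denominator $(n+1)(n+2)(n+3)(n+4)$. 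The numerator becomes
\begin{equation*}
2(n+1)\bigl(-3k^2+(3n-1)k+5n+8\bigr)-6(n+4)(k+1)(n-k+1).
\end{equation*}
Expanding it is routine polynomial algebra, and it should collapse to $2g_n(k)$, where
\begin{equation*}
g_n(k):=9k^2-(10n+1)k+2n^2-2n-4.
\end{equation*}

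\emph{Step~2: the maximum.} Since $\delta_{n,0}\ge 0$, it suffices to show $|\delta_{n,k}|\le\delta_{n,0}$ for $1\le k\le n$. Multiplying through by the common denominator, this is equivalent to
\begin{equation*}
|2g_n(k)|\le 6n(n+1).
\end{equation*}
The function $g_n$ is a convex quadratic in $k$, which makes both directions manageable.

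For the upper bound, convexity gives $\max_{1\le k\le n} g_n(k)=\max\{g_n(1),g_n(n)\}$. Here
\begin{equation*}
g_n(1)=2n^2-12n+4
\quad\text{and}\quad
g_n(n)=n^2-3n-4.
\end{equation*}
Both satisfy $2g\le 6n^2+6n$ trivially.

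For the lower bound, we use the minimum of $g_n$ over all real $k$, attained at the vertex $k=(10n+1)/18$. Its value is
\begin{equation*}
g_n\Bigl(\frac{10n+1}{18}\Bigr)=2n^2-2n-4-\frac{(10n+1)^2}{36}=-\frac{28n^2+92n+145}{36}.
\end{equation*}
So we need $28n^2+92n+145\le 108n^2+108n$, that is, $80n^2+16n-145\ge0$, which holds for all $n\ge2$.

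The main obstacle is the small case $n=1$, where the vertex bound fails. There we check directly: only $k=1$ occurs, $g_1(1)=-6$, and $|2g_1(1)|=12=6\cdot1\cdot2$. So equality holds, which is still consistent with \cref{e:kabeljau5}. The remaining risk is an arithmetic slip in the Step~1 expansion, so I would double-check that identity, for instance against the tabulated values in \cref{e:pnalpha4} for small $n$.
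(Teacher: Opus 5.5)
Your proposal is correct and follows the paper's proof essentially step for step. You get the formula by direct subtraction, bound the convex quadratic $g_n$ above by its endpoint values $g_n(1),g_n(n)$ and below by its vertex value $-(28n^2+92n+145)/36$ (which suffices for $n\ge 2$), and check $n=1$ directly, where $|\delta_{1,1}|=\tfrac{1}{10}=\delta_{1,0}$; the expansion you flagged as a risk does collapse to $2g_n(k)$, so there is no arithmetic slip.
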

\begin{proof}
When $n+1\leq k$, then
it is clear that $\delta_{n,k}=0-0=0$.
Using \cref{e:260402a} and \cref{e:bnk2}, we have 
\begin{equation*}
\delta_{n,0}=\frac{12}{(n+3)(n+4)}-\frac{6}{(n+2)(n+3)}
=\frac{6n}{(n+2)(n+3)(n+4)},
\end{equation*}
and, for $k\in \{1,\ldots,n\}$,
\begin{align*}
\delta_{n,k} &= 
\frac{2\bigl(-3k^2+(3n-1)k+5n+8\bigr)}{(n+2)(n+3)(n+4)}
-
\frac{6(k+1)(n-k+1)}{(n+1)(n+2)(n+3)}\\
&=
\frac{2\bigl(9k^2-(10n+1)k+2n^2-2n-4\bigr)}{(n+1)(n+2)(n+3)(n+4)}.
\end{align*}
Fix $k\in\{1,\ldots,n\}$. 
If $n=1$, then $k=1$ and hence 
$\delta_{1,1} = -1/10$ and so 
$|\delta_{1,1}| = 1/10 = \delta_{1,0}$.
Thus, concerning the ``Moreover'' part, we may and do
assume that $n\geq 2$, and then it suffices to show that 
\begin{equation}
\left|9k^2-(10n+1)k+2n^2-2n-4\right|< 3n(n+1).
\end{equation}
Define 
\begin{equation}
f(x) := 9x^2-(10n+1)x+2n^2-2n-4,
\end{equation}
which is a convex quadratic function. 
Hence 
\begin{equation}
\label{e:kjau1}
f(k) \leq \max\{f(1),f(n)\} = 
\max\{2(n^2-6n+2),n^2-3n-4\} < 3n(n+1).
\end{equation}
The minimizer of $f$ is 
$(10n+1)/(18)$ and the minimum value is
\begin{equation}
\min f(\RR) = f\big((10n+1)/18\big) = - \frac{28n^2+92n+145}{36}
\end{equation}
It follows that 
\begin{equation}
f(k)+3n(n+1) \geq \min f(\RR) +3n(n+1) = \frac{80n^2+16n-145}{36} > 0
\end{equation}
because $n\geq 2$. 
Hence 
\begin{equation}
\label{e:kjau2}
-f(k)< 3n(n+1).
\end{equation}
Combining \cref{e:kjau1} and \cref{e:kjau2}, we obtain
$|f(k)|< 3n(n+1)$, and we're done. 
\end{proof}

\begin{remark}[Lorentz condition of $(c_{n,k})$ via $(b_{n,k})$]
\label{r:Lcviab}
We've used \cref{c:unimod} to verify the Lorentz condition of $(c_{n,k})$ 
in the proof of \cref{t:super4}.
An alternative route to this result via $(b_{n,k})$ is as follows:
Using \cref{e:kabeljau5}, \cref{l:b4} and \cref{e:B22}, we have 
\begin{align*}
\sum_{k=0}^\infty |c_{n,k}-c_{n,k+1}|
&\leq 
\sum_{k=0}^\infty |c_{n,k}-b_{n,k}| + \sum_{k=0}^\infty |b_{n,k}-b_{n,k+1}| + \sum_{k=0}^\infty |b_{n,k+1}-c_{n,k+1}|\\
&\leq 
2\sum_{k=0}^\infty |c_{n,k}-b_{n,k}| + \sum_{k=0}^\infty |b_{n,k}-b_{n,k+1}|\\
&\leq
\frac{12n(n+1)}{(n+2)(n+3)(n+4)} + \frac{4^{1-2}}{\Bf(2,2)}\cdot \frac{2}{n+1}\\
&=
\frac{12n(n+1)}{(n+2)(n+3)(n+4)} + \frac{3}{n+1}\\
&\to 0 \quad \text{as $n\to\infty$.}
\end{align*}
\end{remark}

\begin{remark}[visualizations]
We now illustrate some of the results in this section. 
In Figure~\ref{fig:panel}, 
we show the distributions for various values of $n$. 
It is clear that the beta-binomial dominates \BN\ in the center, 
that the \BN\ is asymmetric, and that the distributions get closer
as $n$ is increased. The difference between the distributions, 
for various value of $n$, is visualized in a rescaled heatmap in 
Figure~\ref{fig:heat}. 
It follows from \cref{e:kabeljau5} that 
$\sum_{k=0}^n|c_{n,k}-b_{n,k}|$ is $O(1/n)$. 
In fact, Figure~\ref{fig:loglog} suggests it is $\Theta(1/n)$.
\end{remark}

\begin{figure}[tbp]
  \centering
  \includegraphics[width=0.95\textwidth]{figures/section5_panel.tikz}
  \caption{Comparison of \(c_{n,k}\) and \(b_{n,k}\) for selected values of \(n\).}
\label{fig:panel}
\end{figure}

\begin{figure}[tbp]
\centering
\begin{tikzpicture}
\begin{axis}[
    width=0.88\textwidth,
    height=0.58\textwidth,
    scale only axis,
    xmin=0, xmax=1,
    ymin=1, ymax=220,
    enlargelimits=false,
    axis on top,
    xlabel={$k/n$},
    ylabel={$n$},
    xtick={0,0.2,0.4,0.6,0.8,1},
    ytick={1,50,100,150,200},
    tick label style={font=\small},
    label style={font=\normalsize},
    xticklabel style={/pgf/number format/fixed},
    yticklabel style={/pgf/number format/fixed},
    colorbar,
    point meta min=-5.8,
    point meta max=5.8,
    colormap={bwr}{
        rgb255(0cm)=(0,0,255);
        rgb255(1cm)=(255,255,255);
        rgb255(2cm)=(255,0,0)
    },
    colorbar style={
        ytick={-5.8,0,5.8},
        yticklabel style={/pgf/number format/fixed},
        tick label style={font=\small},
        title={$n^2(c_{n,k}-b_{n,k})$},
        title style={font=\small}
    }
]
\addplot graphics[
    xmin=0, xmax=1,
    ymin=1, ymax=220,
    includegraphics={trim={8pt 14pt 50pt 14pt},clip}
] {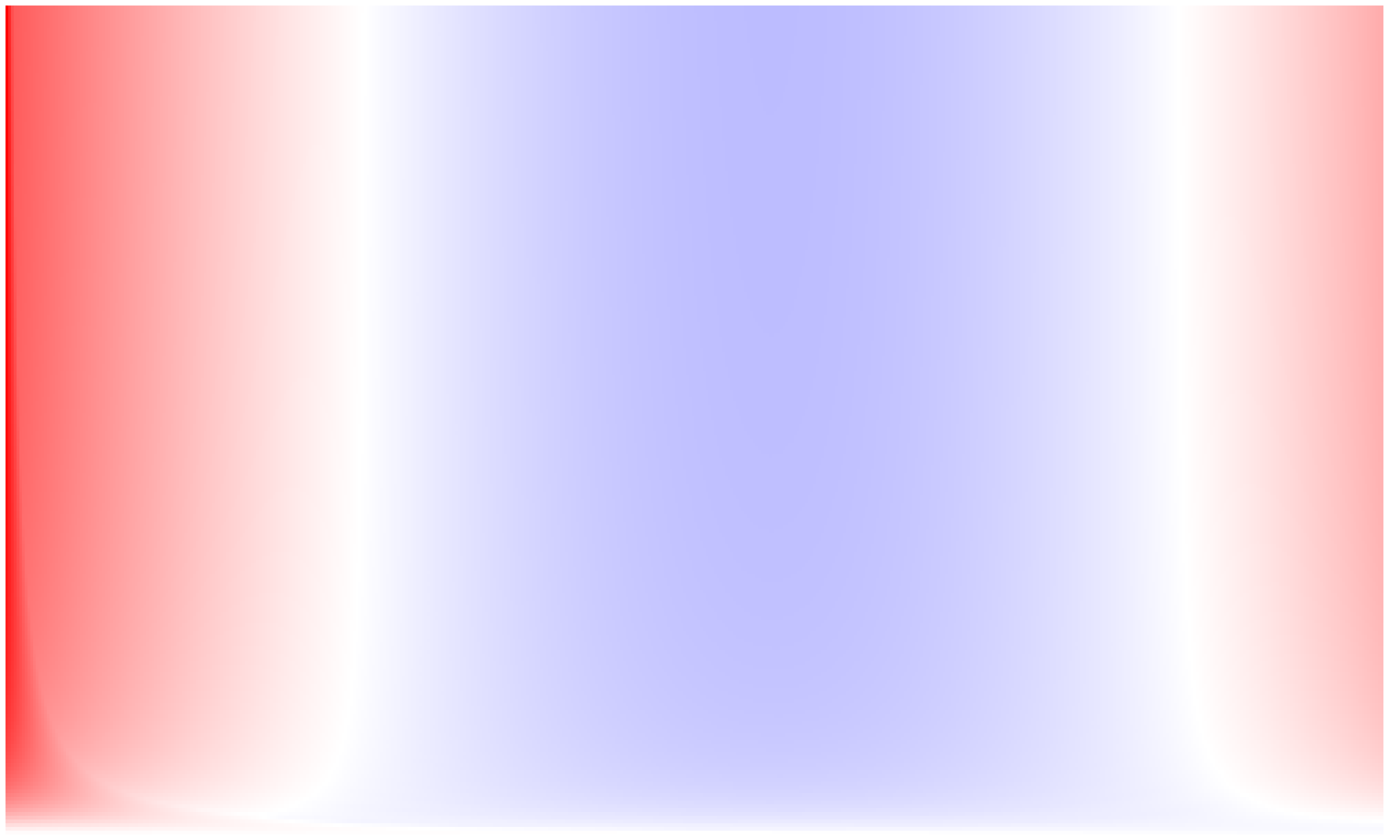};
\end{axis}
\end{tikzpicture}
\caption{Rescaled heatmap of $n^2(c_{n,k}-b_{n,k})$.}
\label{fig:heat}
\end{figure}

\begin{figure}[tbp]
  \centering
  \includegraphics[width=0.80\textwidth]{figures/section5_tv_loglog.tikz}
  \caption{Log-log plot of $\sum_{k=0}^n|c_{n,k}-b_{n,k}|$.}
  \label{fig:loglog}
\end{figure}

\section{Beyond $\alpha=4$}

\label{s:beyond4} 

We conclude this paper with some observations regarding the general case when 
$\alpha>2$, conjectures, and a direction for future research.

We start with the following: 

\begin{conjecture}
\label{c:allalpha}
\cref{t:super4} holds true for every real number $\alpha>2$. 
\end{conjecture}

In \cref{r:Lcviab}, we have derived the Lorentz condition of $(c_{n,k})$ via $(b_{n,k})$ when $\alpha=4$.
On the other hand, by \cref{l:b4}, the Lorentz condition of $(b_{n,k})$ holds for every $\beta>1$.
Thus, if we can 
guarantee that $\max_{k\in \{0,1,\ldots,n\}}
|c_{n,k}-b_{n,k}|$ decays to $0$ sufficiently fast 
as $n\to\infty$, then the
Lorentz condition of $(c_{n,k})$ would follow immediately from that of
$(b_{n,k})$. Motivated by \cref{p:qrecursion}, we thus formulate another conjecture:

\begin{conjecture}
\label{c:close}
If $\beta = \alpha/2$, then 
$n \max_{k\in \{0,1,\ldots,n\}} |c_{n,k}-b_{n,k}| \to 0$ as $n\to\infty$. 
\end{conjecture}

While the coefficients $c_{n,k}$ share properties with $b_{n,k}$, 
as we have seen in the previous section, 
they are more complicated and do not have a closed-form formula as $b_{n,k}$ does.
Even unimodality, which is important for analyzing the Lorentz condition, 
turns out to be a much more subtle issue. 
To see this, we present the following formula which complements 
\cref{e:cnn1}:

\begin{proposition}
\label{p:jupp}
For a general $\alpha>2$, the coefficients $c_{n,k}$ from \cref{f:2.2} satisfy
\begin{equation}
\label{e:jupp1}
(\forall n\geq 1)\quad c_{n,1} = 
\frac{\alpha(\alpha n+2)}{(\alpha+2)(\alpha+2n)}c_{n,0}. 
\end{equation}
\end{proposition}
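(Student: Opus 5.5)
The plan is to work with the normalized ratios $r_{n,k}=c_{n,k}/c_{n,0}$ from Step~2 of the proof of \cref{p:3.1}. These are well defined because $c_{n,0}>0$ by \cref{e:260325a}, and \cref{e:jupp1} is exactly the statement
\begin{equation*}
r_{n,1}=\frac{\alpha(\alpha n+2)}{(\alpha+2)(\alpha+2n)} \quad\text{for all } n\geq 1.
\end{equation*}

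Specialize \cref{e:r iterate} (valid for $n\geq 2$) to $k=1$. Since $r_{m,0}=1$ for all $m$, the term $r_{n-1,0}-r_{n-2,0}$ vanishes and we get the explicit increment
\begin{equation*}
r_{n,1}-r_{n-1,1}=\frac{\alpha(\alpha-2)}{(2n+\alpha)(2n+\alpha-2)}
=\frac{\alpha(\alpha-2)}{2}\Big(\frac{1}{2n+\alpha-2}-\frac{1}{2n+\alpha}\Big).
\end{equation*}
For the base case, \cref{e:cnk01} gives $r_{1,1}=c_{1,1}/c_{1,0}=\alpha/(\alpha+2)$. This agrees with the claimed formula at $n=1$, which is also covered by \cref{e:cnn1}.

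Next, sum the increments from $2$ to $N$. The partial fractions telescope, since $2(n-1)+\alpha=2n+\alpha-2$, giving
\begin{equation*}
r_{N,1}=\frac{\alpha}{\alpha+2}+\frac{\alpha(\alpha-2)}{2}\Big(\frac{1}{\alpha+2}-\frac{1}{2N+\alpha}\Big)
=\frac{\alpha}{\alpha+2}\Big(1+\frac{(\alpha-2)(N-1)}{2N+\alpha}\Big).
\end{equation*}
The bracket simplifies to $(\alpha N+2)/(2N+\alpha)$, which is the claimed formula. Multiplying back by $c_{N,0}$ yields \cref{e:jupp1}.

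An equivalent route is induction on $n$: check that $f(n)-f(n-1)$ equals the increment above, where $f$ denotes the right-hand side of the display for $r_{n,1}$.

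There is no real obstacle. The only points needing care are that the recursion for $r_{n,k}$ requires $n\geq 2$, so $n=1$ must be handled separately as above, and the routine partial-fraction and telescoping algebra.
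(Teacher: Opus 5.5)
Your proof is correct and is essentially the paper's argument. The paper's induction step substitutes $c_{n-1,0}=c_{n+1,0}/(A_{n+1}A_n)$ and thereby reduces to exactly your increment $c_{n+1,1}/c_{n+1,0}-c_{n,1}/c_{n,0}=1-B_{n+1}/(A_{n+1}A_n)=\alpha(\alpha-2)/\bigl((2n+\alpha)(2n+\alpha+2)\bigr)$; the only difference is that the paper verifies the closed form by induction with the algebra left implicit, whereas you reuse the ratio recursion from Step~2 of the proof of \cref{p:3.1} and telescope, which derives the formula rather than merely checking it.
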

\begin{proof}
We abbreviate 
\begin{equation}
\label{e:jupp4}
A_n := \frac{2n+\alpha}{2(n+\alpha)} 
\quad\text{and}\quad
B_n := \frac{n}{n+\alpha},
\end{equation}
and prove the result by induction on $n\geq 1$.

Using \cref{e:cnk01}, we have
\begin{align}
c_{1,1} &= \frac{\alpha}{2(1+\alpha)}
= \frac{\alpha(\alpha+2)}{(\alpha+2)(\alpha+2)}\cdot \frac{2+\alpha}{2(1+\alpha)}
= \frac{\alpha(\alpha\cdot 1+2)}{(\alpha+2)(\alpha+2\cdot 1)}c_{1,0},
\end{align}
which verifies the base case. 

Now assume that \cref{e:jupp1} holds for some $n\geq 1$.
Using \cref{e:cnk} and the inductive hypothesis, we obtain
\begin{subequations}
\label{e:jupp2}
\begin{align}
c_{n+1,1}
&=A_{n+1}\bigl(c_{n,1}+c_{n,0}\bigr)-B_{n+1}c_{n-1,0}\\
&=
A_{n+1}c_{n,0}\left(1+\frac{\alpha(\alpha n+2)}{(\alpha+2)(\alpha+2n)}\right)-B_{n+1}c_{n-1,0}.
\end{align}
\end{subequations}
Using \cref{e:rcn0}, we also have 
$c_{n+1,0} = A_{n+1}c_{n,0}$ and $c_{n,0} = A_{n}c_{n-1,0}$;
thus, $c_{n-1,0} = c_{n+1,0}/(A_{n+1}A_n)$.
We now substitute these expressions into \cref{e:jupp2} and get 
\begin{subequations}
\begin{align}
c_{n+1,1} &= c_{n+1,0}\left(
1+\frac{\alpha(\alpha n+2)}{(\alpha+2)(\alpha+2n)}-\frac{B_{n+1}}{A_{n+1}A_n}
\right)\\
&= c_{n+1,0}\cdot \frac{\alpha(\alpha(n+1)+2)}{(\alpha+2)(\alpha+2(n+1))},
\label{e:jupp3b}
\end{align}
\end{subequations}
where \cref{e:jupp3b} follows from \cref{e:jupp4} and some algebraic manipulations.
\end{proof}

\begin{remark}[$\alpha = 1+\sqrt5$ is a critical parameter] 
Let $n\geq 1$. 
\cref{p:jupp} implies that 
\begin{equation}
c_{n,1}-c_{n,0}
=
c_{n,0}\,
\frac{n(\alpha^2-2\alpha-4)-\alpha^2}{(\alpha+2)(\alpha+2n)}.
\end{equation}
If $\alpha\leq 1+\sqrt5$, then
$\alpha^2-2\alpha-4\leq 0$, and hence 
$c_{n,1}-c_{n,0}<0$. 
If $1+\sqrt5 <\alpha$, then $c_{n,1}-c_{n,0} = 0$ provided that 
$n=\alpha^2/(\alpha^2-2\alpha-4)\in\{1,2,\ldots\}\subseteq\NN$. 
We obtain the following dichotomy: 

\hspace{1em}\makebox[1.2cm][l]{\rm\textbf{Either}}\quad
$\alpha\leq 1+\sqrt{5}$ and $c_{n,1}-c_{n,0}<0$;

\hspace{1em}\makebox[1.2cm][l]{\rm\textbf{Or}}\quad
$1+\sqrt5 <\alpha$, $N := \alpha^2/(\alpha^2-2\alpha-4)$, 
 and 
$c_{n,1}-c_{n,0} \begin{cases}
< 0, &\text{if $1\leq n<N$;}\\
=0, &\text{if $n=N\in\NN$;}\\
>0, &\text{if $N<n$.}
\end{cases}
$
\par 
\noindent
So for small $\alpha$  
or sufficiently small $n$, 
$(c_{n,k})_{k\in\{0,1,\ldots,n\}}$ is not unimodal!
Note that if $\alpha=4$, then $N = 4$ and hence 
$c_{n,0}>c_{n,1}$ for $n\leq 3$,
$c_{n,0}=c_{n,1}$ for $n=4$, and
$c_{n,0}<c_{n,1}$ for $n\geq 5$, 
which is consistent with \cref{p:unimod}.
\end{remark}






Motivated by numerical experiments, we formulate the following: 

\begin{conjecture}
No matter how $\alpha>2$ is chosen, 
the sequence $(c_{n,k})_{k\in\{1,2,\ldots,n\}}$ is unimodal. 
\end{conjecture}

The following remark was obtained with the help of \texttt{ChatGPT}:

\begin{remark}[why $\alpha=4$ is easier than other choices]
We have seen in \cref{e:260325a} that 
\begin{equation}
(\forall n\geq 1)\quad
c_{n,0} = \frac{2+\alpha}{2(1+\alpha)}\prod_{m=2}^n\frac{2m+\alpha}{2(m+\alpha)}.
\end{equation}
The remainder of this remark was pointed out to us by 
{\rm \texttt{ChatGPT}}: First, we can rewrite the product in
terms of the gamma function as 
\begin{align*}
(\forall n\geq 1)\quad
c_{n,0}
&= 
\frac{\alpha+2}{2(\alpha+1)}
\prod_{m=2}^n\frac{m+\alpha/2}{m+\alpha}\\
&= 
\frac{\alpha+2}{2(\alpha+1)}
\frac{\prod_{m=2}^n (m+\alpha/2)}{\prod_{m=2}^n (m+\alpha)}
\\
&= 
\frac{\alpha+2}{2(\alpha+1)}
\frac{\Gamma(n+1+\alpha/2)/\Gamma(2+\alpha/2)}{\Gamma(n+1+\alpha)/\Gamma(2+\alpha)}
\tag{by \cref{f:BG}\cref{f:BGnew}}\\
&= \frac{\alpha+2}{2(\alpha+1)} 
\cdot \frac{\Gamma(n+1+\alpha/2)}{\Gamma(n+1+\alpha)}\cdot \frac{\Gamma(2+\alpha)}{\Gamma(2+\alpha/2)}\\
&= \frac{1+\alpha/2}{\alpha+1}
\cdot \frac{\Gamma(n+1+\alpha/2)}{\Gamma(n+1+\alpha)}\cdot 
\frac{(\alpha+1)\Gamma(\alpha+1)}{(1+\alpha/2)\Gamma(1+\alpha/2)}
\tag{by \cref{f:BG}\cref{f:BG3}}
\\
&= \frac{\Gamma(n+1+\alpha/2)}{\Gamma(n+1+\alpha)}\cdot 
\frac{\Gamma(\alpha+1)}{\Gamma(1+\alpha/2)}.
\end{align*}
In view of \cref{f:BG}\cref{f:BG4}, it is clear that 
$c_{n,0}$ has a closed-form formula when $\alpha/2\in\NN$, i.e., 
when $\alpha$ is even. 
Using \cref{f:BG} and algebraic manipulations, one obtains for $n\geq 1$ 
the formulas 
\begin{align*}
\alpha=3:\quad &c_{n,0}=\frac{(2n+3)!}{4^n (n+1)!(n+3)!},\\[1mm]
\alpha=4:\quad &c_{n,0}=\frac{12}{(n+3)(n+4)},\\[1mm]
\alpha=5:\quad &c_{n,0}=\frac{(2n+6)!}{4^n (n+3)!(n+5)!},\\[1mm]
\alpha=6:\quad &c_{n,0}=\frac{120}{(n+4)(n+5)(n+6)},\\[1mm]
\alpha=7:\quad &c_{n,0}=\frac{3(2n+8)!}{4^n (n+4)!(n+7)!},\\[1mm]
\alpha=8:\quad &c_{n,0}=\frac{1680}{(n+5)(n+6)(n+7)(n+8)}.
\end{align*}
This shows that only even integer values for $\alpha$ will yield 
``pleasant'' closed-form formulas for $c_{n,0}$, with 
$\alpha=4$ being the simplest among them. 
While there is some hope to derive closed-form versions of $c_{n,k}$ 
at least for even integer values of $\alpha$, the complexity of 
$c_{n,0}$ suggests that the analysis is likely to be much more involved than
the analysis for $\alpha=4$ in \cref{s:alpha4}. 
\end{remark}

\begin{remark}[future work]
In this paper, we have analyzed a special case of the \BN\ iteration. 
In their original paper \cite{BN}, they study an even more general 
iteration, where $x_1$ need not be equal to $x_0$ and the actual iteration 
features another parameter and an additional difference of vectors. 
It would be interesting to see if the result in the present paper 
can be extended to their more general iteration.
\end{remark}

\section*{Acknowledgments}
The authors thank the editor and the reviewer
for constructive comments which significantly
improved the manuscript.
The research of HHB was partially supported by a Discovery Grant 
of the Natural Sciences and Engineering Research Council of
Canada. 

\section*{Data availability}
We do not analyze or generate any datasets, because our work
proceeds within a theoretical and mathematical approach. 
Relevant materials can be found in the list of references below.

\section*{Competing interests}
The authors have no competing interests to declare that are relevant to the content of this article.

\end{document}